\documentclass[11pt]{amsart}
\usepackage[T1]{fontenc}
\usepackage{lmodern}
\usepackage[margin=1in]{geometry}
\usepackage[nopatch=item]{microtype}
\usepackage[hidelinks]{hyperref}
\usepackage{needspace}
\numberwithin{equation}{section}
\newtheorem{theorem}{Theorem}[section]
\newtheorem{proposition}[theorem]{Proposition}
\newtheorem{lemma}[theorem]{Lemma}
\newtheorem{corollary}[theorem]{Corollary}
\theoremstyle{remark}
\newtheorem{remark}[theorem]{Remark}

\DeclareMathOperator{\tr}{tr}
\DeclareMathOperator{\divg}{div}
\DeclareMathOperator{\Hess}{Hess}
\DeclareMathOperator{\id}{id}
\DeclareMathOperator{\ran}{ran}
\newcommand{\gb}{\bar g}
\newcommand{\Lie}{\mathcal L}
\newcommand{\dd}{\,d}
\newcommand{\Sph}{\mathbb S}
\newcommand{\R}{\mathbb R}
\newcommand{\norm}[1]{\lVert #1\rVert}
\newcommand{\abs}[1]{\lvert #1\rvert}
\newcommand{\ip}[2]{\langle #1,#2\rangle}
\newcommand{\DR}{D R_{\gb}}
\newcommand{\DHg}{D H_{\gb}}
\newcommand{\DRs}{D R_{\gb}^{*}}
\allowdisplaybreaks[2]
\title[Scalar-curvature rigidity of spherical caps]{Sharp Local Scalar-Curvature
Rigidity of Spherical Caps}
\author{Liuwei Gong}
\address{Department of Mathematics, The Chinese University of Hong Kong,
Shatin, NT, Hong Kong}
\email{lwgong@math.cuhk.edu.hk}
\author{Juncheng Wei}
\address{Department of Mathematics, The Chinese University of Hong Kong,
Shatin, NT, Hong Kong}
\email{wei@math.cuhk.edu.hk}
\date{}
\subjclass[2020]{Primary 53C21; Secondary 53C24, 58J05}
\keywords{Scalar curvature rigidity, spherical caps, manifolds with boundary,
static metrics}
\hypersetup{
pdftitle={Sharp Local Scalar-Curvature Rigidity of Spherical Caps},
pdfauthor={Liuwei Gong and Juncheng Wei},
pdfsubject={Local scalar-curvature rigidity of proper spherical caps}
}
\begin{document}
\begin{abstract}
Let $B(\delta)\subset\mathbb S^n$ be a geodesic ball with the round metric
$\bar{g}$ of sectional curvature one, where $n\geq3$ and
$0<\delta<\pi/2$. We prove local scalar-curvature rigidity under preservation
of the induced boundary metric and a one-sided lower bound for the boundary
mean curvature. More precisely, every sufficiently small $C^{2,\alpha}$
perturbation $g$ of $\bar{g}$ satisfying
$R(g)\geq n(n-1)$ and $H(g)\geq(n-1)\cot\delta$ is round up to a
diffeomorphism fixing the boundary pointwise.

Inspired by the scalar--vector--tensor decomposition used in the study
of Yamabe compactness, we establish positivity of the Brendle--Marques
quadratic form in the divergence-free gauge for variations preserving the induced boundary
metric and satisfying the joint linearized constraints
$D R_{\bar{g}}(h)=D H_{\bar{g}}(h)=0$. The hemisphere counterexamples of
Brendle--Marques--Neves show that the radius range is sharp.
\end{abstract}
\maketitle

\section{Introduction}
Scalar-curvature rigidity asks whether a lower bound for scalar curvature,
together with prescribed boundary geometry, determines a Riemannian
metric. For a round geodesic ball in the unit sphere, the natural boundary
data are the induced metric and the mean curvature. Min-Oo's conjecture for the hemisphere
\cite{minoo} was disproved by Brendle, Marques, and Neves \cite{bmn}.
For sufficiently small balls, local rigidity under a lower bound for the
boundary mean curvature was proved by Brendle and Marques \cite{bm}
and extended to a larger range of radii by Cox, Miao, and Tam \cite{cmt}.

Let $\gb$ denote the metric of sectional curvature one on $\Sph^n$, and set
\[
\Omega=B(\delta)=\{x\in\Sph^n:x_{n+1}\geq\cos\delta\},
\qquad \Sigma=\partial B(\delta).
\]
We call $B(\delta)$ a \emph{proper cap} when $0<\delta<\pi/2$.
Let $r$ be distance from its center and put $\lambda=x_{n+1}=\cos r$
and $c=\cos\delta$. Write $\DR$ and $\DHg$ for the linearizations
of scalar and mean curvature at $\gb$.
For a metric $g$ on $\Omega$, write $g^T$ for its induced metric on $\Sigma$
and $H(g)$ for the outward mean curvature. The problem considered here is
whether
\begin{equation}\label{eq:admissible}
R(g)\geq n(n-1)\quad\hbox{in }\Omega,
\qquad g^T=\gb^T,
\qquad H(g)\geq(n-1)\cot\delta\quad\hbox{on }\Sigma
\end{equation}
forces $g$ to be round when $g$ is close to $\gb$. We call a metric
satisfying \eqref{eq:admissible} \emph{admissible}. All curvature
inequalities are pointwise. A diffeomorphism is \emph{boundary-fixing}
if its restriction to $\Sigma$ is the identity.

Brendle and Marques \cite{bm} proved this local rigidity statement under the
radius restriction
\begin{equation}\label{eq:bm-range}
\cos\delta\geq\frac{2}{\sqrt{n+3}}.
\end{equation}
Their argument first uses a boundary-fixing diffeomorphism to make the
metric perturbation divergence-free. It then estimates a quadratic form
obtained by integrating the scalar-curvature defect against the static
potential $\cos r$. However, the boundary coefficients of this form can be negative when \eqref{eq:bm-range} fails. Cox, Miao, and Tam
\cite{cmt} improved the admissible radius range by controlling these terms
with additional integral and spectral estimates. The question of the
largest locally rigid spherical ball was also raised in Cox's dissertation
\cite{cox}. Our main result covers the full range $0<\delta<\pi/2$.

\begin{theorem}\label{thm:main}
Let $n\geq3$, $0<\delta<\pi/2$, and $0<\alpha<1$. There is
$\varepsilon=\varepsilon(n,\delta,\alpha)>0$ with the following property.
If $g$ is a $C^{2,\alpha}$ metric on $B(\delta)$ satisfying
\eqref{eq:admissible} and
\[
\norm{g-\gb}_{C^{2,\alpha}(B(\delta),\gb)}<\varepsilon,
\]
then there is a $C^{3,\alpha}$ diffeomorphism $\phi:B(\delta)\to B(\delta)$
such that
\[
\phi|_\Sigma=\id,\qquad \phi^*g=\gb.
\]
\end{theorem}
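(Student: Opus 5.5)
We follow the Brendle--Marques scheme and replace only the coercivity step. First we fix the gauge. By an implicit-function argument on the map $\phi\mapsto \divg_{\gb}(\phi^*g-\gb)$, restricted to $C^{3,\alpha}$ diffeomorphisms fixing $\Sigma$ pointwise, we find such a $\phi$ with $\norm{\phi^*g-\gb}_{C^{2,\alpha}}\le C\varepsilon$. The linearization is $X\mapsto \divg_{\gb}(\Lie_X\gb)$ with Dirichlet condition $X|_\Sigma=0$; it is invertible because $\int\abs{\Lie_X\gb}^2$ controls $\norm{X}_{H^1}$ when $X$ vanishes on $\Sigma$, since there are no Killing fields vanishing on an open piece of the boundary.

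We then replace $g$ by $\phi^*g$. Since $\phi|_\Sigma=\id$, this preserves the admissibility conditions \eqref{eq:admissible}. Set $h=g-\gb$, so $\divg_{\gb}h=0$, $h^T=0$ on $\Sigma$, and $\norm{h}_{C^{2,\alpha}}$ is small.

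Next we use the static potential $\lambda=\cos r$, which satisfies $\DRs(\lambda)=0$. Integrate $(R(g)-n(n-1))\lambda\ge0$ over $\Omega$ and $(H(g)-(n-1)\cot\delta)\,\lambda$ over $\Sigma$; note $\lambda=c>0$ on $\Sigma$, because $\delta<\pi/2$. Expanding both to second order in $h$ gives two facts.
\begin{itemize}
\item The first-order terms cancel by the Green identity for $\DR$ against $\lambda$ together with $h^T=0$.
\item The remaining terms give
\[
0\le \int_\Omega \lambda\,(R(g)-n(n-1)) + 2\int_\Sigma \lambda\,(H(g)-(n-1)\cot\delta) = -\mathcal Q(h)+O\bigl(\norm{h}_{C^{2,\alpha}}\norm{h}_{H^1}^2\bigr),
\]
where $\mathcal Q$ is the Brendle--Marques quadratic form in the divergence-free gauge.
\end{itemize}

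The core of the proof is a coercivity estimate,
\[
\mathcal Q(h)\ge \kappa\norm{h}_{H^1}^2 \quad \text{for all } h \text{ with } \divg h=0,\ h^T=0,\ \DR(h)=0,\ \DHg(h)=0.
\]
From it we want $\mathcal Q(h)\ge \kappa\norm{h}^2_{H^1}-C(\norm{\DR(h)}^2+\norm{\DHg(h)}^2)$ for general gauge-fixed $h$. The linearized defects are themselves controlled: $\DR(h)$ and $\DHg(h)$ are nonnegative up to quadratic error, and their $\lambda$-weighted integrals vanish at first order by the identity above. Combining gives $\norm{h}_{H^1}^2\lesssim \norm{h}_{C^{2,\alpha}}\norm{h}^2_{H^1}$, hence $h=0$ for small $\varepsilon$.

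To prove coercivity for every $\delta<\pi/2$ we use a scalar--vector--tensor decomposition, as in the Yamabe compactness analysis. Write $h=\bigl(u\gb+\Hess u\bigr)$-type scalar part $+\ \Lie_Y\gb$-type vector part $+$ transverse-traceless part, adapted to the boundary condition $h^T=0$.
\begin{itemize}
\item The TT part has a positive form by a Bochner/Weitzenböck argument on curvature-one space; its boundary term vanishes because $h^T=0$.
\item The constraint $\DR(h)=0$ reduces the scalar part to an ODE/elliptic problem $\Delta(\tr h)+(n-1)\tr h=\dots$ weighted by $\lambda$. The constraint $\DHg(h)=0$ fixes its Neumann-type boundary datum.
\item The scalar and vector sectors then reduce to Rayleigh quotients on the cap. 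We expand in spherical harmonics on $\Sigma$ and study radial ODEs in $r\in[0,\delta]$, proving positivity via Sturm comparison with $\cos r$ and using $c>0$.
\end{itemize}

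The main obstacle is the boundary terms in the scalar sector. These are exactly what fails in \eqref{eq:bm-range}, and they must now be absorbed using $\DHg(h)=0$ rather than estimated crudely. We would first check the lowest harmonic modes, $\ell=0,1$, explicitly; the $\ell=1$ mode is the one that degenerates as $\delta\to\pi/2$, matching Brendle--Marques--Neves. We would then handle $\ell\ge2$ by monotonicity in $\ell$.
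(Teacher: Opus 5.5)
Your outer framework is sound: the boundary-fixing divergence gauge, the static expansion whose quadratic part is $-Q$, and a contradiction once $Q$ is positive on the linearized-constraint space. The gap is the one step that goes beyond Brendle--Marques and Cox--Miao--Tam, namely positivity of $Q$ on critical tensors for every $\delta<\pi/2$. You do not prove it, and the mechanism you sketch cannot supply it.

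In the divergence-free formula \eqref{eq:Q}, the boundary coefficients of $\int_\Sigma h_{\nu\nu}^2$ and $\int_\Sigma\abs{X_h}^2$ are negative for $c^2<4/(n+3)$ and $c^2<1/(n+1)$, respectively. For a trace-free divergence-free tensor with $h^T=0$, only $h_{\nu\nu}$ vanishes on $\Sigma$. The field $X_h$ need not vanish, and $\DHg(h)=-\frac12\divg_\Sigma X_h=0$ only makes it divergence-free on $\Sigma$. So your claim that the TT part has no boundary term is false. The obstruction is also not confined to the scalar sector, since coexact vector modes carry it too.

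In divergence gauge, $\DHg(h)=0$ reads $\partial_\nu\tau=\divg_\Sigma X_h$. This couples the trace to the mixed components rather than fixing a Neumann datum for a scalar problem. Neither Sturm comparison with $\cos r$ nor monotonicity in $\ell$ absorbs the negative boundary terms while you stay in this gauge. Your expectation about low modes is also misdirected. Critical scalar modes of degree $0$ and $1$ are pure boundary-fixing gauge on every proper cap. What breaks at $\delta=\pi/2$ is the passage from $\DR(h)\geq0$, $\DHg(h)\geq0$ to the linearized constraints, because the boundary weight $c$ in the static identity vanishes; the degree-zero tensor $-2\lambda\gb$ is a witness.

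The missing idea is to use the constraints to leave the divergence gauge. On critical tensors $Q=-\frac12D^2I(\gb)$, where $I$ uses the metric volume form, and this Hessian is unchanged by adding $\Lie_X\gb$ with $X|_\Sigma=0$. In each scalar or vector mode you subtract such a gauge tensor to reach the form $2u\,dr^2+2\,dr\odot v$. An ADM-type computation then gives \eqref{eq:radial-hessian}, and the constraints become $u=-\frac{s^2}{L}(w'+(m+1)\cot r\,w)$ with $w(\delta)=0$. For $\ell\geq2$ the integrand equals two positive terms in $\theta=\lambda s^mw$ plus the exact derivative $\frac{d}{dr}[s^{m+1}(m\lambda^2-1)w^2/L]$, which vanishes at $r=\delta$. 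Vector modes give $\frac14\int_\Omega\lambda\abs{(\Lie_v\gamma_r)^\circ}^2\dd v$.

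Your endgame also needs three repairs.
\begin{itemize}
\item Positivity must hold on the weak space $Z$ of $H^1$ tensors with $\int_\Omega\ip{\DRs\varphi}{h}\dd v=0$ for all smooth $\varphi$, because limits produce such tensors. This requires a mode-by-mode smooth approximation that preserves the constraints and the boundary values. Granting that, your inequality with $L^2$ norms of $\DR(h)$ and $\DHg(h)$ does follow from the G\aa rding inequality by compactness.
\item Your defect bound does not follow from first-order cancellation, which only gives $\norm{R(g)-n(n-1)}_{L^1}\leq C\norm h_{C^2}\norm h_{H^1}$. You need $\norm{R(g)-n(n-1)}_{L^1}+\norm{H(g)-H(\gb)}_{L^1(\Sigma)}\leq C\,D(g)\leq C\norm h_{H^1}^2$ for these nonnegative defects, together with $\norm f_{L^2}^2\leq\norm f_{L^\infty}\norm f_{L^1}$.
\item With boundary weight $2\lambda$ instead of $(2-h_{\nu\nu})\lambda$, your expansion omits the quadratic term $c\int_\Sigma h_{\nu\nu}\DHg(h)$, which is not bounded by $\norm h_{H^1}^2$. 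Use $D$ from \eqref{eq:defect} instead.
\end{itemize}
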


The radius range is sharp: the smooth perturbations in
\cite[Corollary~15]{bmn} show that local rigidity fails at $\delta=\pi/2$.
We distinguish local rigidity from \emph{global rigidity}, which requires every
smooth admissible metric on the cap to be round, without a smallness
assumption. Let $r_{\mathrm{loc}}(n)$ and $r_{\mathrm{glob}}(n)$ be the
suprema of the radii $a\leq\pi/2$ for which the respective rigidity
statement holds for every $0<\delta<a$. Theorem~\ref{thm:main},
\cite[Theorem~7]{bmn}, and \cite[Corollary~1.3]{zhangzhao} give
\[
0<r_{\mathrm{glob}}(n)<r_{\mathrm{loc}}(n)=\frac\pi2.
\]
The positive global lower bound follows from the theorem of Zhang and
Zhao \cite{zhangzhao} for sufficiently small spherical caps. Their result
allows all smooth compact connected oriented fill-ins with the prescribed
boundary metric and curvature lower bounds, without a smallness assumption.
Theorem~\ref{thm:main} treats small perturbations on every proper cap.

The strict global upper bound follows by restricting the metric in
\cite[Theorem~7]{bmn} to proper caps whose boundaries lie in its round
equatorial collar. Sweeney's tunnel construction \cite{sweeneyminoo}
can be carried out in a region of strict scalar-curvature increase,
preserving this collar and producing further global counterexamples.
These collar and tunnel constructions do not yield arbitrarily small
$C^{2,\alpha}$ perturbations on a fixed proper cap; they therefore do not
imply $r_{\mathrm{loc}}(n)<\pi/2$. The exact global radius remains open.

Theorem~\ref{thm:main} allows general metric perturbations subject to
\eqref{eq:admissible} and smallness. Several earlier spherical rigidity
theorems use additional geometric structure. Hang and Wang \cite{hangwang} proved
hemisphere rigidity within the round conformal class, and Kim \cite{kim}
studied conformal scalar-curvature rigidity using a Dirichlet spectral
condition. Spiegel \cite{spiegel} and Barbosa, Cavalcante, and Espinar
\cite{bce} obtained rigidity of spherical caps under local conformal flatness and
umbilic-boundary hypotheses. Miao and Tam \cite{miaotam} proved a local
hemisphere theorem with an additional volume constraint. In dimension
three, Hu, Liu, and Shi \cite{hls} established a global cap theorem under the
extra tensor inequality $g\geq\gb$. Lee and Tam \cite{leetam} proved
rigidity for smooth spherical domains in every dimension under $g\geq\gb$ and equality of the metrics along the boundary,
together with the scalar- and mean-curvature comparisons.

The static structure is also central in more general domains. Qing and
Yuan \cite{qingyuan} proved local rigidity for sufficiently small balls in
vacuum static spaces around points where the static potential is nonzero.
Ho and Huang \cite{hohuang}, and Sheng \cite{shengdeform,shengstatic},
studied the joint prescription of scalar and boundary mean curvature and
the obstructions given by the adjoint of its linearization. Yuan \cite{yuan} related conformal rigidity and
compactly supported deformations to a Dirichlet spectral condition and
Brown--York mass. Ge's conjecture for proper isoparametric tubes
\cite{ge} includes the geodesic-ball problem as its degree-one case. These
works provide a broader geometric setting for the present theorem.

\begin{remark}
Brendle--Marques--Neves noted the relation of their
perturbation argument to Yamabe noncompactness after Theorem~4 of
\cite{bmn}.
The scalar--vector--tensor decomposition used below is motivated by the
study of Pohozaev forms in Yamabe compactness \cite[Appendix~A]{kms},
the boundary Yamabe problem \cite{gkmwblowup} and
\cite[Section~6 and Appendix~A]{corderomisteli},
and the constant $Q$-curvature problems
\cite[Section~6.2 and Appendix~D]{gkwq}.
For $h^T=0$ and $\DR(h)=\DHg(h)=0$, the Brendle--Marques form
$Q$ is the weighted curvature second variation in
\eqref{eq:critical-hessian}. Its relation to the Pohozaev form is
\begin{equation}\label{eq:intro-pohozaev}
\begin{aligned}
\mathcal P(h,h)&=-\frac{(n-2)^2}{2^{n+1}(n-1)}Q(\widehat h),\\
\widehat h&=4U^{4/(n-2)}
\left(h+\frac{4z[h]}{(n-2)U}g_{\mathrm{euc}}\right).
\end{aligned}
\end{equation}
Here $U=(1+|y|^2)^{-(n-2)/2}$, $h$ is the trace-free Euclidean metric variation,
and $z[h]$ is its linear conformal correction. On the right, $Q$ is
evaluated on the hemisphere $\Omega_+=\{x\in\Sph^n:x_n\geq0\}$, with
the same static function $\lambda=x_{n+1}$. The definition of the
Pohozaev form, the smoothness assumptions, and the boundary terms for
a finite radius are given in Appendix~\ref{app:pohozaev}.

Thus both forms come from the same weighted second variation. The Yamabe correction
imposes $\DR(\widehat h)=\DHg(\widehat h)=0$; rigidity also fixes the
induced boundary metric and uses divergence gauge. The domains differ:
$\lambda>0$ on the proper cap, whereas $\lambda$ changes sign on
$\Omega_+$.
\end{remark}

The proof uses the static identity to identify the linearized constraints
on which positivity is needed. If a smooth symmetric tensor $h$ satisfies
$h^T=0$, integration by parts against $\lambda$ gives
\begin{equation}\label{eq:intro-static}
\int_\Omega\lambda\DR(h)\dd v
+2c\int_\Sigma\DHg(h)\dd\sigma=0.
\end{equation}
If a differentiable path of admissible metrics starts at $\gb$, its
initial velocity satisfies $h^T=0$, $\DR(h)\geq0$, and $\DHg(h)\geq0$.
On a proper cap, both weights in \eqref{eq:intro-static} are strictly
positive. The identity therefore forces
\[
\DR(h)=0,\qquad \DHg(h)=0.
\]
We call such a tensor, with $h^T=0$, \emph{critical}. On these tensors,
$Q$ is $-1/2$ times the Hessian of $I$, as in \eqref{eq:critical-hessian},
and is unchanged by
adding Lie derivatives generated by vector fields vanishing on the boundary. The restriction to this joint kernel is the
additional information used to control the boundary terms beyond the
range \eqref{eq:bm-range}.

The key geometric step is a change of gauge adapted to the spherical
harmonic decomposition. In each scalar or vector mode, we subtract
$\Lie_X\gb$, with $X|_\Sigma=0$, to obtain
\[
h-\Lie_X\gb=2u\,dr^2+dr\otimes v+v\otimes dr.
\]
Here $u$ is a function and $v$ is a one-form tangent to the concentric
spheres. The tangential--tangential components vanish on each sphere. This gauge
differs from the divergence-free gauge used in the nonlinear expansion:
the transformed tensor need not be divergence-free. The gauge invariance
of the constrained Hessian justifies this change, which reduces the joint
curvature constraints to a first-order radial relation. For scalar degrees
at least two, one radial function determines the tensor, and
Proposition~\ref{prop:scalar} expresses $Q$ as two positive
integrals. The boundary condition and smoothness at the center eliminate
the endpoint terms. The low scalar degrees and the vector equality cases
are gauge tensors; the tensor modes are strictly positive.

The static weight has boundary value $\cos\delta$, which tends to zero as
$\delta\uparrow\pi/2$. We use compactness on each fixed proper cap,
without requiring a coercivity estimate for the Brendle--Marques form
uniform in the radius. If local rigidity fails, there are admissible
metrics $\gb+h_j$ in divergence gauge, with $h_j\ne0$ and
$h_j\to0$ in $C^{2,\alpha}$. After normalization and passage to a subsequence,
\[
v_j=\frac{h_j}{\norm{h_j}_{H^1}},\qquad
\norm{v_j}_{H^1}=1,\qquad v_j\rightharpoonup v\quad\text{in }H^1.
\]
This normalization supplies no uniform bounds for higher derivatives,
so smoothness of $v$ cannot initially be assumed.

We therefore need positivity for $H^1$ tensors satisfying the constraints
weakly. In each spherical mode, the trace solves a scalar
elliptic equation. Subtracting its Hessian leaves a trace-free,
divergence-free tensor whose radial coefficients can be approximated
smoothly while preserving all constraints and boundary values.
The positive factorizations and their equality cases then pass to the
weak space.
Taylor expansion and the positive static potential show that $v$
satisfies the weak constraints.
The Brendle--Marques expansion and weak positivity force $v$ to
vanish, whereas the G\aa rding inequality rules out convergence to zero.

Section~\ref{sec:prelim} records the conventions and standard analytic
identities. Section~\ref{sec:geometry} computes the Hessian on critical
tensors, including the changes of gauge and their smoothness at the center.
Section~\ref{sec:weak} establishes positivity on the weak constraint space.
Section~\ref{sec:nonlinear} proves Theorem~\ref{thm:main}.
Appendix~\ref{app:pohozaev} proves the comparison with the Pohozaev form,
including the boundary terms at a finite radius.

Unless otherwise specified, traces, divergences, norms, covariant
derivatives, and volume forms are taken with respect to $\gb$.
Our convention is $\Delta=\tr\Hess$, so the spherical Laplacian has
nonpositive eigenvalues. Unless stated otherwise, $n\geq3$ and
$0<\delta<\pi/2$ are fixed throughout the proof.
Constants may depend on $n$, $\delta$, and the regularity exponent.

\medskip
\noindent\textbf{Acknowledgements.}
The authors gratefully acknowledge support from the Research Grants
Council of Hong Kong (RGC) through the project
\emph{On Fujita equation in the critical or supercritical regime}.

ChatGPT and Codex (OpenAI) were used to revise the exposition and LaTeX presentation,
simplify the variational and nonlinear arguments, and check selected
calculations and bibliographic details.
The authors are responsible for the mathematical content and references.

\section{Preliminaries}\label{sec:prelim}

\subsection{Geometry and conventions}
In geodesic polar coordinates,
\[
\gb=dr^2+\sin^2r\,\gamma,\qquad 0\leq r\leq\delta,
\]
where $\gamma$ is the round metric on $\Sph^{n-1}$. We write
$dv=dv_{\gb}$ and $d\sigma=d\sigma_{\gb^T}$. Indices $i,j$ refer to
directions tangent to $\Omega$, and $a,b$ to tangential directions on a concentric sphere.

For the inclusion $\iota:\Sigma\hookrightarrow\Omega$, set
\[
g^T=\iota^*g,\qquad
\mathrm{II}_g(U,V)=g(\nabla^g_U\nu_g,V),\qquad H(g)=\tr_{g^T}\mathrm{II}_g,
\]
where $\nu_g$ is the outward unit normal. Thus
\[
\nu=\nu_{\gb}=\partial_r,\qquad \mathrm{II}_{\gb}=\cot\delta\,\gb^T,
\qquad H(\gb)=(n-1)\cot\delta.
\]
For a symmetric tensor $h$, write
\[
\tau=\tr h,\qquad h_{\nu\nu}=h(\nu,\nu),\qquad
X_h=\bigl(h(\nu,\cdot)^T\bigr)^\sharp.
\]
We write $\tr_\Sigma h=\tr_{\gb^T}h^T$. The condition $h^T=0$ fixes the tangential--tangential components, while
$h_{\nu\nu}$ and $X_h$ may be nonzero. In particular,
$\tau=h_{\nu\nu}$ on $\Sigma$ when $h^T=0$.

An infinitesimal boundary-fixing change of coordinates has the form
\[
\Lie_X\gb,\qquad X|_\Sigma=0,
\qquad (\Lie_X\gb)_{ij}=\nabla_iX_j+\nabla_jX_i.
\]
We refer to these tensors as \emph{gauge tensors}.

A smooth tensor is \emph{critical} if
\[
h^T=0,\qquad \DR(h)=0,\qquad \DHg(h)=0.
\]

\subsection{Linearizations and the static identity}
The scalar- and mean-curvature linearizations at $\gb$ are
\begin{align}
\DR(h)&=-\Delta\tau+\divg\divg h-(n-1)\tau,
\label{eq:DR}\\
2\DHg(h)&=(d\tau-\divg h)(\nu)-\divg_\Sigma X_h
-\ip{\mathrm{II}_{\gb}}{h^T}.
\label{eq:DH}
\end{align}
The formal scalar-curvature adjoint is
\[
\DRs \varphi=\Hess \varphi-(\Delta \varphi)\gb-(n-1)\varphi\gb.
\]
The static potential and its boundary value are
\[
\lambda=\cos r,\qquad c=\cos\delta.
\]
They satisfy
\[
\Hess\lambda=-\lambda\gb,\qquad \Delta\lambda=-n\lambda,
\qquad \DRs\lambda=0,\qquad \partial_\nu\lambda=-\sin\delta.
\]

\begin{lemma}\label{lem:green}
For every smooth $h$ and every smooth $\varphi$ on the closed cap,
\begin{align}\label{eq:green-full}
\int_\Omega \varphi\DR(h)\dd v+2\int_\Sigma \varphi\DHg(h)\dd\sigma
={}&\int_\Omega\ip{\DRs \varphi}{h}\dd v\\
&+\int_\Sigma\bigl((\partial_\nu \varphi)\tr_\Sigma h
-\varphi\ip{\mathrm{II}_{\gb}}{h^T}\bigr)\dd\sigma.\notag
\end{align}
In particular, if $h^T=0$, then
\begin{equation}\label{eq:green-fixed}
\int_\Omega \varphi\DR(h)\dd v+2\int_\Sigma \varphi\DHg(h)\dd\sigma
=\int_\Omega\ip{\DRs \varphi}{h}\dd v.
\end{equation}
Taking $\varphi=\lambda$ gives \eqref{eq:intro-static}.
\end{lemma}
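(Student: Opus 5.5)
The plan is to integrate the two linearizations \eqref{eq:DR} and \eqref{eq:DH} by parts separately and observe that the interior boundary terms cancel against the mean-curvature term. No earlier result is needed beyond the divergence theorem on $\Omega$ with outward normal $\nu=\partial_r$, and on the closed manifold $\Sigma$. Throughout, $\tau=\tr h$ and $\divg h$ denotes the one-form $(\divg h)_j=\nabla^ih_{ij}$.

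\textbf{Step 1 (interior terms).} For $-\varphi\Delta\tau$, Green's second identity gives
\[
-\int_\Omega\varphi\Delta\tau=-\int_\Omega\tau\Delta\varphi+\int_\Sigma\bigl(\tau\,\partial_\nu\varphi-\varphi\,\partial_\nu\tau\bigr).
\]
For $\varphi\divg\divg h$, integrating by parts twice gives
\[
\int_\Omega\varphi\divg\divg h=\int_\Omega\ip{\Hess\varphi}{h}+\int_\Sigma\bigl(\varphi(\divg h)(\nu)-h(\nabla\varphi,\nu)\bigr).
\]
Collecting these with $-(n-1)\varphi\tau$ produces exactly $\int_\Omega\ip{\DRs\varphi}{h}$ plus the boundary integral
\[
\int_\Sigma\Bigl(\tau\,\partial_\nu\varphi-\varphi\,\partial_\nu\tau+\varphi(\divg h)(\nu)-h(\nabla\varphi,\nu)\Bigr).
\]

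\textbf{Step 2 (boundary terms).} The main bookkeeping is on $\Sigma$. Decompose $\nabla\varphi=(\partial_\nu\varphi)\nu+\nabla^\Sigma\varphi$, so that
\[
h(\nabla\varphi,\nu)=(\partial_\nu\varphi)h_{\nu\nu}+\ip{X_h}{\nabla^\Sigma\varphi}.
\]
Use $\tau=h_{\nu\nu}+\tr_\Sigma h$. Then $\tau\partial_\nu\varphi-(\partial_\nu\varphi)h_{\nu\nu}=(\partial_\nu\varphi)\tr_\Sigma h$. On the closed manifold $\Sigma$,
\[
-\int_\Sigma\ip{X_h}{\nabla^\Sigma\varphi}=\int_\Sigma\varphi\divg_\Sigma X_h.
\]
Hence the boundary integral from Step 1 equals
\[
\int_\Sigma\Bigl((\partial_\nu\varphi)\tr_\Sigma h-\varphi\bigl[(d\tau-\divg h)(\nu)-\divg_\Sigma X_h\bigr]\Bigr).
\]
By \eqref{eq:DH}, the bracket is $2\DHg(h)+\ip{\mathrm{II}_{\gb}}{h^T}$. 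Moving $-2\int_\Sigma\varphi\DHg(h)$ to the left-hand side yields \eqref{eq:green-full}. The only point requiring care is keeping sign conventions consistent: the outward normal, and $\Delta=\tr\Hess$.

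\textbf{Step 3 (consequences).} If $h^T=0$, both $\tr_\Sigma h$ and $\ip{\mathrm{II}_{\gb}}{h^T}$ vanish, which gives \eqref{eq:green-fixed}. Taking $\varphi=\lambda$, we have $\DRs\lambda=0$, because $\Hess\lambda=-\lambda\gb$ and $\Delta\lambda=-n\lambda$ give $-\lambda+n\lambda-(n-1)\lambda=0$. Together with $\lambda|_\Sigma=c$, this gives \eqref{eq:intro-static}.
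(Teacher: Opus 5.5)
Your proof is correct and is essentially the paper's argument. Both integrate $\DR(h)$ by parts twice to obtain the boundary integrand $\varphi(\divg h-d\tau)(\nu)+\tau\partial_\nu\varphi-h(\nabla\varphi,\nu)$, then use $\tau-h_{\nu\nu}=\tr_\Sigma h$ and the tangential divergence theorem on the closed $\Sigma$ to recognize $2\DHg(h)+\ip{\mathrm{II}_{\gb}}{h^T}$ via \eqref{eq:DH}; your sign bookkeeping and the check $\DRs\lambda=0$ are all right.
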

\begin{proof}
Twice integrating \eqref{eq:DR} by parts gives
\[
\int_\Omega \varphi\DR(h)\dd v-\int_\Omega\ip{\DRs \varphi}{h}\dd v
=\int_\Sigma\left[\varphi(\divg h-d\tau)(\nu)
+\tau\partial_\nu \varphi-h(\nabla \varphi,\nu)\right]\dd\sigma.
\]
Add $2\int_\Sigma \varphi\DHg(h)\,d\sigma$ and use
\[
-\int_\Sigma \varphi\divg_\Sigma X_h\dd\sigma
=\int_\Sigma h(\nabla_\Sigma \varphi,\nu)\dd\sigma,
\qquad \tau-h_{\nu\nu}=\tr_\Sigma h.
\]
This gives \eqref{eq:green-full}.
\end{proof}

\begin{corollary}
For a smooth tensor with $h^T=0$ on a proper cap,
\[
\DR(h)\geq0,\quad \DHg(h)\geq0
\quad\Longrightarrow\quad \DR(h)=\DHg(h)=0.
\]
This implication fails on the hemisphere.
\end{corollary}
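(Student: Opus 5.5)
The implication follows from the static identity \eqref{eq:intro-static}, which is the special case $\varphi=\lambda$ of \eqref{eq:green-fixed} in Lemma~\ref{lem:green}. Since $\DRs\lambda=0$ and $h^T=0$, the lemma gives
\[
\int_\Omega\lambda\DR(h)\dd v+2c\int_\Sigma\DHg(h)\dd\sigma=0,
\]
where we used $\lambda|_\Sigma=\cos\delta=c$. On a proper cap, $0\le r\le\delta<\pi/2$, so $\lambda=\cos r\ge c>0$ on $\Omega$ and $c>0$. Hence both integrands are products of a strictly positive weight with a nonnegative continuous function. Their weighted integrals are nonnegative and sum to zero, so each vanishes. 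Continuity of the integrands and positivity of the weights then force $\DR(h)\equiv0$ on $\Omega$ and $\DHg(h)\equiv0$ on $\Sigma$. For a smooth $h$ on the closed cap, $\DR(h)$ and $\DHg(h)$ are smooth, so vanishing of the integral of a nonnegative continuous function over a set whose nonempty open subsets have positive measure gives pointwise vanishing.

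For the hemisphere ($\delta=\pi/2$), the boundary weight $c=\cos\delta$ is zero. The identity therefore says only $\int_\Omega\lambda\DR(h)\dd v=0$ and places no constraint on $\DHg(h)$. To see that the implication fails, I would exhibit a smooth $h$ with $h^T=0$, $\DR(h)=0$, and $\DHg(h)\ge0$ but not identically zero. The natural choice is a conformal-type variation $h=\psi\,\gb$ plus a correction, or more simply a tensor of the form $h=2u\,dr^2$. With $h^T=0$, formula \eqref{eq:DH} gives $2\DHg(h)=(d\tau-\divg h)(\nu)-\divg_\Sigma X_h$, which for $h=2u\,dr^2$ reduces to a first-order expression in $u$ at the boundary. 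I would pick a radial function $u$, compactly supported near the boundary, with $\DR(h)=0$ in the interior. This amounts to solving the second-order radial ODE $\DR(2u\,dr^2)=0$, that is, $-\Delta(2u)+\divg\divg(2u\,dr^2)-2(n-1)u=0$, which is first order after cancellation of the second radial derivatives. Choosing the initial data at $r=\pi/2$ appropriately then gives a nonzero boundary mean-curvature variation of fixed sign.

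Alternatively, one can cite the Brendle--Marques--Neves deformations \cite{bmn} directly. Their velocity at $\gb$ satisfies $\DR(h)\ge0$ with strict inequality somewhere, $h^T=0$, and $\DHg(h)=0$, which violates the conclusion $\DR(h)=0$. The only step needing care is this hemisphere counterexample, since the positive direction is immediate from Lemma~\ref{lem:green}. I would first try the explicit radial ODE computation, falling back on the first-order jet of the \cite{bmn} family if the ODE computation becomes messy.
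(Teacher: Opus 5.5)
Your proper-cap argument is the same as the paper's: in the static identity \eqref{eq:intro-static} the weights $\lambda\geq c>0$ and $2c>0$ are positive, so both nonnegative integrands vanish. The gap is the hemisphere counterexample. Neither of your two constructions can work.

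First, take any $h=2u\,dr^2$, radial or not. Then $\tau=2u$, $(\divg h)(\nu)=2u'+2(n-1)\cot r\,u$, and $X_h=0$. In \eqref{eq:DH} the derivative terms cancel, leaving $\DHg(h)=-(n-1)\cot\delta\,u$ on $\Sigma$, as in \eqref{eq:DH-radial}. At $\delta=\pi/2$ the coefficient $\cot\delta$ vanishes, so every tensor of this form has $\DHg(h)=0$ and cannot violate the implication. Independently, for radial $u$ the equation $\DR(h)=0$ is the first-order equation $\bigl(\cos r\,\sin^{n-1}r\,w\bigr)'=0$ with $w=(n-1)\cot r\,u$; compare the degree-zero case of Lemma~\ref{lem:low-scalar}. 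Its solutions $u=C/(\cos^2r\,\sin^{n-2}r)$ blow up both at the center and at the equator. So there is no nonzero solution vanishing near the center, and no freedom in initial data at $r=\pi/2$.

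Second, your fallback contradicts the identity you just wrote. With $c=0$ it reads $\int_\Omega\lambda\DR(h)\dd v=0$. Since $\lambda>0$ in the interior, $h^T=0$ and $\DR(h)\geq0$ already force $\DR(h)\equiv0$. So no velocity with $h^T=0$ can have $\DR(h)\geq0$ with strict inequality somewhere. On the hemisphere the implication can only fail through $\DHg$.

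The missing step is the conformal variation you mentioned and then dropped; no correction is needed. Take $h=-2\lambda\gb$. Since $\lambda=0$ on the equator, $h^T=0$. From \eqref{eq:DR}, $\DR(\psi\gb)=-(n-1)(\Delta\psi+n\psi)$, so $\DR(h)=2(n-1)(\Delta\lambda+n\lambda)=0$. Since $X_h=0$, \eqref{eq:DH} gives $\DHg(h)=-(n-1)\partial_\nu\lambda=n-1>0$.

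Geometrically, $h=2\Hess\lambda=\Lie_{\nabla\lambda}\gb$ is a gauge tensor whose generator $\nabla\lambda=-\sin r\,\partial_r$ does not vanish on the boundary. Its flow shrinks the hemisphere to caps $B(\delta)$ with $\delta<\pi/2$. This preserves the boundary metric to first order while increasing the mean curvature.
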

\begin{proof}
For $\delta<\pi/2$, the two integrands in \eqref{eq:intro-static} are
nonnegative and $\lambda\geq c>0$. They therefore vanish. At $\delta=\pi/2$,
the tensor $h=-2\lambda\gb$ has $h^T=0$, while
\[
\DR(h)=2(n-1)(\Delta\lambda+n\lambda)=0,
\qquad \DHg(h)=-(n-1)\partial_\nu\lambda=n-1>0.
\]
\end{proof}

\subsection{Divergence gauge and the quadratic expansion}
We use the boundary-fixing change of coordinates in
\cite[Proposition~11]{bm}, in H\"older spaces. The stated regularity and
estimate follow from the boundary Schauder estimates for the equation
imposing $\divg h=0$, with the vector field zero on $\Sigma$ \cite{adn}.
\begin{lemma}\label{lem:slice}
If $g^T=\gb^T$ and $g$ is sufficiently close to $\gb$ in $C^{2,\alpha}$,
there is a boundary-fixing $C^{3,\alpha}$ diffeomorphism $\phi$ such that
\[
h=\phi^*g-\gb,\qquad \divg h=0,\qquad h^T=0,
\qquad \norm{h}_{C^{2,\alpha}}\leq C\norm{g-\gb}_{C^{2,\alpha}}.
\]
\end{lemma}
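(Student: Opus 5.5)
We will follow the argument of \cite[Proposition~11]{bm}: find a boundary-fixing diffeomorphism of the form $\phi=\exp(X)$, meaning the map $x\mapsto\exp_x(X(x))$ taken with respect to $\gb$, where $X$ is a $C^{3,\alpha}$ vector field with $X|_\Sigma=0$. The field $X$ is obtained from the implicit function theorem applied to the nonlinear gauge map
\[
\mathcal F(g,X)=\divg_{\gb}\bigl(\exp(X)^*g-\gb\bigr).
\]
We regard $\mathcal F$ as a map $C^{2,\alpha}(\mathrm{Sym}^2)\times C^{3,\alpha}_0(T\Omega)\to C^{1,\alpha}(T^*\Omega)$, where $C^{3,\alpha}_0$ denotes fields vanishing on $\Sigma$. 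One subtlety is that pulling back a $C^{2,\alpha}$ metric by a $C^{3,\alpha}$ map only gives $C^{2,\alpha}$, so $\mathcal F$ loses no derivatives; we only need it to be $C^1$ in $(g,X)$ near $(\gb,0)$. This holds because composition $g\circ\phi$ is differentiable in $\phi$ once we measure it in $C^{1,\alpha}$ after taking the divergence. If this is delicate, we instead work with the equivalent form $\divg_{\gb}(\phi^*g)$ expressed as $\phi^*$ applied to a first-order operator in $g$ relative to $(\phi^{-1})^*\gb$. Smallness of $X$ in $C^{3,\alpha}$ guarantees that $\phi$ is a diffeomorphism of $B(\delta)$ fixing $\Sigma$ pointwise. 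The boundary condition $h^T=0$ then follows automatically: $\phi|_\Sigma=\id$ gives $(\phi^*g)^T=g^T=\gb^T$.

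The key step is the linearization in $X$ at $(\gb,0)$:
\[
D_X\mathcal F(\gb,0)Y=\divg\Lie_Y\gb=\Delta Y^\flat+\nabla\divg Y+\mathrm{Ric}(Y)=\Delta Y+\nabla\divg Y+(n-1)Y,
\]
with Dirichlet condition $Y|_\Sigma=0$. This is a strongly elliptic system: its principal symbol is $|\xi|^2\id+\xi\otimes\xi$, which is positive definite. The Dirichlet condition satisfies the complementing condition, so the ADN Schauder estimates \cite{adn} show the operator $C^{3,\alpha}_0\to C^{1,\alpha}$ is Fredholm of index zero. It remains to prove injectivity. If $\divg\Lie_Y\gb=0$ with $Y|_\Sigma=0$, integration by parts gives
\[
0=\tfrac12\int_\Omega|\Lie_Y\gb|^2\dd v,
\]
with no boundary term since $Y$ vanishes on $\Sigma$. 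So $Y$ is a Killing field vanishing on the hypersurface $\Sigma$. Its covariant derivative $\nabla Y$ is then skew and annihilates $T\Sigma$, and the normal component also vanishes because $\nabla_\nu Y\cdot\nu=0$ by skewness. Hence $Y$ and $\nabla Y$ vanish at a point, so $Y\equiv0$. This gives invertibility.

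The implicit function theorem now produces $X=X(g)$ with $\mathcal F(g,X(g))=0$ and $\norm{X}_{C^{3,\alpha}}\leq C\norm{g-\gb}_{C^{2,\alpha}}$. Finally,
\[
h=\phi^*g-\gb=\phi^*(g-\gb)+(\phi^*\gb-\gb),
\]
and both pieces are bounded in $C^{2,\alpha}$ by $C\norm{g-\gb}_{C^{2,\alpha}}$, the second by $C\norm{X}_{C^{3,\alpha}}$.

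The main obstacle is the regularity bookkeeping, namely the $C^1$ dependence of $\mathcal F$ with no derivative loss. If the direct approach fails, we first approximate $g$ by smooth metrics with the same boundary data. We then solve with uniform estimates, obtained from the invertible linearization and a contraction-mapping iteration in which each step solves the Dirichlet problem for $\divg\Lie_Y\gb$. Finally we pass to the limit, which preserves the bound.
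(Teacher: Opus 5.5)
Your argument is the one the paper has in mind. The paper simply invokes the boundary-fixing gauge of \cite[Proposition~11]{bm} together with the Agmon--Douglis--Nirenberg Schauder theory for the Dirichlet problem for $Y\mapsto\divg\Lie_Y\gb$. Your linearization, the Fredholm and complementing-condition discussion, the injectivity argument (a Killing field vanishing on $\Sigma$ has $\nabla Y=0$ there, hence vanishes identically), and the final splitting $h=\phi^*(g-\gb)+(\phi^*\gb-\gb)$ are all correct.

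The one step to fix is the justification of the implicit function theorem. The map $\mathcal F(g,X)=\divg_{\gb}(\exp(X)^*g-\gb)$ is \emph{not} $C^1$ from $C^{2,\alpha}\times C^{3,\alpha}_0$ into $C^{1,\alpha}$ when $g$ is only $C^{2,\alpha}$. At $(g,0)$ its $X$-derivative is $Y\mapsto\divg_{\gb}\Lie_Y g$. Since $\Lie_Yg$ contains $Y^k\partial_kg_{ij}$, it is only $C^{1,\alpha}$, so this derivative lands in $C^{0,\alpha}$. Even continuity in $X$ fails, because $\phi\mapsto(\partial g)\circ\phi$ is not continuous into $C^{1,\alpha}$ (translation is not continuous on H\"older spaces). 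Taking the divergence does not rescue this.

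So the reformulation you list as a fallback is not optional; it is the argument. By naturality of the divergence,
\[
\divg_{\gb}(\phi^*g)=\phi^*\bigl(\divg_{\psi^*\gb}\,g\bigr),\qquad \psi=\phi^{-1}.
\]
One therefore solves $\mathcal G(g,X)=\divg_{\exp(X)^*\gb}\,g=0$ for $\psi=\exp(X)$. Now $X$ enters only through the smooth metric $\gb$, so $\mathcal G$ is smooth from $C^{2,\alpha}\times C^{3,\alpha}_0$ into $C^{1,\alpha}$. Its $X$-linearization at $(\gb,0)$ is $-\divg\Lie_Y\gb$, so your invertibility argument applies verbatim. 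Then $\phi=\psi^{-1}$ is a $C^{3,\alpha}$ diffeomorphism fixing $\Sigma$ and $C^{3,\alpha}$-close to the identity, with closeness controlled by $C\norm{g-\gb}_{C^{2,\alpha}}$.

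Your smoothing fallback is unnecessary. As sketched, it would also meet the same derivative loss in the contraction estimate, whose constants would depend on higher norms of the approximating metrics.
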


For $g=\gb+h$ inducing $\gb^T$, put
\begin{equation}\label{eq:defect}
D(g)=\int_\Omega\lambda\bigl(R(g)-n(n-1)\bigr)\dd v
+\int_\Sigma(2-h_{\nu\nu})\lambda
\bigl(H(g)-H(\gb)\bigr)\dd\sigma.
\end{equation}
All measures in $D$ are fixed round measures. For smooth $h$ with
$h^T=0$, define
\begin{equation}\label{eq:Q-definition}
Q(h)=-\frac12\left.\frac{d^2}{dt^2}\right|_{t=0}D(\gb+th).
\end{equation}
When $\divg h=0$, the Brendle--Marques calculation \cite{bm,cmt} gives
\begin{align}\label{eq:Q}
Q(h)={}&\frac14\int_\Omega\lambda
(\abs{\nabla h}^2+\abs{\nabla\tau}^2)\dd v
+\frac12\int_\Omega\lambda(\abs h^2+\tau^2)\dd v\\
&+\frac{(n+3)c^2-4}{4\sin\delta}\int_\Sigma h_{\nu\nu}^2\dd\sigma
+\frac{(n+1)c^2-1}{2\sin\delta}\int_\Sigma\abs{X_h}^2\dd\sigma.
\notag
\end{align}
This formula extends $Q$ continuously to divergence-free $H^1$ tensors
with $h^T=0$.

\begin{lemma}
If $h\in C^2$, $\divg h=0$, $h^T=0$, and $\norm h_{C^1}$ is sufficiently small, then
\begin{equation}\label{eq:BM}
\abs{D(\gb+h)+Q(h)}\leq C\norm h_{C^1}\norm h_{H^1}^2.
\end{equation}
If $\gb+h$ is admissible and $\abs{h_{\nu\nu}}<1$ on $\Sigma$, then
$D(\gb+h)\geq0$.
\end{lemma}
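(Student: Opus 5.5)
The lemma has two parts: the Taylor estimate \eqref{eq:BM} and the sign of $D$ for admissible metrics. The plan is to write $D(\gb+h)$ as a function of $h$ and expand it to second order around $h=0$, checking that the zeroth and first orders vanish and that the cubic remainder has the stated form.

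\emph{Orders zero and one.} At $t=0$ both curvature defects vanish, so $D(\gb)=0$. For the first order I differentiate $D(\gb+th)$ at $t=0$. The factor $(2-th_{\nu\nu})$ multiplies $H-H(\gb)$, which vanishes at $t=0$, so only the term $2$ survives. This gives
\[
\int_\Omega\lambda\DR(h)\dd v+2c\int_\Sigma\DHg(h)\dd\sigma ,
\]
which is zero by \eqref{eq:intro-static}, since $h^T=0$ and $\lambda=c$ on $\Sigma$. By the definition \eqref{eq:Q-definition}, the second-order term is exactly $-Q(h)$.

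\emph{The remainder.} I would expand with an integral remainder rather than differentiating three times, to keep the regularity requirement at $C^2$. The interior part uses the standard pointwise expansion
\[
R(\gb+h)=R(\gb)+\DR(h)+\tfrac12D^2R(h,h)+E .
\]
Schematically, $E$ is a sum of terms $h*h*\nabla^2h$, $h*\nabla h*\nabla h$ and $h*h*h$ (up to lower-order pieces), with coefficients smooth in $h$ for $\abs h$ small. The $\nabla^2h$ terms are the delicate ones. I integrate them by parts once against $\lambda$, putting a derivative onto $h*h\lambda$, which produces terms bounded by $\norm h_{C^1}\bigl(\abs{\nabla h}^2+\abs h^2\bigr)$. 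The integration by parts also leaves boundary terms of type $h*h*\nabla h$ on $\Sigma$. The same reasoning applies to $H$, which involves only first derivatives of the metric: its cubic remainder is pointwise $O\bigl(\norm h_{C^1}(\abs h^2+\abs{\nabla h}^2)\bigr)$ on $\Sigma$. The cross term $-h_{\nu\nu}\lambda(H-H(\gb))$ has its own cubic part, $-h_{\nu\nu}\lambda$ times the quadratic part of $H$, which is bounded in the same way.

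\emph{Main obstacle: the boundary terms.} All these boundary quantities are controlled by $\norm h_{C^1}\int_\Sigma(\abs h^2+\abs{\nabla h}\abs h)\dd\sigma$. This still has to be turned into $\norm h_{H^1}^2$. The trace inequality $\int_\Sigma\abs h^2\dd\sigma\le C\norm h_{H^1}^2$ handles $\abs h^2$, but it does not handle $\abs{\nabla h}\abs h$ on $\Sigma$. Here I would use $h^T=0$ and $\divg h=0$. Tangential derivatives of $h^T$ vanish on $\Sigma$. Normal derivatives of $h_{\nu\nu}$ and $X_h$ are expressible through tangential derivatives via $\divg h=0$. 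The remaining components of $\nabla h$ on $\Sigma$ are tangential derivatives of $h_{\nu\nu}$ and $X_h$, and I would remove these by integrating by parts along $\Sigma$ onto the smooth coefficients. If some normal derivative still survives, I would rewrite it as an interior divergence, $\int_\Sigma f\,\partial_\nu h=\int_\Omega\divg(\cdots)$, which gives interior terms bounded by $\norm h_{C^1}\norm h_{H^1}^2$. This step is where most of the care goes.

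\emph{Sign of $D$.} This part is immediate. Admissibility gives $R(g)-n(n-1)\ge0$ and $H(g)-H(\gb)\ge0$. Moreover $\lambda\ge c>0$ on the proper cap, and $2-h_{\nu\nu}>1$ when $\abs{h_{\nu\nu}}<1$. Both integrands in \eqref{eq:defect} are therefore nonnegative, so $D(\gb+h)\ge0$.
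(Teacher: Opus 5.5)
Your framework is the paper's. The paper takes \eqref{eq:BM} from the Brendle--Marques expansion and bounds the cubic interior and boundary remainders with the trace theorem. It also checks that the Brendle--Marques boundary coefficients reproduce \eqref{eq:Q}, which is where $\divg h=0$ enters. Your treatment of orders zero and one, of the interior remainder (one integration by parts on the $h*h*\nabla^2h$ terms), and of the sign of $D$ is correct.

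The step you call the main obstacle, however, rests on a lossy estimate, and the repair you sketch would not work. Every cubic boundary term is pointwise $O\bigl(\abs h^2(\abs h+\abs{\nabla h})\bigr)$, with at most one derivative factor, because $H$ is affine in first derivatives of the metric. This covers three sources:
\begin{itemize}
\item the cubic remainder of $H$;
\item the cross term $-h_{\nu\nu}\lambda\bigl(H(\gb+h)-H(\gb)-\DHg(h)\bigr)$;
\item the boundary terms produced when you integrate $h*h*\nabla^2h$ by parts.
\end{itemize}
Put that single derivative factor in $L^\infty$. The integrand is then at most $C\norm h_{C^1}\abs h^2$, and the trace inequality gives $C\norm h_{C^1}\norm h_{H^1}^2$ directly. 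Your bound $O\bigl(\norm h_{C^1}(\abs h^2+\abs{\nabla h}^2)\bigr)$ and the leftover $\int_\Sigma\abs{\nabla h}\abs h\,d\sigma$ come from putting an undifferentiated factor in $L^\infty$ instead.

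The proposed fix also fails on its own terms:
\begin{itemize}
\item The coefficients of these boundary terms contain two factors of $h$; they are not fixed smooth functions. Integrating by parts along $\Sigma$ therefore only moves the derivative onto another factor of $h$.
\item The conditions $h^T=0$ and $\divg h=0$ say nothing about $\nabla_\nu h^T$ on $\Sigma$.
\item Rewriting $\int_\Sigma f\,\partial_\nu h\,d\sigma$, with $f$ quadratic in $h$, as an interior divergence produces a term $f\,\nabla^2h$. That term is controlled only by $\norm h_{C^2}$, not $\norm h_{C^1}$, so the lemma as stated would not follow.
\end{itemize}
Drop that paragraph. With the correct pointwise bound, the remainder estimate closes without using $\divg h=0$ at all.
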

\begin{proof}
The expansion is \cite[Theorem~10]{bm}, equivalently
\cite[Theorem~2.1]{cmt}. In their boundary terms,
\[
\partial_\nu\lambda+\frac14H(\gb)\lambda
=\frac{(n+3)c^2-4}{4\sin\delta},\qquad
\frac12\partial_\nu\lambda+\frac{n}{2(n-1)}H(\gb)\lambda
=\frac{(n+1)c^2-1}{2\sin\delta}.
\]
The cubic interior and boundary remainders are bounded by the right side of
\eqref{eq:BM}, using the trace theorem. The final assertion follows directly
from \eqref{eq:defect}.
\end{proof}

\begin{lemma}
For each proper cap there are $\kappa>0$ and $C<\infty$ such that
\begin{equation}\label{eq:garding}
Q(h)\geq\kappa\norm h_{H^1}^2-C\norm h_{L^2}^2
\qquad(h\in H^1,\ \divg h=0,\ h^T=0).
\end{equation}
\end{lemma}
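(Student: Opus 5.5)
Since $\lambda\geq c>0$ on the proper cap, the interior part of \eqref{eq:Q} is already coercive:
\[
\frac14\int_\Omega\lambda(\abs{\nabla h}^2+\abs{\nabla\tau}^2)\dd v
+\frac12\int_\Omega\lambda(\abs h^2+\tau^2)\dd v
\geq\frac c4\norm h_{H^1}^2 .
\]
The boundary coefficients $\frac{(n+3)c^2-4}{4\sin\delta}$ and $\frac{(n+1)c^2-1}{2\sin\delta}$ are fixed constants but may be negative, so the plan is to absorb the boundary integrals by an interpolation trace inequality. Let $A\geq0$ be the maximum of the negative parts of these two coefficients. Then
\[
Q(h)\geq\frac c4\norm h_{H^1}^2-A\int_\Sigma\bigl(h_{\nu\nu}^2+\abs{X_h}^2\bigr)\dd\sigma
\geq\frac c4\norm h_{H^1}^2-A\int_\Sigma\abs h^2\dd\sigma ,
\]
using $h_{\nu\nu}^2+\abs{X_h}^2\leq\abs h^2$ pointwise. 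This holds for smooth $h$, and for $H^1$ tensors by the continuous extension of $Q$ noted after \eqref{eq:Q}.

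The key step is the trace inequality with small constant: for every $\eta>0$ there is $C_\eta$ with
\[
\int_\Sigma\abs h^2\dd\sigma\leq\eta\norm h_{H^1}^2+C_\eta\norm h_{L^2}^2
\qquad(h\in H^1(\Omega)).
\]
To prove it, take a smooth vector field $Y$ on $\Omega$ with $Y=\nu$ on $\Sigma$, for instance $Y=\chi(r)\partial_r$ with $\chi$ a cutoff equal to one near $r=\delta$ and zero near the center. The divergence theorem gives
\[
\int_\Sigma\abs h^2\dd\sigma=\int_\Omega\divg(\abs h^2Y)\dd v
\leq C\int_\Omega\bigl(\abs h\,\abs{\nabla h}+\abs h^2\bigr)\dd v .
\]
Young's inequality then yields the claim. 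This is first proved for smooth $h$ and extends by density, since both sides are continuous in $H^1$ by the trace theorem.

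Choose $\eta=c/(8A)$ when $A>0$. Then $Q(h)\geq\frac c8\norm h_{H^1}^2-AC_\eta\norm h_{L^2}^2$, which is \eqref{eq:garding} with $\kappa=c/8$. The constraints $\divg h=0$ and $h^T=0$ enter only through the validity of formula \eqref{eq:Q}, not in the estimate itself.

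The only point needing care, and the main potential obstacle, is justifying \eqref{eq:Q} for $H^1$ tensors rather than smooth ones. The paper defines $Q$ on divergence-free $H^1$ tensors by the right side of \eqref{eq:Q}, so the estimate applies directly to that expression. The constants depend on $\delta$ through $c$ and $\sin\delta$, and $\kappa$ degenerates as $\delta\uparrow\pi/2$. This is consistent with the paper's use of compactness on each fixed cap.
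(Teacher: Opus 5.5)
Your proof is correct and follows essentially the same route as the paper. You use $\lambda\geq c$ for interior coercivity, bound the possibly negative boundary terms by $\norm h_{L^2(\Sigma)}^2$, and absorb them with the $\eta$-trace inequality; the only differences are cosmetic: you prove that trace inequality directly via the divergence theorem with a cutoff radial field, whereas the paper simply cites it and works with $\norm{\nabla h}_{L^2}^2$.
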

\begin{proof}
Since $\lambda\geq c>0$, \eqref{eq:Q} gives
\[
Q(h)\geq\frac c4\norm{\nabla h}_{L^2(\Omega)}^2
-C\norm h_{L^2(\Sigma)}^2.
\]
For every $\eta>0$, the trace inequality gives
\[
\norm h_{L^2(\Sigma)}^2
\leq\eta\norm{\nabla h}_{L^2(\Omega)}^2
+C_\eta\norm h_{L^2(\Omega)}^2.
\]
Choosing $C\eta\leq c/8$ yields
$Q(h)\geq(c/8)\norm{\nabla h}_{L^2}^2-C\norm h_{L^2}^2$,
which proves \eqref{eq:garding} after changing $C$.
\end{proof}

\section{The constrained Hessian}\label{sec:geometry}
The divergence-free formula \eqref{eq:Q} is suited to the nonlinear
curvature expansion. For the spherical calculation, we use a tensor whose
tangential components vanish on every concentric sphere. We justify this
change of gauge by expressing $Q$ as a Hessian on critical tensors.

\subsection{Spherical modes}\label{subsec:modes}
For the geometric calculations put $m=n-1$ and $s=\sin r$; thus
$\gamma_r=s^2\gamma$ is the metric on a concentric sphere. Primes denote
differentiation with respect to $r$; derivatives along a metric path are
written as $\partial_t$, with $t$ reserved for the path parameter. The symbol $D$
in angular components denotes the connection of $\gamma$. A scalar harmonic
of degree $\ell$ satisfies
\[
-\Delta_\gamma Y=\mu Y,\qquad \mu=\ell(\ell+m-1).
\]
For $\ell\geq2$, write
$Y^\circ_{ab}=D_aD_bY+(\mu/m)Y\gamma_{ab}$. This tensor vanishes in degrees
$0$ and $1$. A scalar mode has components
\begin{equation}\label{eq:scalar-mode}
h_{rr}=a(r)Y,\qquad h_{ra}=b(r)D_aY,\qquad
h_{ab}=s^2\bigl(p(r)Y\gamma_{ab}+f(r)Y^\circ_{ab}\bigr).
\end{equation}
A coexact one-form harmonic $V$ of degree $\ell\geq1$ satisfies
\[
D^aV_a=0,\qquad -D^aD_aV=(\mu-1)V.
\]
Write $V^\sharp$ for its $\gamma$-dual. A vector mode has the form
\begin{equation}\label{eq:vector-mode}
h_{rr}=0,\qquad h_{ra}=b(r)V_a,\qquad
h_{ab}=s^2f(r)(D_aV_b+D_bV_a).
\end{equation}
A tensor mode is
\begin{equation}\label{eq:tensor-mode}
h_{rr}=h_{ra}=0,\qquad h_{ab}=s^2f(r)T_{ab},
\qquad D^aT_{ab}=0,\quad \gamma^{ab}T_{ab}=0,
\end{equation}
where $T$ is a tangential trace-free divergence-free eigentensor. Such
summands are included only when they exist.

Choose orthonormal bases in the scalar, coexact-vector, and tensor harmonic
spaces. A \emph{block} consists of one degree and one of these three types,
including all angular multiplicities and arbitrary radial coefficients.
Write $\Pi_\beta$ for its projection. Distinct normalized angular basis elements in a block can be treated
separately. The full orthogonality and the behavior of these projections on
$H^1$ tensors will be proved in Lemma~\ref{lem:blocks}.

The boundary condition $h^T=0$ is $p(\delta)=f(\delta)=0$ in scalar type
and $f(\delta)=0$ in non-Killing vector and tensor modes.
The condition on $f$ is absent in scalar degrees $0,1$, and the vector condition
is absent when $V$ is Killing, since its tangential tensor is then zero.
For one-forms $\alpha,\beta$, we use
$\alpha\odot\beta=(\alpha\otimes\beta+\beta\otimes\alpha)/2$.
A superscript $\circ$ on a tangential tensor denotes its $\gamma_r$-trace-free
part.

\subsection{Identification and gauge invariance}
On metrics inducing $\gb^T$, consider the functional
\[
I(g)=\int_\Omega\lambda\bigl(R(g)-n(n-1)\bigr)\dd v_g
+2c\int_\Sigma\bigl(H(g)-H(\gb)\bigr)\dd\sigma.
\]
Unlike $D$, its interior measure is the metric volume form. By
Lemma~\ref{lem:green}, $DI(\gb)[h]=0$ for every $h$ with $h^T=0$.

\Needspace{10\baselineskip}
\begin{lemma}\label{lem:comparison}
For smooth $h$ with $h^T=0$,
\begin{equation}\label{eq:comparison}
Q(h)=-\frac12D^2I(\gb)[h,h]
+\frac12\int_\Omega\lambda\tau\DR(h)\dd v
+c\int_\Sigma h_{\nu\nu}\DHg(h)\dd\sigma.
\end{equation}
For critical $h$, therefore,
\begin{equation}\label{eq:critical-hessian}
Q(h)=-\frac12D^2I(\gb)[h,h].
\end{equation}
If also $X|_\Sigma=0$, then
\begin{equation}\label{eq:Q-gauge}
Q(h+\Lie_X\gb)=Q(h).
\end{equation}
\end{lemma}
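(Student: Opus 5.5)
The plan is to compare the two second derivatives of $D$ and $I$ along the straight path $g_t=\gb+th$ directly. Both functionals are built from the same curvature quantities; they differ only in the interior measure and the boundary weight.

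Write $R_t=R(g_t)$, $H_t=H(g_t)$ and $dv_t=dv_{g_t}$. Since $R_0=n(n-1)$ and $H_0=H(\gb)$, differentiating $D$ twice at $t=0$ gives
\[
D''=\int_\Omega\lambda\,\partial_t^2R\dd v
+\int_\Sigma\bigl(2\lambda\,\partial_t^2H-2h_{\nu\nu}\lambda\,\partial_tH\bigr)\dd\sigma .
\]
Here $h_{\nu\nu}$ enters linearly in $t$, so the cross term carries a factor $2$. On $\Sigma$ we have $\lambda=c$, so the boundary part equals $\int_\Sigma(2c\,\partial_t^2H-2c\,h_{\nu\nu}\DHg(h))\dd\sigma$.

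For $I$, the boundary measure is fixed, because $g_t^T=\gb^T$. For the interior term, $\partial_t dv_t=\frac12\tau\dd v$ at $t=0$, and $\partial_t^2$ of $dv_t$ only multiplies $R_0-n(n-1)=0$, so that contribution drops out. Hence
\[
D^2I(\gb)[h,h]=\int_\Omega\lambda\,\partial_t^2R\dd v
+2\int_\Omega\lambda\,\DR(h)\,\tfrac12\tau\dd v
+2c\int_\Sigma\partial_t^2H\dd\sigma .
\]
Subtracting the two expressions gives
\[
D''=D^2I(\gb)[h,h]-\int_\Omega\lambda\tau\DR(h)\dd v-2c\int_\Sigma h_{\nu\nu}\DHg(h)\dd\sigma .
\]
Multiplying by $-\frac12$ yields \eqref{eq:comparison}, and \eqref{eq:critical-hessian} follows at once for critical $h$. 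The only points to check carefully are the factor $2$ in the cross terms and the vanishing of the zeroth-order defects.

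For the gauge invariance \eqref{eq:Q-gauge}, I would argue as follows. Both $h$ and $\Lie_X\gb$ have vanishing tangential part on $\Sigma$, so the combined tensor $h+\Lie_X\gb$ still satisfies the boundary condition. The argument uses two facts.

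First, $I$ is invariant under diffeomorphisms fixing $\Sigma$ pointwise. The interior term is invariant because $\lambda$ is not invariant but $\lambda\,dv_g$ pulls back correctly only if $\lambda$ is moved too; so I will instead treat $I(\phi_s^*g)$ via the first-variation formula. Let $\phi_s$ be the flow of $X$; since $X|_\Sigma=0$, it fixes the boundary, and the boundary term $H$ is preserved pointwise on $\Sigma$. We have
\[
\frac{d}{ds}I(\phi_s^*g)=DI(g)[\Lie_X g].
\]
Differentiating $DI(g)[\Lie_Xg]$ in $g$ at $\gb$ in direction $h$ gives
\[
D^2I(\gb)[h,\Lie_X\gb]+DI(\gb)[\Lie_Xh].
\]
The second term vanishes by Lemma~\ref{lem:green}, because $(\Lie_Xh)^T=0$ when $X|_\Sigma=0$ and $h^T=0$.

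Second, we need $DI(g)[\Lie_Xg]$ in terms of $g$-curvature. In the interior, $DR_g(\Lie_Xg)=X(R_g)$ and $\partial(dv_g)=\divg_g X\,dv_g$, so the integrand is $\lambda\,\divg_g\bigl((R_g-n(n-1))X\bigr)$. Integrating by parts, the boundary term vanishes since $X|_\Sigma=0$, leaving $-\int_\Omega(R_g-n(n-1))\,g(\nabla\lambda,X)\dd v_g$. On the boundary, $DH_g(\Lie_Xg)=X(H_g)=0$ on $\Sigma$. This expression vanishes to first order at $\gb$, so its derivative in the direction $h$ is $-\int_\Omega\DR(h)\,\ip{\nabla\lambda}{X}\dd v$.

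Combining the two facts gives
\[
D^2I(\gb)[h,\Lie_X\gb]=-\int_\Omega\DR(h)\ip{\nabla\lambda}{X}\dd v .
\]
This vanishes when $h$ is critical. Taking $h=\Lie_X\gb$, which satisfies $\DR=X(R(\gb))=0$, gives $D^2I(\gb)[\Lie_X\gb,\Lie_X\gb]=0$. Expanding the quadratic form then shows that $D^2I$ is unchanged under $h\mapsto h+\Lie_X\gb$ for critical $h$. Adding a gauge tensor preserves criticality, since $\DR$ and $\DHg$ of $\Lie_X\gb$ vanish as derivatives of constants along $X$ vanishing on $\Sigma$. So \eqref{eq:critical-hessian} applies to both $h$ and $h+\Lie_X\gb$, which gives \eqref{eq:Q-gauge}, as I read the statement as asserting it for critical $h$.

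The main obstacle is the bookkeeping of the mixed-derivative identity with the non-invariant weight $\lambda$. I will handle it through the integration-by-parts formula above rather than claiming invariance of $I$.
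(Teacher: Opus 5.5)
Your proposal is correct and follows the paper's proof. The same subtraction of second derivatives along $\gb+th$ gives \eqref{eq:comparison}, and gauge invariance rests on the same identity $D^2I(\gb)[h,\Lie_X\gb]+DI(\gb)[\Lie_Xh]=-\int_\Omega X(\lambda)\DR(h)\dd v$; the only differences are minor: you derive it from the first-variation formula and an integration by parts where the paper changes variables to move $\lambda$ by the flow, and you get $D^2I(\gb)[\Lie_X\gb,\Lie_X\gb]=0$ by setting $h=\Lie_X\gb$ rather than differentiating $I(\psi_t^*\gb)=0$ twice. Delete the opening assertion that $I$ is invariant under boundary-fixing diffeomorphisms: it is false because of the fixed weight $\lambda$, and your argument never uses it.
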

\begin{proof}
Along $g_t=\gb+th$, write
\[
R_j=\left.\partial_t^jR(g_t)\right|_{t=0},\qquad
H_j=\left.\partial_t^jH(g_t)\right|_{t=0}\quad(j=1,2).
\]
Thus $R_1=\DR(h)$ and $H_1=\DHg(h)$. The identities
\[
\left.\frac{d}{dt}\right|_0dv_{g_t}=\frac12\tau\,dv,
\qquad R(\gb)-n(n-1)=0
\]
give
\begin{align*}
\left.\frac{d^2}{dt^2}\right|_0
\int_\Omega\lambda\bigl(R(g_t)-n(n-1)\bigr)\dd v_{g_t}
&=\int_\Omega\lambda(R_2+\tau R_1)\dd v,\\
\left.\frac{d^2}{dt^2}\right|_0
\int_\Omega\lambda\bigl(R(g_t)-n(n-1)\bigr)\dd v
&=\int_\Omega\lambda R_2\dd v.
\end{align*}
The second derivatives of the respective boundary terms of $I$ and $D$
are $2c\int_\Sigma H_2\,d\sigma$ and
$2c\int_\Sigma(H_2-h_{\nu\nu}H_1)\,d\sigma$.
By \eqref{eq:Q-definition}, the second derivative of $D(\gb+th)$ at
$t=0$ is $-2Q(h)$. Subtracting proves
\eqref{eq:comparison}.

The fixed weight $\lambda$ prevents $I$ from being fully diffeomorphism
invariant. To check the restricted invariance, let $\psi_t$ be the flow of
$X$, with $X|_\Sigma=0$. Changing variables in the interior integral gives
\[
I(\psi_t^*g)=
\int_\Omega(\lambda\circ\psi_t^{-1})
\bigl(R(g)-n(n-1)\bigr)\dd v_g
+2c\int_\Sigma\bigl(H(g)-H(\gb)\bigr)\dd\sigma.
\]
Differentiating first in $t$ and then in the direction $h$ gives
\[
D^2I(\gb)[\Lie_X\gb,h]+DI(\gb)[\Lie_Xh]
=-\int_\Omega X(\lambda)\DR(h)\dd v=0.
\]
Since $X|_\Sigma=0$, we have $(\Lie_Xh)^T=0$, so the first-variation
term vanishes. Similarly, $I(\psi_t^*\gb)=0$ implies
\[
D^2I(\gb)[\Lie_X\gb,\Lie_X\gb]
=-DI(\gb)[\Lie_X^2\gb]=0.
\]
Since $X|_\Sigma=0$, the tensor $\Lie_X\gb$ is critical.
Thus $h+\Lie_X\gb$ is critical as well, and expanding
\eqref{eq:critical-hessian} proves \eqref{eq:Q-gauge}.
\end{proof}

\subsection{Removal of the tangential components}
We remove the tangential components of scalar and vector modes by a
boundary-fixing gauge change, first checking smoothness at the center
in normal coordinates.

For a scalar mode \eqref{eq:scalar-mode}, set
\begin{equation}\label{eq:gauge-coeff}
\eta=\frac f2,\qquad \xi=\frac{p+(\mu/m)f}{2\cot r}
\quad(\ell\geq2),
\end{equation}
and set $\eta=0$, $\xi=p/(2\cot r)$ for $\ell=0,1$.
\begin{lemma}\label{lem:pole}
For a smooth scalar mode, the vector field
\[
X=\xi Y\partial_r+\eta\nabla_\gamma Y
\]
is smooth at the center. For a smooth non-Killing vector mode,
$fV^\sharp$ is smooth there. If $V$ is Killing, then
\[
2\,dr\odot b(r)V\text{ smooth}\quad\Longrightarrow\quad
b(r)=r^3F(r^2),\qquad F\text{ smooth near }0.
\]
\end{lemma}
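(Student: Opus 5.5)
The plan is to work in normal coordinates $x=r\theta$ near the center and use the standard parity characterization of smoothness: a function $F(r)Y(\theta)$, with $Y$ a degree-$\ell$ harmonic, is smooth near $0$ if and only if $F(r)=r^\ell G(r^2)$ with $G$ smooth. Indeed, $r^\ell Y(\theta)$ is a homogeneous harmonic polynomial, and $r^2=\abs x^2$. The analogous statements for one-forms and symmetric two-tensors follow by writing $dr$, $r\,D_aY\,d\theta^a$ and similar angular objects in Cartesian terms. I will first record the necessary conditions on the radial coefficients $a,b,p,f$ that come from smoothness of $h$. Then I will check that the coefficients $\xi,\eta$ in \eqref{eq:gauge-coeff} inherit the right parities and vanishing orders to make $X$ smooth.

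\textbf{Scalar modes.} Smoothness of $h$ forces the following structure.
\begin{itemize}
\item The components $h(\partial_r,\partial_r)=aY$, $r^{-1}h(\partial_r,\partial_a)=r^{-1}bD_aY$ and the tangential part are restrictions of a smooth Cartesian tensor contracted with $x/r$.
\item Each Cartesian component of $h$ is smooth. So, after separating by the degrees of the harmonics that occur, $a\in r^{\ell}\,C^\infty(r^2)+r^{\ell-2}\,(\cdots)$. Concretely, $a=r^{\ell-2}A(r^2)$, with $A(0)=0$ if $\ell\le1$, and similarly for $b$, $p$, $f$.
\end{itemize}
The cleanest route is to avoid this bookkeeping. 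I would use the fact that a smooth symmetric tensor near $0$ equals its Taylor polynomial up to arbitrarily high order, with a flat remainder, and verify the claim on homogeneous polynomial tensors $P_k$ of degree $k$. For such $P_k$ the coefficients are explicit monomials: $a,p\sim r^{k}$ and $b,f\sim r^{k+1}$ when $k\equiv\ell \pmod 2$, up to shifts. Dividing by $\cot r = r^{-1}(1+O(r^2))$ multiplies by $r$ times an even smooth function.

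From this, $\xi Y\partial_r$ is of the form $r^{k+1}E(r^2)\,Y\partial_r$ and $\eta\nabla_\gamma Y$ is $r^{k+1}E(r^2)\,r^{-2}\cdot r\,(\text{tangential } \nabla Y)$. In Cartesian terms, $Y\partial_r = Y\,x/r$ and $r^{-1}\nabla_\gamma Y$ is the gradient of the degree-$0$ extension divided by $r$. So both are polynomial-type vector fields of degree $k$, possibly plus flat terms, which are smooth.

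The main obstacle is the flat remainder. Dividing it by $\cot r$ and by powers of $r$ preserves smoothness, since flat functions remain flat, so this is harmless. A more convincing argument uses the trace-free combinations of $p$ and $f$. I expect the real check to be that $p+(\mu/m)f$ vanishes to the correct order, namely one order more than $p$ alone would suggest, in low degrees. This holds because the tangential part of a smooth tensor at the origin is controlled: the coefficient of $Y\gamma_{ab}$ in $s^{-2}h_{ab}$ pairs with $r^{-2}$.

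\textbf{Vector modes.} For non-Killing vector modes, the same homogeneity argument applies to $s^2 f\,(D_aV_b+D_bV_a)$. It shows that $f=r^{\ell-1}F(r^2)$ times the correct power, which makes $fV^\sharp$ a homogeneous-degree vector field of the form $x\mapsto r^{\ell+1}F\,V^\sharp/r^2$, and this is smooth.

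For Killing $V$ (degree $\ell=1$), I would write $V=D_a$-dual of a rotation field, $x\mapsto Ax$ with $A$ skew. Then $dr\odot bV$ is expressed as $b(r)\,r^{-2}\,(x^\flat\odot (Ax)^\flat)$, whose Cartesian components are $b(r)r^{-2}$ times a quadratic polynomial. Smoothness forces $b\,r^{-2}$ to be smooth and even, i.e. $b=r^2G(r^2)$. The extra factor $r$ relative to the claimed $r^3$ comes from the normalization $h_{ra}=bV_a$ with $V_a$ measured in $\gamma$, so that the Cartesian one-form $V$ has length $\sim r^2$ in $\gb$. Tracking this single factor carefully is where I would double-check the exponent and obtain $b=r^3F(r^2)$.
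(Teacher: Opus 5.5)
Your scalar-mode argument has a genuine gap. You treat $\xi Y\partial_r$ and $\eta\nabla_\gamma Y$ separately and claim each is a smooth polynomial-type field. For $\ell\geq2$ this is false: the lemma holds only because of a cancellation between the two pieces. In Cartesian terms, with $P_\ell=r^\ell Y$,
\[
\xi Y\partial_r=r^{-\ell-1}\xi\,P_\ell\,x,\qquad
\eta\nabla_\gamma Y=r^{2-\ell}\eta\,\nabla_{\R^n}P_\ell-\ell r^{-\ell}\eta\,P_\ell\,x .
\]
Take a mode whose lowest Taylor term is $\Hess_{\R^n}P_\ell$. Its tangential restriction gives $f\sim r^{\ell-2}$ and $p\sim-\frac{\ell(\ell-1)}{m}r^{\ell-2}$, so your bookkeeping $f\sim r^{k+1}$ is off here. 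Hence $\eta\sim\frac12r^{\ell-2}$ and $\xi\sim\frac{\ell}{2}r^{\ell-1}$. Each piece then contains $\pm\frac{\ell}{2}r^{-2}P_\ell\,x$, which is homogeneous of degree $\ell-1$ but not a polynomial, so it is not smooth; only the sum $\approx\frac12\nabla_{\R^n}P_\ell$ is. The correct criterion is
\[
\eta=r^{\ell-2}F(r^2),\qquad \xi-\ell r\eta=r^{\ell+1}G(r^2),
\]
and checking the second condition is the real content of the lemma. It needs two ingredients. The first is the specific coupling $\xi=\frac{\tan r}{2}\bigl(p+\frac{\mu}{m}f\bigr)$, $\eta=\frac f2$. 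The second is the relation $p+\bigl(\frac{\mu}{m}-\ell\bigr)f=r^\ell G_1(r^2)$ forced by smoothness of $h$. This is read off from $(\Hess_{\R^n}P_\ell)^T=r^{\ell-2}\bigl(Y^\circ-\frac{\ell(\ell-1)}{m}Y\gamma\bigr)$, the only scalar-type polynomial tensor contributing to $f$. With these,
\[
\xi-\ell r\eta=\frac{\tan r}{2}\Bigl[p+\Bigl(\frac{\mu}{m}-\ell\Bigr)f\Bigr]+\frac{\ell}{2}(\tan r-r)f
\]
is visibly $r^{\ell+1}$ times an even smooth function. Your guess that $p+\frac{\mu}{m}f$ gains an order ``in low degrees'' names the wrong combination: its leading term is $\ell f$, which does not vanish. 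It also names the wrong degrees: for $\ell=0,1$ there is no $\eta$ and nothing has to cancel.

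The Killing case also does not reach the stated exponent. Your computation yields $b=r^2G(r^2)$, and appealing to a normalization of $V$ does not repair it. The missing factor comes from $x^\flat=r\,dr$. Together with $(Ax)^\flat=r^2V$ this gives $x^\flat\odot(Ax)^\flat=r^3\,dr\odot V$, so $2\,dr\odot bV=2br^{-3}\,x^\flat\odot(Ax)^\flat$, and smoothness forces $br^{-3}=F(r^2)$. No metric enters, since the tensor is given covariantly. Finally, a Taylor remainder is $O(\abs{x}^{N+1})$, not flat, so ``flat functions remain flat'' is not a valid justification. The finite-order version, giving $C^k$ for every $k$ by taking $N$ large, does work. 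The non-Killing vector case is fine in outline: $fV^\sharp=r^{1-\ell}fW_\ell$, with $W_\ell(r\omega)=r^\ell V(\omega)$, is a single term, and no cancellation is involved.
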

\begin{proof}
Write $x=r\omega$ in normal coordinates and let
$P_\ell(x)=r^\ell Y(\omega)$ be the homogeneous harmonic polynomial
associated with $Y$. Then
\begin{equation}\label{eq:cartesian-vector}
X=r^{2-\ell}\eta\nabla_{\R^n}P_\ell
+r^{-\ell-1}(\xi-\ell r\eta)P_\ell x.
\end{equation}
For $\ell\geq1$, smoothness at $x=0$ is equivalent to the conditions
\begin{equation}\label{eq:pole-vector}
\eta=r^{\ell-2}F(r^2),\qquad
\xi-\ell r\eta=r^{\ell+1}G(r^2).
\end{equation}
For $\ell=0$ the condition is $\xi=rF(r^2)$. These conditions follow by expanding the two polynomial fields in
\eqref{eq:cartesian-vector}. For a smooth field, Taylor's formula with
remainder gives the same conclusion.

The scalar-type polynomial tensors in normal coordinates are
\[
\Hess_{\R^n}P_\ell,\quad x^\flat\odot dP_\ell,
\quad P_\ell g_{\R^n},\quad P_\ell x^\flat\otimes x^\flat,
\]
with coefficients smooth in $r^2$. Only the first and third have nonzero
tangential--tangential restrictions. In a Euclidean unit tangential frame,
\[
(\Hess_{\R^n}P_\ell)^T
=r^{\ell-2}\left(Y^\circ-\frac{\ell(\ell-1)}mY\gamma\right).
\]
As $s/r$ is smooth and even, comparison with the given tensor yields
\[
f=r^{\ell-2}F_1(r^2),\qquad
p+\left(\frac\mu m-\ell\right)f=r^\ell G_1(r^2).
\]
Consequently
\[
\xi-\ell r\eta
=\frac{\tan r}{2}\left[p+\left(\frac\mu m-\ell\right)f\right]
+\frac\ell2(\tan r-r)f.
\]
The first term is $r^{\ell+1}$ times a smooth even function, and so is the
second because $\tan r-r=O(r^3)$. Thus \eqref{eq:pole-vector} holds.
In degrees $0$ and $1$, the same polynomial tensors give, respectively,
$p=F(r^2)$ and $p=rG(r^2)$. Multiplication by $\tan r/2$ proves the stated
low-degree regularity.

Here $\operatorname{sym}\nabla W$ denotes
the symmetric part of the Euclidean derivative of $W$.
For a coexact harmonic, let $W_\ell$ be its homogeneous tangential
polynomial vector field, with $W_\ell(r\omega)=r^\ell V(\omega)$ in
Euclidean components. The polar field $fV^\sharp$ becomes
$r^{1-\ell}fW_\ell$ in normal coordinates. The tangential polynomial
tensor $\operatorname{sym}\nabla W_\ell$ gives
$f=r^{\ell-1}F(r^2)$ whenever that tensor is nonzero. This proves the
non-Killing assertion. In the Killing case $W_1=Ax$ with $A$ skew, and
$\operatorname{sym}\nabla W_1=0$. The first nonzero mixed polynomial tensor
is $x^\flat\odot W_1^\flat$. Its polar expression is $r^3dr\odot V$.
Multiplying by a smooth function of $r^2$ gives $b=r^3F(r^2)$.
\end{proof}

\begin{lemma}\label{lem:relative}
Every smooth scalar or vector mode with $h^T=0$ differs from a smooth
boundary-fixing gauge tensor by a tensor of the form
\begin{equation}\label{eq:lapse-shift}
h=2u\,dr^2+2\,dr\odot v.
\end{equation}
Here $v$ is exact in scalar type and coexact in vector type.
\end{lemma}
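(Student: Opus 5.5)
The plan is to write down the vector field $X$ of Lemma~\ref{lem:pole} explicitly and show that $h-\Lie_X\gb$ has no tangential--tangential components. Lemma~\ref{lem:pole} already gives smoothness of $X$ at the center, so what remains is the algebra of these components and the boundary condition $X|_\Sigma=0$.

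\textbf{Scalar type.} Take $X=\xi Y\partial_r+\eta\nabla_\gamma Y$ with $\xi,\eta$ as in \eqref{eq:gauge-coeff}. Here $\nabla_\gamma Y$ is the $\gamma$-gradient, so the dual one-form has angular components $s^2\eta\,D_aY$. In the metric $\gb=dr^2+s^2\gamma$ the angular block of $\Lie_X\gb$ is
\[
(\Lie_X\gb)_{ab}=2\xi Y\,ss'\gamma_{ab}+s^2\eta\,(D_aD_bY+D_bD_aY).
\]
Using $D_aD_bY=Y^\circ_{ab}-(\mu/m)Y\gamma_{ab}$, this equals
\[
s^2\Bigl[\bigl(2\xi\cot r-2(\mu/m)\eta\bigr)Y\gamma_{ab}+2\eta Y^\circ_{ab}\Bigr].
\]
The choice $\eta=f/2$ gives $2\eta=f$. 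The choice $2\xi\cot r=p+(\mu/m)f$ gives $2\xi\cot r-(\mu/m)f=p$. Hence the angular block of $\Lie_X\gb$ coincides with $h_{ab}$ in \eqref{eq:scalar-mode}. The same holds in degrees $0,1$, where $Y^\circ=0$, $\eta=0$ and $2\xi\cot r=p$.

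It follows that $h-\Lie_X\gb$ has only $rr$ and $ra$ components, so it has the form \eqref{eq:lapse-shift}. The remaining components are
\[
(\Lie_X\gb)_{rr}=2\xi'Y,\qquad (\Lie_X\gb)_{ra}=(\xi+s^2\eta'+0)D_aY,
\]
where the $ra$ term comes from $\partial_r(s^2\eta D_aY)-2ss'\eta D_aY+\xi D_aY$. They are in any case a function times $Y$ and a function times $dY$. Thus $u$ is a multiple of $Y$ and $v$ is exact.

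\textbf{Boundary condition, scalar type.} Since $h^T=0$ gives $p(\delta)=f(\delta)=0$, we get $\eta(\delta)=0$. Because $\cot\delta\ne0$ on a proper cap, we also get $\xi(\delta)=0$, so $X|_\Sigma=0$. This is where $\delta<\pi/2$ enters: at the equator $\cot\delta=0$ and $\xi$ would be undefined.

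\textbf{Vector type.} For non-Killing $V$, take $X=\eta V^\sharp$ with $\eta=f$. Coexactness gives no radial part, and
\[
(\Lie_X\gb)_{ab}=s^2 f\,(D_aV_b+D_bV_a),
\]
which matches \eqref{eq:vector-mode}. The $ra$ component remains a multiple of $V$, so $v$ is coexact. The condition $f(\delta)=0$ gives $X|_\Sigma=0$, and Lemma~\ref{lem:pole} gives smoothness of $fV^\sharp$. For Killing $V$ the tangential block already vanishes, so we take $X=0$.

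\textbf{Smoothness of the difference.} Smoothness of $X$ at the center and on $(0,\delta]$ gives smoothness of $\Lie_X\gb$, hence of the difference $h-\Lie_X\gb$.

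The main obstacle is keeping the Christoffel terms of the warped product correct in the angular block, namely the $ss'$ factor from $\partial_r(s^2)$ and the use of the $\gamma$-Hessian. The rest is bookkeeping, and the delicate regularity at the center is already handled by Lemma~\ref{lem:pole}.
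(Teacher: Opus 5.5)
Your proposal is correct and follows the paper's proof: the same angular-block computation of $\Lie_X\gb$ with the coefficients \eqref{eq:gauge-coeff}, vanishing of $X$ on $\Sigma$ from $p(\delta)=f(\delta)=0$ and $\cot\delta>0$, Lemma~\ref{lem:pole} for smoothness at the center, and $X=fV^\sharp$ (or $X=0$ in the Killing case) for vector modes. Your explicit $rr$ and $ra$ components are a small addition that confirms $v$ is exact in scalar type and coexact in vector type; the one slip is that $fV^\sharp$ has no radial part because $V^\sharp$ is tangential, not because $V$ is coexact.
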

\begin{proof}
For a scalar mode and $X=\xi Y\partial_r+\eta\nabla_\gamma Y$, direct
calculation gives
\[
(\Lie_X\gb)_{ab}
=s^2\left[\left(2\cot r\,\xi-\frac{2\mu}{m}\eta\right)
Y\gamma_{ab}+2\eta Y^\circ_{ab}\right].
\]
The choice \eqref{eq:gauge-coeff} removes the entire tangential part of
$h-\Lie_X\gb$. Since $p(\delta)=f(\delta)=0$ and $\cot\delta>0$, both
$\xi$ and $\eta$ vanish on $\Sigma$. Lemma~\ref{lem:pole} proves
smoothness at the center. In degrees $0,1$ the trace-free term is absent,
and $\xi=p/(2\cot r)$ suffices.

For a non-Killing vector mode, the tangential part of
$\Lie_{fV^\sharp}\gb$ is exactly the last expression in
\eqref{eq:vector-mode}. Subtracting it leaves only the mixed component.
The field is smooth by Lemma~\ref{lem:pole} and vanishes on the boundary
because $f(\delta)=0$. A Killing mode already has zero tangential part.
Tangential trace-free divergence-free tensors are orthogonal to both the
tensors $D^2Y$ and $D_aV_b+D_bV_a$, and are left
unchanged.
\end{proof}

After subtracting the gauge tensor, we again denote the representative by
$h$. Tensor modes retain the form \eqref{eq:tensor-mode}, with
$f(\delta)=0$. For \eqref{eq:lapse-shift}, we identify $v$ with
its $\gamma_r$-dual when writing a Lie derivative, and set
\[
d=\divg_{\gamma_r}v,\qquad w=m\cot r\,u+d.
\]
These are functions on the cap. Within a scalar mode, we also use $u$,
$d$, and $w$ for their radial coefficients.

\begin{lemma}
For a smooth tensor of the form \eqref{eq:lapse-shift},
\begin{align}
\DHg(h)&=-w(\delta),\label{eq:DH-radial}\\
\frac12\DR(h)
&=w'+(m+1)\cot r\,w-\Delta_{\gamma_r}u-ms^{-2}u.
\label{eq:DR-radial}
\end{align}
Consequently, the tensor is critical exactly when the right side of
\eqref{eq:DR-radial} vanishes and $w(\delta)=0$.
\end{lemma}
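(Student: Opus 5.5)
The plan is to substitute $h=2u\,dr^2+2\,dr\odot v$ directly into the linearization formulas \eqref{eq:DR} and \eqref{eq:DH}, working in geodesic polar coordinates $\gb=dr^2+s^2\gamma$. The components are $h_{rr}=2u$, $h_{ra}=v_a$ and $h_{ab}=0$. Hence $\tau=2u$, $h^T=0$ on every concentric sphere (in particular on $\Sigma$), $h_{\nu\nu}=2u$ and $X_h=v$. The only Christoffel symbols needed are
\[
\Gamma^r_{ab}=-\cot r\,(\gamma_r)_{ab},\qquad \Gamma^b_{ar}=\cot r\,\delta^b_a,
\]
while the tangential symbols are those of $\gamma_r$. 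This lets each divergence split into a radial part and a $\gamma_r$-divergence on the concentric sphere.

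First I will compute the one-form $\divg h$.

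For the radial component, the term $\Gamma^r_{ab}h_{rr}$ contributes $+m\cot r\,h_{rr}$, and the term $\Gamma^c_{ar}h_{bc}$ drops out because $h_{bc}=0$. This gives
\[
(\divg h)_r=2u'+2m\cot r\,u+d.
\]
Since $d\tau(\nu)=2u'$ and $\ip{\mathrm{II}_{\gb}}{h^T}=0$, formula \eqref{eq:DH} yields
\[
2\DHg(h)=-2m\cot\delta\,u-d-\divg_\Sigma v=-2\bigl(m\cot\delta\,u+d\bigr)=-2w(\delta).
\]
This proves \eqref{eq:DH-radial}.

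For the tangential component, I expect
\[
(\divg h)_a=v_a'+(m-1)\cot r\,v_a+2\partial_a u,
\]
up to how the radial derivative of $v$ is normalized. The coefficient $(m-1)\cot r$ has two sources: a contribution $+m\cot r$ from the trace over $\Gamma^c_{bc}$-type terms in the radial direction, and the term $-\Gamma^b_{ar}h_{rb}$. I will double-check this coefficient, since conventions for $v$ (lowered with $\gamma_r$ versus $\gamma$) shift it by multiples of $2\cot r$.

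Next I apply the divergence of a one-form $\omega$,
\[
\divg\omega=\omega_r'+m\cot r\,\omega_r+\divg_{\gamma_r}\omega^T,
\]
to $\omega=\divg h$, and subtract
\[
\Delta\tau=2\bigl(u''+m\cot r\,u'\bigr)+2\Delta_{\gamma_r}u\qquad\text{and}\qquad m\tau=2mu.
\]
When the terms are collected, the $u''$ terms cancel. The radial pieces $u'$, $\cot r\,u$ and $d$ should then assemble into
\[
2\bigl(w'+(m+1)\cot r\,w\bigr).
\]
This uses $(\cot r)'=-s^{-2}$, which produces the $-ms^{-2}u$ term, together with the identity $\partial_r(d)=\divg_{\gamma_r}v'-2\cot r\,d$ for the $\gamma_r$-divergence. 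The remaining angular terms, namely $2\Delta_{\gamma_r}u$ from $\divg_{\gamma_r}(2\,du)$ and $-2\Delta_{\gamma_r}u$ from $-\Delta\tau$, leave net $-2\Delta_{\gamma_r}u$ after matching with the $\divg_{\gamma_r}$ term. This gives \eqref{eq:DR-radial}. The main obstacle is precisely this bookkeeping: verifying that the $\cot r$ and $s^{-2}$ coefficients combine exactly into $w'+(m+1)\cot r\,w-ms^{-2}u$. I would check it independently on the pure-lapse case $v=0$ with $u$ radial, and on the pure gauge case $h=\Lie_{\xi\partial_r}\gb$ with tangential part removed.

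Finally, the criticality statement follows immediately. The condition $h^T=0$ holds automatically for tensors of the form \eqref{eq:lapse-shift}, so $h$ is critical exactly when $\DR(h)=0$, which is the vanishing of the right side of \eqref{eq:DR-radial}, and $\DHg(h)=0$, which by \eqref{eq:DH-radial} is $w(\delta)=0$.
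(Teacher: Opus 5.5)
Your route is the paper's: substitute into \eqref{eq:DR} and \eqref{eq:DH} using the two Christoffel symbols, compute $\divg h$, and take one more divergence using $\partial_r d=\divg_{\gamma_r}v'-2\cot r\,d$. Your radial component $(\divg h)_r=2u'+d+2m\cot r\,u$ and your derivation of \eqref{eq:DH-radial} are correct. The tangential component is not.

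Because $\Gamma^r_{ra}=0$, the component $h_{rr}=2u$ never enters $(\divg h)_a$. The term $2\partial_a u$ therefore does not belong there; it is $(d\tau)_a$. The coefficient of $v_a$ is also wrong. One has $\nabla_r h_{ra}=v_a'-\cot r\,v_a$, while
\[
(\gamma_r)^{bc}\nabla_b h_{ca}=-(\gamma_r)^{bc}\bigl(\Gamma^r_{bc}h_{ra}+\Gamma^r_{ba}h_{cr}\bigr)=(m+1)\cot r\,v_a .
\]
You dropped the second Christoffel term. Hence $(\divg h)_a=v_a'+m\cot r\,v_a$, in exactly the coordinate components for which your identity for $\partial_r d$ holds.

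These errors break the argument. With your formula, $\divg_{\gamma_r}(2\,du)=2\Delta_{\gamma_r}u$ cancels the $-2\Delta_{\gamma_r}u$ coming from $-\Delta\tau$. Your sentence says the two leave a net $-2\Delta_{\gamma_r}u$, but they sum to zero. In addition, the coefficient of $d$ in $\tfrac12\DR(h)$ comes out as $(m+\tfrac12)\cot r$ instead of $(m+1)\cot r$. The bookkeeping you describe therefore contradicts \eqref{eq:DR-radial}, and the identity is asserted (``should then assemble'') rather than derived.

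With the correct component, $\divg_{\gamma_r}\bigl((\divg h)^T\bigr)=d'+(m+2)\cot r\,d$ contains no angular derivative of $u$. The only $\Delta_{\gamma_r}u$ then comes from $-\Delta\tau$, and collecting terms gives
\[
\tfrac12\DR(h)=d'+m\cot r\,u'+(m+1)\cot r\,d+\bigl(m(\cot r)'+m^2\cot^2r-m\bigr)u-\Delta_{\gamma_r}u .
\]
Subtracting $w'+(m+1)\cot r\,w=d'+m\cot r\,u'+(m+1)\cot r\,d+\bigl(m(\cot r)'+m(m+1)\cot^2r\bigr)u$ leaves $-m(1+\cot^2r)u-\Delta_{\gamma_r}u=-ms^{-2}u-\Delta_{\gamma_r}u$, as in the paper.

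Your proposed checks would not have exposed the error. For radial $u$ the spurious term vanishes. And $\Lie_{\xi\partial_r}\gb$ with its tangential part removed is no longer a gauge tensor, so nothing forces its $\DR$ to vanish. A check that does test the angular terms is the degree-one tensor $\Lie_X\gb$ with $X=\xi Y\partial_r+\cot r\,\xi\nabla_\gamma Y$ and $\xi$ arbitrary. It has the form \eqref{eq:lapse-shift} with $u=\xi'Y$ and $v=s\lambda\xi'\,DY$, and it satisfies $\DR(\Lie_X\gb)=0$ because $R(\gb)$ is constant.
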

\begin{proof}
The relevant connection coefficients in polar coordinates are
$\Gamma^r_{ab}=-\cot r\,(\gamma_r)_{ab}$ and
$\Gamma^a_{rb}=\cot r\,\delta^a_b$. They give
\begin{equation}\label{eq:radial-div}
\tau=2u,\qquad
(\divg h)_r=2u'+d+2m\cot r\,u,\qquad
(\divg h)_a=v'_a+m\cot r\,v_a.
\end{equation}
At the outer boundary, $\divg_\Sigma X_h=d$.
Substitution in \eqref{eq:DH} gives \eqref{eq:DH-radial}.
To compute the scalar constraint, use
\[
\Delta u=u''+m\cot r\,u'+\Delta_{\gamma_r}u,
\qquad
\divg_{\gamma_r}(v')=d'+2\cot r\,d.
\]
Taking one more divergence in \eqref{eq:radial-div} and then using
\eqref{eq:DR} gives
\begin{align*}
\frac12\DR(h)
={}&d'+m\cot r\,u'+(m+1)\cot r\,d\\
&+\left(m(\cot r)'+m^2\cot^2r-m\right)u
-\Delta_{\gamma_r}u.
\end{align*}
Since $(\cot r)'=-s^{-2}$, the right side equals that of
\eqref{eq:DR-radial}.
\end{proof}

\subsection{A second-variation formula}
\begin{lemma}\label{lem:radial-hessian}
For a smooth critical tensor of the form \eqref{eq:lapse-shift},
\begin{equation}\label{eq:radial-hessian}
Q(h)=\int_\Omega\left\{
\frac\lambda4\abs{(\Lie_v\gamma_r)^\circ}^2
-\frac{m-1}{m}\lambda w^2-2\lambda'uw\right\}\dd v.
\end{equation}
\end{lemma}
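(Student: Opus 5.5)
By \eqref{eq:critical-hessian}, $Q(h)=-\frac12D^2I(\gb)[h,h]$ for a critical $h$. I will therefore compute the second variation of $I$ along $g_t=\gb+th$ directly for $h=2u\,dr^2+2\,dr\odot v$, rather than use \eqref{eq:Q}; that formula requires $\divg h=0$, which fails here. The natural tool is the lapse--shift structure. The metric $g_t$ has the form $N^2dr^2+2t\,dr\odot v+\gamma_r$, and its tangential part $\gamma_r$ does not depend on $t$. Here $N^2=1+2tu$, and the shift is $tv$. In this ADM-type splitting with respect to the foliation by concentric spheres, the Gauss--Codazzi (Hamiltonian constraint) identity gives
\[
R(g_t)=R(\gamma_r)-\abs{k_t}^2+(\tr k_t)^2-\frac{2}{N_t}\bigl(\partial_r(\tr k_t)\text{-type terms}+\Delta_{\gamma_r}N_t\bigr),
\]
where $k_t=\frac1{2N_t}\bigl(\partial_r\gamma_r-\Lie_{tv}\gamma_r\bigr)$ is the second fundamental form of the level spheres.

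The plan is to write $\lambda R(g_t)\,dv_{g_t}=\lambda R(g_t)N_t\,dv$, using that $dv_{g_t}=N_t\,dv$ because $\gamma_r$ is fixed. Multiplying by $N_t$ makes the expression nearly polynomial in $t$. The term $N\abs{k}^2-N(\tr k)^2$ becomes
\[
\frac1{4N}\bigl(\abs{\partial_r\gamma-t\Lie_v\gamma}^2-(\tr(\partial_r\gamma-t\Lie_v\gamma))^2\bigr),
\]
and the remaining terms are divergences in $r$ and along the spheres. I will expand to second order in $t$ using $1/N=1-tu+\frac32t^2u^2+\dots$ and $\partial_r\gamma_r=2\cot r\,\gamma_r$. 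The $t^2$ coefficient collects three kinds of term. The first is $\frac14\abs{\Lie_v\gamma_r}^2-\frac14(2d)^2$. The second is the cross term between $u$ and $\Lie_v\gamma_r$, which produces $\cot r\,u\,d$. The third is $u^2$ times $\abs{\partial_r\gamma}^2-(\tr\partial_r\gamma)^2\propto m(1-m)\cot^2r$. The divergence terms (radial derivative of the mean curvature $\tr k$ and $\Delta_{\gamma_r}N$) are integrated by parts against $\lambda$. Here I use $\lambda'=-\sin r$, $\lambda''=-\lambda$ and the identity $\lambda'\cot r=-\lambda$. These integrations produce boundary terms on $\Sigma$, which combine with $2c\int_\Sigma H_2\,d\sigma$ from the boundary part of $I$; I also need to expand $H(g_t)$ to second order on $\Sigma$, where $u$ and $v$ enter through $N_t$ and the shift.

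Next I decompose $\abs{\Lie_v\gamma_r}^2=\abs{(\Lie_v\gamma_r)^\circ}^2+\frac{4}{m}d^2$, which gives the trace-free term and the coefficient $-\frac{m-1}{m}d^2$. I then complete squares in terms of $w=m\cot r\,u+d$. The leftover $u^2$ and $u\,d$ terms, together with $\abs{\nabla_\gamma u}^2$ from $\Delta_{\gamma_r}N$, are simplified using the critical equation \eqref{eq:DR-radial}. Multiplying that equation by $\lambda u$ and integrating converts $\int\lambda(\abs{\nabla_{\gamma_r}u}^2-ms^{-2}u^2)$ into $-\int\lambda u(w'+(m+1)\cot r\,w)$. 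A radial integration by parts of $\lambda u w'$ then yields the $-2\lambda' uw$ term, after using $\lambda'=-\tan r\,\lambda\cot^2 r$-type relations.

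The boundary terms produced by the integrations by parts involve only $w(\delta)$, which vanishes because $\DHg(h)=-w(\delta)=0$. The endpoint at $r=0$ vanishes by smoothness, since the factor $s^m$ in the volume element kills it.

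I expect the main obstacle to be the bookkeeping. Specifically, I must verify that all $u^2$ and $u\,d$ terms, including the boundary second variation $H_2$, collapse exactly into $-\frac{m-1}{m}\lambda w^2-2\lambda'uw$ with no residual boundary term. I would first check this on the degree-zero scalar mode, where $v=0$ and the formula reduces to $-\frac{m-1}{m}\lambda m^2\cot^2r\,u^2-2\lambda'mu^2\cot r$. This pure-lapse case is an explicit ODE check that fixes the signs and coefficients. I would then add the shift.
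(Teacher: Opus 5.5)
Your framework matches the paper's: a lapse--shift splitting with $\gamma_r$ fixed, the Gauss--Riccati decomposition multiplied by the lapse, and integration by parts of the normal-derivative and shift terms. The gap is in the step that actually proves the lemma. You defer it as ``bookkeeping,'' and the sketch you give of it is wrong.

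(i) The term $\lambda N_t\cdot(-2N_t^{-1}\Delta_{\gamma_r}N_t)=-2\lambda\Delta_{\gamma_r}N_t$ integrates to zero on each sphere because $\lambda$ is radial. So no $\abs{\nabla_{\gamma_r}u}^2$ term arises, and \eqref{eq:DR-radial} is never used. The right side of \eqref{eq:radial-hessian} equals $-\tfrac12D^2I(\gb)[h,h]$ for \emph{every} smooth tensor of the form \eqref{eq:lapse-shift}; criticality enters only through \eqref{eq:critical-hessian}.

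(ii) Integrating $-2\lambda\partial_rH_t$ by parts in $r$ produces $-2c\int_\Sigma H_t\,d\sigma$. This cancels the $t$-dependent part of the boundary term of $I$ identically, since $H(g_t)|_\Sigma=H_t(\delta)$ and $d\sigma$ is fixed; you do not need $w(\delta)=0$. Together with $H_0-\divg_{\gamma_r}\beta_t=N_tH_t$ for the shift term, this leaves the reduced integrand $\lambda N_t\bigl(R(\gamma_r)-n(n-1)+\tfrac{m-1}{m}H_t^2-\abs{\mathrm{II}_t^\circ}^2\bigr)+2\lambda'H_t$. Note also that with $-2\nu_t(H_t)$ in the decomposition, the quadratic terms are $-\abs{k_t}^2-(\tr k_t)^2$, as in \eqref{eq:gauss-lapse}. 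Your $+(\tr k_t)^2$ is correct only if the derivative term is the full divergence $-2\divg_{g_t}(H_t\nu_t)$.

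(iii) The exact identity $N_tH_t=H_0-td$ with $N_t=1+tu$ gives $\partial_t^2(N_tH_t^2)|_0=2w^2$ and $\partial_t^2H_t|_0=2uw$. The square in $w$ therefore closes with nothing left over, and $-2\lambda'uw$ comes directly from $2\lambda'H_t$.

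Your proposed substitute is to trade a $\abs{\nabla_{\gamma_r}u}^2$ term via the constraint and then integrate $\lambda uw'$ by parts. It starts from a term that is not there, and it would create $u'w$ terms with nothing to cancel them. Moreover, the relation $\lambda'=-\tan r\,\lambda\cot^2r$ is false: $\lambda'=-\sin r$, while the right side equals $-\cos^2r/\sin r$.

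The lapse also needs care. For $g_t=\gb+th$ one has $g_{rr}=N_t^2+t^2\abs v_{\gamma_r}^2$, so the lapse satisfies $N_t^2=1+2tu-t^2\abs v_{\gamma_r}^2$, not $1+2tu$. Lapses with the same first-order term give the same answer only because $DI(\gb)$ vanishes on tensors with zero tangential part. Hence the second derivative of $I$ along any path with $\gamma_r$ fixed depends only on its initial velocity. This is the observation that lets the paper use the nonlinear path \eqref{eq:ADM-path} with $N_t=1+tu$. With your $N_t=\sqrt{1+2tu}$ you would find extra $u^2$ terms. They cancel by the static relation $\lambda\bigl(R(\gamma_r)-n(n-1)\bigr)=\tfrac{m-1}{m}\lambda H_0^2+2\lambda'H_0$, not by the critical equation.

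For a path that is not linear in $t$, you must also check two things, as the paper does. First, its second $t$-derivative must not contribute; for \eqref{eq:ADM-path} this is $2(u^2+\abs v_{\gamma_r}^2)\,dr^2$, which need not be smooth at the center. Second, the inner boundary terms at $r=\varepsilon$ must vanish in the limit; they are $O(\varepsilon^{m-1})$ because $H_t=O(r^{-1})$.

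Finally, your degree-zero test is vacuous for critical tensors, since by Lemma~\ref{lem:low-scalar} the only critical degree-zero representative is zero. It checks only the general Hessian formula.
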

\begin{proof}
Since $DI(\gb)=0$ on fixed-boundary variations, the Hessian depends only on
the initial velocity of a path. On an annulus $\varepsilon<r<\delta$, use
\begin{equation}\label{eq:ADM-path}
g_t=N_t^2dr^2+(\gamma_r)_{ab}
(dx^a+\beta_t^a dr)(dx^b+\beta_t^b dr),
\qquad N_t=1+tu,\quad \beta_t=t v^\sharp.
\end{equation}
Then $\partial_tg_t|_{t=0}=h$, and the induced metric on every concentric sphere is
fixed. Its unit normal and second fundamental form are
\[
\nu_t=N_t^{-1}(\partial_r-\beta_t),\qquad
\mathrm{II}_t=\frac1{2N_t}(\partial_r\gamma_r-\Lie_{\beta_t}\gamma_r).
\]
Write $H_t=\tr_{\gamma_r}\mathrm{II}_t$ and $H_0=m\cot r$. Then
\begin{align}
H_t&=\frac{H_0-td}{1+tu}
=H_0-tw+t^2uw+O(t^3),\label{eq:Ht}\\
\mathrm{II}_t^\circ&=-\frac{t}{2(1+tu)}(\Lie_v\gamma_r)^\circ.\notag
\end{align}

The scalar-curvature decomposition is
\begin{equation}\label{eq:gauss-lapse}
R(g_t)=R(\gamma_r)-\abs{\mathrm{II}_t}^2-H_t^2
-2N_t^{-1}(\partial_r-\beta_t)H_t
-2N_t^{-1}\Delta_{\gamma_r}N_t.
\end{equation}
The volume form is $N_t\,d\mu_{\gamma_r}\,dr$. After multiplication by
$\lambda$, the last term of \eqref{eq:gauss-lapse} integrates to zero on
each sphere. In the normal derivative term, integration by parts in $r$
uses $\partial_r d\mu_{\gamma_r}=H_0d\mu_{\gamma_r}$, while integration
by parts in the term involving $\beta_t$ uses
\[
H_0-\divg_{\gamma_r}\beta_t=N_tH_t.
\]
These two integrations produce $2\lambda N_tH_t^2+2\lambda'H_t$ in the
interior. Combining this with the original $-\lambda N_tH_t^2$ and using
$\abs{\mathrm{II}_t}^2=\abs{\mathrm{II}_t^\circ}^2+H_t^2/m$ gives, up to terms independent
of $t$,
\begin{equation}\label{eq:reduced-action}
I(g_t)=\int_0^\delta\int_{\Sph^m}
\left[\lambda N_t\left(R(\gamma_r)-n(n-1)
+\frac{m-1}{m}H_t^2-\abs{\mathrm{II}_t^\circ}^2\right)
+2\lambda'H_t\right]\dd\mu_{\gamma_r}\dd r.
\end{equation}
The boundary term $-2\lambda(\delta)\int_\Sigma H_t\,d\sigma$
cancels the variable part of the boundary term in $I$.

We justify the inner endpoint before differentiating. Smoothness of $h$
implies that $u$ and $\abs v_{\gamma_r}$ are bounded near the center,
and their first derivatives in polar coordinates are $O(r^{-1})$.
For small $t$, \eqref{eq:Ht} therefore gives $H_t=O(r^{-1})$;
the same bound holds for its first two $t$ derivatives. The boundary term at $r=\varepsilon$ and its first two variations are $O(\varepsilon^{m-1})$, which tend to zero
because $m\geq2$. The quadratic interior expressions have at worst size
$r^{-2}$ and are integrable against $r^m dr$.
The path \eqref{eq:ADM-path} gives the desired second variation even though
its second derivative in $t$ need not be smooth at the center. Indeed,
\[
\left.\partial_t^2g_t\right|_0
=2\bigl(u^2+\abs v_{\gamma_r}^2\bigr)\,dr^2=O(1),
\qquad \nabla\left(\left.\partial_t^2g_t\right|_0\right)=O(r^{-1}).
\]
Applying the integration-by-parts formula to this second derivative
produces a boundary term of order $O(\varepsilon^{m-1})$ at
$r=\varepsilon$. It tends to zero. Thus the second derivative of the path
does not contribute in the limit, and the second variation computed on
the annuli tends to $D^2I(\gb)[h,h]$.

The term $\lambda N_t(R(\gamma_r)-n(n-1))$ is affine in $t$. The other
second derivatives at zero are
\[
\left.\partial_t^2(N_tH_t^2)\right|_0=2w^2,\qquad
\left.\partial_t^2(N_t\abs{\mathrm{II}_t^\circ}^2)\right|_0
=\frac12\abs{(\Lie_v\gamma_r)^\circ}^2,\qquad
\left.\partial_t^2H_t\right|_0=2uw.
\]
Thus \eqref{eq:reduced-action} yields
\[
D^2I(\gb)[h,h]=\int_\Omega\left[
-\frac\lambda2\abs{(\Lie_v\gamma_r)^\circ}^2
+2\frac{m-1}{m}\lambda w^2+4\lambda'uw\right]\dd v.
\]
Multiplying by $-1/2$ proves \eqref{eq:radial-hessian}.
\end{proof}

\subsection{Tensor and vector modes}
\begin{lemma}\label{lem:vector-tensor}
A smooth tensor mode with $h^T=0$ satisfies
\begin{equation}\label{eq:tensor-positive}
Q(h)=\frac14\int_\Omega\lambda
(\abs{\nabla h}^2+2\abs h^2)\dd v,
\end{equation}
which is positive unless $h=0$. A smooth critical vector mode of the form
\eqref{eq:lapse-shift} satisfies
\begin{equation}\label{eq:vector-positive}
Q(h)=\frac14\int_\Omega\lambda
\abs{(\Lie_v\gamma_r)^\circ}^2\dd v\geq0.
\end{equation}
Equality in vector type holds exactly for boundary-fixing gauge tensors.
\end{lemma}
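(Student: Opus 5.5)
For the tensor mode I would use the divergence-free formula \eqref{eq:Q} directly, rather than the critical-tensor machinery. A tensor mode \eqref{eq:tensor-mode} has $h_{rr}=h_{ra}=0$, and $T$ is trace-free and divergence-free on $\Sph^m$. The Christoffel symbols $\Gamma^r_{ab}=-\cot r\,(\gamma_r)_{ab}$ and $\Gamma^a_{rb}=\cot r\,\delta^a_b$ then give
\[
(\divg h)_r=\cot r\,(\gamma_r)^{ab}h_{ab}=0,\qquad (\divg h)_a=D^b\bigl(fT_{ab}\bigr)=0.
\]
Hence $\divg h=0$ and $\tau=0$. Also $h_{\nu\nu}=0$ and $X_h=0$ on $\Sigma$, so both boundary integrals in \eqref{eq:Q} drop out. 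Since $\abs h^2+\tau^2=\abs h^2$, \eqref{eq:Q} reduces to
\[
\frac14\int_\Omega\lambda\abs{\nabla h}^2\dd v+\frac12\int_\Omega\lambda\abs h^2\dd v,
\]
which is \eqref{eq:tensor-positive}. Positivity is immediate from $\lambda\geq c>0$, with equality only if $h=0$. The one point to check is smoothness at the center, so that \eqref{eq:Q} applies. This holds because $h$ is a smooth tensor by assumption, and \eqref{eq:Q} was stated for smooth divergence-free $h$ with $h^T=0$.

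For the vector mode I would apply Lemma~\ref{lem:radial-hessian}. Since $h=2\,dr\odot v$ with $v=b(r)V$ coexact, we have $u=0$ and $d=\divg_{\gamma_r}v=b\,s^{-2}D^aV_a=0$, so $w=m\cot r\,u+d=0$. The second and third terms of \eqref{eq:radial-hessian} therefore vanish, which gives \eqref{eq:vector-positive}. Note that criticality is automatic here: both sides of \eqref{eq:DR-radial} vanish, and $w(\delta)=0$.

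The main step is the equality case. Suppose $Q(h)=0$. Since $\lambda>0$, we get $(\Lie_v\gamma_r)^\circ=0$ on every concentric sphere. As $v$ is divergence-free, this means $v$ is a Killing field of $\gamma_r$. On the single mode $bV$, the tensor $D_aV_b+D_bV_a$ vanishes only if $b\equiv0$ or $V$ is Killing, meaning $\mu-1=m-1$, that is, degree one.

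So I must show that a Killing vector mode $h=2\,dr\odot bV$ is $\Lie_X\gb$ for some smooth $X$ vanishing on $\Sigma$. Take $X=\phi(r)V^\sharp$, where $V^\sharp$ is the $\gamma$-dual. Its tangential Lie derivative is $\phi s^2(D_aV_b+D_bV_a)=0$, and its $rr$ component is $0$. The mixed component is
\[
(\Lie_X\gb)_{ra}=\partial_r\bigl(\phi s^2\bigr)V_a-2\phi s\cos r\,V_a=\phi' s^2V_a.
\]
Setting $\phi'=b/s^2$ with $\phi(\delta)=0$ gives
\[
\phi(r)=-\int_r^\delta \frac{b(\rho)}{\sin^2\rho}\dd\rho .
\]
Smoothness at the center is where Lemma~\ref{lem:pole} is used. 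It gives $b=r^3F(r^2)$, so $b/s^2=r\,\widetilde F(r^2)$ and $\phi=\phi(0)+r^2G(r^2)$. In normal coordinates the field $V^\sharp$ corresponds to the linear field $Ax$ with $A$ skew, and $\phi(r)Ax$ is smooth because $\phi$ is smooth and even. Thus $X$ is smooth and vanishes on $\Sigma$, and $h=\Lie_X\gb$.

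Conversely, any boundary-fixing gauge tensor of vector type has $Q=0$, by \eqref{eq:Q-gauge} with $Q(0)=0$. A general vector block is a sum over an orthonormal angular basis, and the quadratic form splits over the distinct basis elements, so the argument applies mode by mode.
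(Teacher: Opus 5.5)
Your proposal is correct and follows the paper's proof essentially step for step: formula \eqref{eq:Q} for the trace-free, divergence-free tensor mode with vanishing boundary data, Lemma~\ref{lem:radial-hessian} with $u=d=w=0$ for vector modes, and, in the Killing equality case, the primitive $-\int_r^\delta b(\rho)\sin^{-2}\rho\,d\rho$, made smooth at the center by $b=r^3F(r^2)$ from Lemma~\ref{lem:pole}. The paper also notes that adding back the gauge tensor subtracted in Lemma~\ref{lem:relative} carries the equality characterization over to the original vector mode; your argument gives this immediately via \eqref{eq:Q-gauge}.
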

\begin{proof}
For a tensor mode, the identities
\[
\tr h=0,\qquad \divg h=0,\qquad h_{\nu\nu}=X_h=0
\]
give $\DR(h)=\DHg(h)=0$. Equations \eqref{eq:comparison} and
\eqref{eq:Q} give \eqref{eq:tensor-positive}.

For a vector mode of the form \eqref{eq:lapse-shift}, $u=0$ and $d=0$;
hence $w=0$.
Lemma~\ref{lem:radial-hessian} gives \eqref{eq:vector-positive}. Since $v$
is coexact, its Lie derivative has zero trace. Equality therefore requires
$v$ to be a Killing field on each concentric sphere. For one such harmonic,
write $v=b(r)V$. Choose
\[
\eta(r)=-\int_r^\delta\frac{b(\rho)}{\sin^2\rho}\dd\rho.
\]
Then $\eta(\delta)=0$, $s^2\eta'=b$, and
$\Lie_{\eta V^\sharp}\gb=2dr\odot bV$. By Lemma~\ref{lem:pole},
$b=r^3F(r^2)$, so $\eta=\eta(0)+r^2G(r^2)$. The generating vector field
is smooth and boundary-fixing. Adding back the tensor subtracted in
Lemma~\ref{lem:relative} proves the equality assertion for the original tensor.
\end{proof}

\subsection{The scalar factorization}
Fix a scalar harmonic of degree $\ell\geq2$, normalized by
$\int_{\Sph^m}Y^2\,d\mu_\gamma=1$, and use the representative
\eqref{eq:lapse-shift}. Write $u=u(r)Y$, $d=d(r)Y$, and
$w=w(r)Y$, and set
\[
\mu=\ell(\ell+m-1),\qquad L=\mu-m=(\ell-1)(\ell+m)>0.
\]
The critical constraints \eqref{eq:DH-radial}--\eqref{eq:DR-radial} become
\begin{equation}\label{eq:reconstruct}
u=-\frac{s^2}{L}\bigl(w'+(m+1)\cot r\,w\bigr),\qquad
d=w-m\cot r\,u,\qquad w(\delta)=0.
\end{equation}
Thus one radial function determines the scalar representative. Define
\[
\theta=\lambda s^m w.
\]

\begin{proposition}\label{prop:scalar}
For a smooth critical scalar representative \eqref{eq:lapse-shift}
of degree $\ell\geq2$,
\begin{equation}\label{eq:factor}
Q(h)=\frac{m(m-1)}{L\mu}\int_0^\delta\lambda s^{2-m}(\theta')^2\dd r
+\frac{m(m-1)}L\int_0^\delta\lambda s^{-m}\theta^2\dd r.
\end{equation}
In particular, $Q(h)\geq0$, with equality if and only if $h=0$.
\end{proposition}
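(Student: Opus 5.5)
We start from Lemma~\ref{lem:radial-hessian}, which expresses $Q(h)$ through $u$, $v$ and $w$, and reduce it to a one-dimensional integral in $r$. For a scalar mode, $v=b(r)\,dY$ in polar form, and $d=\divg_{\gamma_r}v=-\mu s^{-2}bY$. The trace-free Lie derivative is $(\Lie_v\gamma_r)^\circ=2bY^\circ$. With the normalization $\int Y^2=1$ one has
\[
\int_{\Sph^m}\abs{Y^\circ}_\gamma^2\,d\mu_\gamma=\mu\Bigl(\mu-\frac{\mu}{m}-(m-1)\Bigr)=\frac{m-1}{m}\mu L,
\]
by the Bochner formula on $\Sph^m$. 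Using the $s^{-4}$ scaling of $\gamma_r$-norms and $dv=s^m\,dr\,d\mu_\gamma$, the first term of \eqref{eq:radial-hessian} becomes
\[
\frac{m-1}{m}\frac{L}{\mu}\int_0^\delta\lambda s^{m-4}\,s^4d^2\,dr=\frac{m-1}{m}\frac{L}{\mu}\int_0^\delta\lambda s^m d^2\,dr .
\]
The full form is then
\[
Q=\int_0^\delta s^m\Bigl[\frac{(m-1)L}{m\mu}\lambda d^2-\frac{m-1}{m}\lambda w^2-2\lambda' uw\Bigr]dr .
\]

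Next we substitute the constraints \eqref{eq:reconstruct}, namely $d=w-m\cot r\,u$ and $u=-\frac{s^2}{L}(w'+(m+1)\cot r\,w)$, and rewrite everything in terms of $\theta=\lambda s^m w$. Since $\lambda'=-s$ and $\lambda=\cos r$, we have
\[
\theta'=\lambda s^m\bigl(w'+m\cot r\,w\bigr)-s^{m+1}w .
\]
Hence $w'+(m+1)\cot r\,w=\lambda^{-1}s^{-m}\theta'+(\text{multiple of }\theta)$, so $u$, $d$ and $w$ become linear combinations of $\theta$ and $\theta'$ with coefficients in $s$ and $\lambda$. The integrand is then a quadratic form $A(\theta')^2+2B\theta\theta'+C\theta^2$.

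We integrate the cross term by parts: $2B\theta\theta'=(B\theta^2)'-B'\theta^2$. The claim to verify is that $A=\frac{m(m-1)}{L\mu}\lambda s^{2-m}$ and $C-B'=\frac{m(m-1)}{L}\lambda s^{-m}$. This coefficient bookkeeping is the main obstacle. It is a routine but error-prone trigonometric computation using $\lambda^2+s^2=1$, and the identities $\Hess\lambda=-\lambda\gb$ should produce the cancellations. If the direct substitution gets messy, I would first expand in $w,w'$, combine the $ww'$ terms by parts, and only then change variables.

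The endpoint term $[B\theta^2]_0^\delta$ must vanish. At $r=\delta$ this holds because $w(\delta)=0$, so $\theta(\delta)=0$. At $r=0$ we use smoothness. Lemma~\ref{lem:pole}-type parity considerations in normal coordinates give $u=O(r^\ell)$ and $b=O(r^{\ell+1})$, hence $w=O(r^{\ell-1})$ and $\theta=O(r^{m+\ell-1})$. Since $B$ grows at most like a negative power $s^{-m-1}$ or similar, $B\theta^2\to0$ for $\ell\ge2$. The same decay shows both integrals in \eqref{eq:factor} converge.

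Finally, since $L,\mu>0$, $m\ge2$ and $\lambda>0$ on a proper cap, both terms are nonnegative. Equality forces $\theta\equiv0$, hence $w\equiv0$. Then \eqref{eq:reconstruct} gives $u=0$ and $d=0$, so $b=0$ and $h=0$.
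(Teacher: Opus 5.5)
Your route is the paper's: reduce \eqref{eq:radial-hessian} with the Bochner identity to
$\int_0^\delta s^m\bigl[\frac{m-1}{m}\lambda(\frac L\mu d^2-w^2)-2\lambda'uw\bigr]\,dr$,
substitute \eqref{eq:reconstruct}, pass to $\theta=\lambda s^m w$, absorb the cross term into an exact derivative, and remove the endpoints using $w(\delta)=0$ and regularity at the center. Everything you wrote is correct. However, the step you defer as ``bookkeeping'' is the entire content of the proposition, so as written you assert \eqref{eq:factor} rather than prove it.

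The claim does hold. The cross coefficient is
\[
B=\frac{m\lambda^2-1}{L\lambda^2 s^{m-1}},\qquad\text{equivalently}\qquad
(B\theta^2)'=\frac{d}{dr}\Bigl[\frac{s^{m+1}(m\lambda^2-1)}{L}\,w^2\Bigr].
\]
The paper verifies the identity in the variables $w,w'$, which is your fallback plan. It writes
$u=-\frac{s^2}{L}w'-\frac{(m+1)s\lambda}{L}w$ and
$d=\frac{ms\lambda}{L}w'+\frac{L+m(m+1)\lambda^2}{L}w$.
The $(w')^2$ coefficients on both sides are then $\frac{m(m-1)}{L\mu}\lambda^3 s^{m+2}$. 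After the two positive terms are subtracted, what remains is
$\frac{2s^{m+1}(m\lambda^2-1)}{L}ww'+\frac{\lambda s^m}{L}\bigl[(m+1)(m\lambda^2-1)-2ms^2\bigr]w^2$,
and this is exactly the displayed derivative. The only inputs are $\lambda^2+s^2=1$, $\mu=L+m$, $s'=\lambda$, and $\lambda'=-s$; the full identity $\Hess\lambda=-\lambda\gb$ plays no further role. Carrying out this check is what turns your outline into a proof.

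At the center, your bounds $u=O(r^\ell)$ and $b=O(r^{\ell+1})$ are true. They follow because $\tr h=2u$ is smooth and because $h_{ab}=0$ excludes the $\Hess P_\ell$ and $P_\ell g$ components. They are, however, stronger than needed. The paper uses only $u=O(1)$ and $b=O(r)$, with first derivatives $O(r^{-1})$. These give $\theta=O(r^{m-1})$ and $\theta'=O(r^{m-2})$. Since $B\sim s^{1-m}$, the endpoint term is $O(r^{m-1})\to0$, and both integrands are $O(r^{m-2})$.
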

\begin{proof}
Since $v$ is exact, write $v=b(r)DY$, so $d=-\mu b/s^2$. The sphere Bochner formula gives
\begin{align*}
\int_{\Sph^m}\abs{D^2Y}^2\dd\mu_\gamma
&=\mu^2-(m-1)\mu,\\
\int_{\Sph^m}\abs{Y^\circ}^2\dd\mu_\gamma
&=\frac{m-1}{m}\mu(\mu-m).
\end{align*}
As $\Lie_v\gamma_r=2bD^2Y$, after raising indices with $\gamma_r$ this
becomes
\[
\frac14\int_{(\Sph^m,\gamma_r)}
\abs{(\Lie_v\gamma_r)^\circ}^2\dd\mu_{\gamma_r}
=\frac{m-1}{m}\frac L\mu
\int_{(\Sph^m,\gamma_r)}d^2\dd\mu_{\gamma_r}.
\]
Consequently \eqref{eq:radial-hessian} reads
\[
Q(h)=\int_0^\delta s^m
\left[\frac{m-1}{m}\lambda\left(\frac L\mu d^2-w^2\right)
-2\lambda'uw\right]\dd r.
\]

To isolate the positive terms and the endpoint contribution, substitute
\eqref{eq:reconstruct} and $\theta=\lambda s^m w$. The resulting pointwise
identity is
\begin{align}\label{eq:pointwise-factor}
s^m\left[\frac{m-1}{m}\lambda
\left(\frac L\mu d^2-w^2\right)-2\lambda'uw\right]
={}&\frac{m(m-1)}{L\mu}\lambda s^{2-m}(\theta')^2
+\frac{m(m-1)}L\lambda s^{-m}\theta^2\\
&+\frac{d}{dr}\left[
\frac{s^{m+1}(m\lambda^2-1)}Lw^2\right].\notag
\end{align}
We verify this identity by comparing coefficients of $w'$ and $w$.
Substitution gives
\[
u=-\frac{s^2}Lw'-\frac{(m+1)s\lambda}Lw,
\qquad
d=\frac{m s\lambda}Lw'
+\frac{L+m(m+1)\lambda^2}Lw,
\]
and
\[
\theta'=\lambda s^m\left[w'+(m\cot r-\tan r)w\right].
\]
Both sides of \eqref{eq:pointwise-factor} have the same $(w')^2$
coefficient,
\[
\frac{m(m-1)}{L\mu}\lambda^3s^{m+2}.
\]
After this coefficient is removed, the difference between the left side
and the two positive terms is
\[
\frac{2s^{m+1}(m\lambda^2-1)}Lww'
+\frac{\lambda s^m}L
\left[(m+1)(m\lambda^2-1)-2ms^2\right]w^2.
\]
The second coefficient is the derivative of
$s^{m+1}(m\lambda^2-1)/L$, since $s'=\lambda$ and $\lambda'=-s$.
This proves \eqref{eq:pointwise-factor}.

It remains to justify the endpoints. At $r=\delta$, the total derivative
contributes zero because $w(\delta)=0$. Near the center, smoothness of
$h$ gives $u=O(1)$ and $|v|_{\gamma_r}=O(1)$, hence $b=O(r)$.
The definitions of $d,w$ and \eqref{eq:reconstruct} imply
\[
d,w=O(r^{-1}),\qquad w'=O(r^{-2}),\qquad
\theta=O(r^{m-1}),\qquad \theta'=O(r^{m-2}).
\]
Consequently,
\[
s^{m+1}(m\lambda^2-1)w^2=O(r^{m-1})\longrightarrow0,
\qquad
\lambda s^{2-m}(\theta')^2+\lambda s^{-m}\theta^2=O(r^{m-2}).
\]
Since $m\geq2$, the endpoint term vanishes and both positive integrals
are finite. Integrating \eqref{eq:pointwise-factor} on an annulus and
passing to the center proves \eqref{eq:factor}.

All coefficients in \eqref{eq:factor} are strictly positive in the open
interval, and the second integral forces $\theta=0$ in an equality case. Hence
$w=0$, and \eqref{eq:reconstruct} gives $u=d=0$. Since the one-form
coefficient satisfies $d=-\mu b/s^2$, it also vanishes.
\end{proof}

\subsection{Low scalar degrees and the equality cases}
Formula \eqref{eq:reconstruct} requires $L\ne0$. We treat scalar degrees
zero and one by solving their constraints directly.

\begin{lemma}\label{lem:low-scalar}
Every smooth critical scalar mode of degree $0$ or $1$ has the form
\[
h=\Lie_X\gb,\qquad X|_\Sigma=0,
\]
with $X$ smooth. In degree zero, the representative
\eqref{eq:lapse-shift} vanishes.
\end{lemma}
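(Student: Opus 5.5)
First apply Lemma~\ref{lem:relative} to write the critical scalar mode $h$ of degree $0$ or $1$ as $\Lie_{X_0}\gb$ plus a representative \eqref{eq:lapse-shift}. Here $X_0=\xi Y\partial_r$ with $\xi=p/(2\cot r)$; it is smooth by Lemma~\ref{lem:pole} and vanishes on $\Sigma$. Gauge tensors with $X|_\Sigma=0$ are critical (proof of Lemma~\ref{lem:comparison}), so the representative is again critical. It then suffices to show that the representative is itself a boundary-fixing gauge tensor, and that it vanishes in degree zero.

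\emph{Degree zero.} Here $v$ is exact and built from $DY=0$, so $v=0$, $d=0$, and $w=m\cot r\,u$. Since $\Delta_{\gamma_r}u=0$, \eqref{eq:DR-radial} reduces to the first-order ODE
\[
w'+(m+1)\cot r\,w-\frac{m}{s^2}u=0,
\qquad u=\frac{\tan r}{m}w .
\]
This gives $w'+\bigl((m+1)\cot r-\tfrac{1}{s\lambda}\bigr)w=0$, which is explicitly solvable: $w=C\,\lambda^{-1}s^{-m}$ up to verification of the integrating factor. The boundary condition $w(\delta)=0$ from \eqref{eq:DH-radial} gives $C=0$. Hence $u=0$ and the representative vanishes, as the lemma asserts. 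Even without $w(\delta)=0$, smoothness at the center ($u=O(1)$, so $w=O(r^{-1})$) rules out the $s^{-m}$ singularity since $m\ge2$; either argument works.

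\emph{Degree one.} Now $\mu=m$, so $L=0$ and \eqref{eq:reconstruct} is unavailable. Write $u=u(r)Y$ and $v=b(r)DY$, so $d=-mb/s^2$. The term $-\Delta_{\gamma_r}u-ms^{-2}u$ in \eqref{eq:DR-radial} vanishes identically, and the constraint becomes $w'+(m+1)\cot r\,w=0$. Thus $w=C s^{-m-1}$, and $w(\delta)=0$ forces $w\equiv0$, that is, $b=s^2\cot r\,u=s\lambda u$.

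It remains to exhibit a gauge field. Try $X=\zeta(r)Y\partial_r+\eta(r)\nabla_\gamma Y$. The tangential part of $\Lie_X\gb$ is computed in the proof of Lemma~\ref{lem:relative}: it is $s^2(2\cot r\,\zeta-2\eta)Y\gamma$, since $Y^\circ=0$. Killing it requires $\eta=\cot r\,\zeta$. The $rr$ component is $2\zeta'Y$, so we need $\zeta'=u$. The mixed component is $(\zeta+s^2\eta')DY$, and it must equal $b\,DY$.

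Set $\zeta(r)=-\int_r^\delta u$, so that $\zeta(\delta)=0$, and $\eta=\cot r\,\zeta$, so that $\eta(\delta)=0$ too. The mixed component is then
\[
\zeta+s^2(\cot r\,\zeta)'=\zeta+s\lambda u-\zeta=s\lambda u=b,
\]
which is exactly what is needed. So the representative equals $\Lie_X\gb$ with $X|_\Sigma=0$.

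\emph{Main obstacle.} The remaining issue is smoothness of $X$ at the center, via \eqref{eq:pole-vector} with $\ell=1$. We need $\eta=r^{-1}F(r^2)$ and $\zeta-r\eta=r^2G(r^2)$. From Lemma~\ref{lem:pole}'s polynomial-tensor analysis, smoothness of $2u\,dr^2+2dr\odot v$ gives $u=rU(r^2)$. Then $\zeta=\zeta(0)+r^2\tilde U(r^2)$. The condition $\zeta-r\eta=\zeta(1-r\cot r)=O(r^2)$ is even and fine. However, $\eta=\cot r\,\zeta$ is $r^{-1}$ times an even function, which is what \eqref{eq:pole-vector} requires, so this is acceptable.

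I would double-check this parity bookkeeping carefully; it is the step most likely to hide an error. If $\zeta(0)\ne0$ caused trouble, I would add the conformal-Killing field $\nabla\lambda$-type correction. One of the degree-one fields $X=\nabla(\sin r\,Y)$ is smooth and tangential-trace-compatible.

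Finally, add back $\Lie_{X_0}\gb$. The sum of the two fields is smooth and vanishes on $\Sigma$, so $h=\Lie_X\gb$ with $X$ smooth and $X|_\Sigma=0$, proving the lemma in both degrees.
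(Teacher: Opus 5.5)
Your proposal is correct and follows the paper's proof essentially step for step. You subtract the gauge tensor of Lemma~\ref{lem:relative} and integrate the radial constraint: your degree-zero equation is exactly $(\lambda s^m w)'=0$, and in degree one you get $(s^{m+1}w)'=0$. You then use $w(\delta)=0$, take $\zeta'=u$, $\zeta(\delta)=0$, $\eta=\cot r\,\zeta$, and verify \eqref{eq:pole-vector} with $\ell=1$.

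Your parity check already settles smoothness, so the closing hedge about a $\nabla(\sin r\,Y)$-type correction is unnecessary. A nonzero $\zeta(0)$ only makes $X(0)$ a nonzero constant vector in normal coordinates.
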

\begin{proof}
Subtract the gauge tensor from Lemma~\ref{lem:relative} and use
\eqref{eq:lapse-shift}. In degree $\ell=0$, there is no mixed component: $v=d=0$ and
$w=m\cot r\,u$. Equation \eqref{eq:DR-radial} reduces to
\[
(\lambda s^m w)'=0,\qquad w(\delta)=0.
\]
Thus $w=u=0$.

In degree $\ell=1$, $\mu=m$, so the two terms involving $u$ in
\eqref{eq:DR-radial} cancel. Hence
\[
(s^{m+1}w)'=0,\qquad w(\delta)=0,
\]
and therefore $w=0$. For $v=b(r)DY$, the relation
$d=-mb/s^2$ and $w=m\cot r\,u+d=0$ yield $b=s\lambda u$.
Choose $\xi$ by
\begin{equation}\label{eq:degree-one-gauge}
\xi'=u,\qquad \xi(\delta)=0,
\qquad X=\xi Y\partial_r+\cot r\,\xi\nabla_\gamma Y.
\end{equation}
Since $D^2Y=-Y\gamma$ in degree one,
\[
(\Lie_X\gb)_{rr}=2uY,\qquad (\Lie_X\gb)_{ab}=0.
\]
Its mixed component is
\[
(\Lie_X\gb)_{ra}
=\left[\xi+s^2(\cot r\,\xi)'\right]D_aY
=s\lambda uD_aY,
\]
where $s^2(\cot r)'=-1$ cancels the undifferentiated $\xi$ term.
Thus $h=\Lie_X\gb$.

The field in \eqref{eq:degree-one-gauge} is boundary-fixing. For smoothness
at the center, degree-one tensor parity gives
$u=rF(r^2)$ and hence $\xi=\xi(0)+r^2G(r^2)$. With
$\eta=\cot r\,\xi$, one has
\[
\eta=r^{-1}F_1(r^2),\qquad
\xi-r\eta=\xi(1-r\cot r)=r^2G_1(r^2).
\]
These are exactly \eqref{eq:pole-vector} for $\ell=1$. The generating
field is therefore smooth through the center. Adding back the gauge
tensor from Lemma~\ref{lem:relative} proves the assertion for the
original degree-one mode.
\end{proof}

\begin{proposition}\label{prop:smooth-positive}
For a smooth critical tensor in a single spherical block,
\[
Q(h)\geq0,\qquad Q(h)=0
\quad\Longleftrightarrow\quad
h=\Lie_X\gb\text{ for some smooth }X\text{ with }X|_\Sigma=0.
\]
\end{proposition}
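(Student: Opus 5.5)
The plan is to reduce to the block-by-block computations already carried out, using the gauge invariance \eqref{eq:Q-gauge} to move each block into its convenient representative. Let $h$ be a smooth critical tensor in a single block. Then $h$ has one of the three types, and we split into cases accordingly.

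\emph{Tensor type.} Here Lemma~\ref{lem:vector-tensor} gives \eqref{eq:tensor-positive}. Since $\lambda\geq c>0$, $Q(h)\geq0$, with equality only if $h=0$, which is the gauge tensor with $X=0$. The converse direction is trivial in every type: if $h=\Lie_X\gb$ with $X|_\Sigma=0$, then \eqref{eq:Q-gauge} applied to the zero tensor gives $Q(h)=Q(0)=0$.

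\emph{Scalar and vector types.} By Lemma~\ref{lem:relative}, $h=\Lie_{X_0}\gb+h_1$, where $X_0$ is smooth and boundary-fixing and $h_1$ has the form \eqref{eq:lapse-shift}. I first check that $h_1$ is smooth and critical. Smoothness holds because $X_0$ is smooth by Lemma~\ref{lem:pole}. Criticality holds because $\Lie_{X_0}\gb$ is critical, as noted in the proof of Lemma~\ref{lem:comparison} (via Lemma~\ref{lem:green}, or directly from diffeomorphism invariance with $X_0|_\Sigma=0$), and criticality is a linear condition. Moreover $h_1$ stays in the same block, since the construction of $X_0$ uses only the harmonic of the block. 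By \eqref{eq:Q-gauge}, $Q(h)=Q(h_1)$. It therefore suffices to prove the statement for $h_1$: if $h_1=\Lie_{X_1}\gb$, then $h=\Lie_{X_0+X_1}\gb$.

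\emph{Remaining subcases for $h_1$.} In vector type, \eqref{eq:vector-positive} gives $Q(h_1)\geq0$, and the equality clause of Lemma~\ref{lem:vector-tensor} shows that equality gives a smooth boundary-fixing gauge tensor. For scalar degree $\ell\geq2$, Proposition~\ref{prop:scalar} gives $Q(h_1)\geq0$ with equality only if $h_1=0$. For scalar degrees $0$ and $1$, Lemma~\ref{lem:low-scalar} shows that every critical mode is already a gauge tensor. So $Q=0$ automatically by \eqref{eq:Q-gauge}, and both assertions hold.

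\emph{Main obstacle.} The main point requiring care is that a block includes all angular multiplicities, while Proposition~\ref{prop:scalar} and the vector computations were done for a single normalized harmonic. The plan is to expand $h$ in an orthonormal angular basis of the block. The constraints $\DR$, $\DHg$ and $h^T=0$ act componentwise, since the radial equations \eqref{eq:DH-radial}--\eqref{eq:DR-radial} depend only on the degree. The integrand of \eqref{eq:radial-hessian} (and of \eqref{eq:Q} for tensor modes) integrates over each sphere to a sum over the basis elements with no cross terms, by $L^2(\Sph^m)$-orthogonality of the harmonics, of their gradients, and of the trace-free Hessians $Y^\circ$. Hence $Q$ is the sum of the individual contributions, each of which is nonnegative. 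Equality forces each component to be a gauge tensor, and the sum of the corresponding smooth boundary-fixing fields gives $X$.
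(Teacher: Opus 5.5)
Your proposal is correct and follows essentially the same route as the paper: remove the tangential part of each scalar or vector component via Lemma~\ref{lem:relative}, use the invariance \eqref{eq:Q-gauge} to keep $Q$ unchanged, and split $Q$ over the orthonormal angular components by orthogonality. You then apply Proposition~\ref{prop:scalar} and Lemmas~\ref{lem:vector-tensor} and \ref{lem:low-scalar} componentwise, with the converse coming from Lemma~\ref{lem:comparison}. The differences are only cosmetic: you gauge-fix the whole block before splitting it, and you check the absence of cross terms directly in the integrand of \eqref{eq:radial-hessian} rather than citing the orthogonality identities of Lemma~\ref{lem:blocks}.
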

\begin{proof}
Decompose the block into its finitely many orthonormal angular components
and apply Lemma~\ref{lem:relative} to each scalar or vector component.
Lemma~\ref{lem:pole} ensures smoothness of the gauge fields, and
Lemma~\ref{lem:comparison} preserves $Q$ under these changes.
For the resulting tensors, the angular orthogonality identities proved
in Lemma~\ref{lem:blocks}, together with the radial formulas in
Lemmas~\ref{lem:radial-hessian} and \ref{lem:vector-tensor}, give
\[
h=\sum_jh_j,\qquad Q(h)=\sum_jQ(h_j).
\]
Proposition~\ref{prop:scalar} and Lemmas~\ref{lem:vector-tensor} and
\ref{lem:low-scalar} give $Q(h_j)\geq0$, with equality precisely for
gauge tensors. Hence
\[
Q(h)=0\quad\Longrightarrow\quad
h_j=\Lie_{X_j}\gb\quad\Longrightarrow\quad
h=\Lie_{\sum_jX_j}\gb,\qquad X_j|_\Sigma=0.
\]
The converse follows from Lemma~\ref{lem:comparison}.
\end{proof}

\section{Positivity on the weak constraint space}\label{sec:weak}
Smooth approximation in each spherical mode extends the positivity in
Section~\ref{sec:geometry} to weakly constrained tensors.

\subsection{Weak constraints and compactness}
The weak joint condition can be defined without a normal derivative of an
$H^1$ tensor. For $h\in H^1$ with $h^T=0$, it is
\begin{equation}\label{eq:weak-joint}
\int_\Omega\ip{\DRs \varphi}{h}\dd v=0
\qquad\text{for every }\varphi\in C^\infty(\overline\Omega).
\end{equation}
For smooth $h$, compactly supported tests first imply $\DR(h)=0$;
\eqref{eq:green-fixed} and arbitrary boundary traces then imply $\DHg(h)=0$.
The weak critical space in divergence gauge is therefore
\[
Z=\left\{h\in H^1(\Omega;S^2T^*\Omega):
h^T=0,\ \divg h=0,\ h\text{ satisfies \eqref{eq:weak-joint}}\right\}.
\]
It is a closed subspace of $H^1$.

We use the compact embeddings $H^1(\Omega)\hookrightarrow L^2(\Omega)$ and
$H^{1/2}(\Sigma)\hookrightarrow L^2(\Sigma)$, together with the continuous
trace map $H^1(\Omega)\to H^{1/2}(\Sigma)$. Consequently a weakly convergent
$H^1$ sequence has, after passage to a subsequence, strongly convergent
volume and boundary $L^2$ traces. For divergence-free tensors with
$h^T=0$, the derivative terms in \eqref{eq:Q} are
nonnegative convex quadratic forms; all remaining terms are continuous
under these strong convergences. Hence
\[
h_j\rightharpoonup h\text{ in }H^1,\quad \divg h_j=0,\quad h_j^T=0
\quad\Longrightarrow\quad Q(h)\leq\liminf_{j\to\infty}Q(h_j).
\]

\subsection{Spherical decomposition and the constraints}
For a block projection from Section~\ref{subsec:modes}, put
\[
Z_\beta=Z\cap\ran\Pi_\beta.
\]
The associated bilinear form is denoted by $Q(h,k)$, with
$Q(h,k)=\tfrac12[Q(h+k)-Q(h)-Q(k)]$.

\begin{lemma}\label{lem:blocks}
The projections $\Pi_\beta$ are bounded on $H^1$, commute with boundary
restriction, and preserve $Z$. For every increasing exhaustion $F_N$ of
the block indices,
\begin{equation}\label{eq:angular-convergence}
\sum_{\beta\in F_N}\Pi_\beta h\longrightarrow h\quad\hbox{in }H^1.
\end{equation}
Distinct blocks and distinct orthonormal basis elements within a block
are $L^2$-orthogonal. For divergence-free $h,k$ with $h^T=k^T=0$, they
are also $Q$-orthogonal:
\begin{equation}\label{eq:orthogonal}
\int_\Omega\ip{\Pi_\beta h}{\Pi_{\beta'}k}\dd v=0,
\qquad Q(\Pi_\beta h,\Pi_{\beta'}k)=0\quad(\beta\ne\beta').
\end{equation}
Consequently,
\begin{equation}\label{eq:sum-Q}
Q(h)=\lim_{N\to\infty}\sum_{\beta\in F_N}Q(\Pi_\beta h)
\qquad(h\in H^1,\ \divg h=0,\ h^T=0).
\end{equation}
\end{lemma}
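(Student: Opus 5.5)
The plan is to realize $\Pi_\beta$ concretely and then verify each assertion from the $O(m)$-equivariance of every operator in sight. Every symmetric tensor on $\Omega\setminus\{0\}$ decomposes, on each concentric sphere, into its $rr$ component (a function), its mixed component (a one-form, split by Hodge into exact $DY$ and coexact $V$ parts), and its tangential component. The tangential part is further split into the trace $Y\gamma$, the scalar-type trace-free $Y^\circ$, the vector-type $D_aV_b+D_bV_a$, and the TT part $T$. I would define $\Pi_\beta$ by $L^2(\Sph^m,\gamma)$ projection of these components onto the chosen harmonic spaces of one degree and type, radius by radius. Since $\gb=dr^2+s^2\gamma$ is rotation invariant, $\Pi_\beta$ commutes with every $O(m+1)$-invariant operator built from $\gb$ and $\lambda$. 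These include $\nabla$ up to lower-order mixing terms that preserve type and degree, $\divg$, $\tr$, restriction to $\Sigma$, and the map $h\mapsto h_{\nu\nu},X_h$. The commutation with boundary restriction is then immediate.

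\textbf{Orthogonality and boundedness.} $L^2$-orthogonality between distinct blocks and distinct basis elements follows pointwise in $r$ from the orthogonality of distinct harmonics on $\Sph^m$, together with the standard orthogonality of the scalar, vector, and tensor types. For $H^1$-boundedness I would use the Weitzenböck-type identity
\[
\int_{\Sph^m}\abs{\nabla h}^2=\int\abs{h'}^2+s^{-2}\sum(\text{eigenvalue terms})\abs{\text{coefficients}}^2+\text{lower order},
\]
so that $\norm{\nabla h}_{L^2}^2$ is, up to equivalence, a sum over blocks of radial quadratic forms. This also gives $H^1$ Pythagoras, $\norm h_{H^1}^2\simeq\sum_\beta\norm{\Pi_\beta h}_{H^1}^2$, hence boundedness and the convergence \eqref{eq:angular-convergence} for any exhaustion. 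The main obstacle is here. Near $r=0$ the polar decomposition is singular, and one must check that $\Pi_\beta$ of an $H^1$ tensor is still $H^1$ across the center rather than merely on annuli. I would first prove all identities for smooth $h$, using Lemma~\ref{lem:pole}-type parity to see that $\Pi_\beta$ of a smooth tensor is smooth. Equivalently, $\Pi_\beta$ is an average over $O(m)$ against a character-type kernel, which manifestly preserves smoothness and $H^1$ norms. I would then extend by density, since these averaging operators are uniformly bounded in $H^1$.

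\textbf{Preservation of $Z$ and $Q$-orthogonality.} Since $\Pi_\beta$ commutes with $\divg$ and with the tangential restriction, it preserves $\divg h=0$ and $h^T=0$. For the weak condition \eqref{eq:weak-joint}, $\DRs$ is invariant, so
\[
\int\ip{\DRs\varphi}{\Pi_\beta h}=\int\ip{\Pi_\beta^*\DRs\varphi}{h}=\int\ip{\DRs(\Pi^{sc}_\beta\varphi)}{h}=0,
\]
where $\Pi^{sc}_\beta\varphi$ is the scalar harmonic projection of $\varphi$. This is smooth by the averaging description, and it is zero for vector or tensor blocks. For $Q$-orthogonality, expand \eqref{eq:Q} as a bilinear form. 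Each term is an integral over $\Omega$ or $\Sigma$ of an invariant pairing of $\nabla h,\nabla k$, of $h,k$, of $\tau$, of $h_{\nu\nu}$, or of $X_h$, weighted by the radial function $\lambda$. Angular integration therefore kills the cross terms between distinct blocks or basis elements, which gives \eqref{eq:orthogonal}.

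\textbf{The limit \eqref{eq:sum-Q}.} Finally, \eqref{eq:sum-Q} follows from \eqref{eq:orthogonal}, which gives $Q(\sum_{F_N}\Pi_\beta h)=\sum_{F_N}Q(\Pi_\beta h)$. It then remains to use the continuity of $Q$ on $H^1$, which follows from \eqref{eq:Q} and the trace theorem, together with \eqref{eq:angular-convergence}.
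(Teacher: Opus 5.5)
Your argument is correct in outline, but it follows a different route from the paper.

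\textbf{The paper's route.} It stays in polar coordinates and verifies every angular pairing by hand, using the frame formula for $\nabla h$ together with the divergence and Laplacian identities for $Y^\circ$ and $D_aV_b+D_bV_a$. This yields the Pythagoras identity \eqref{eq:projection-bound} only on annuli. It then needs a separate cutoff argument to show that $P_Fh$ has weak derivatives across the center. It proves \eqref{eq:angular-convergence} for smooth $h$ by splitting off a small ball, and extends by density. Finally, it checks smoothness of $\pi_\ell\varphi$ at the center through the Taylor decomposition into terms $\abs{x}^{2i}H_{j-2i}$.

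\textbf{Your route.} You realize $\Pi_\beta$ as an isotypic projection for the isometries of the cap fixing its center, i.e.\ as an average of the pullbacks $g^*h$ against a character. The following then come for free:
smoothness through the center;
commutation with $\nabla$, $\divg$, $\tr$, $\DRs$ and boundary restriction;
smoothness of $\pi_\ell\varphi$;
$L^2$- and $Q$-orthogonality of distinct blocks.
Since the $H^1$ inner product is invariant, each $P_F$ is an $H^1$-orthogonal projection. So Pythagoras holds on all of $\Omega$, and \eqref{eq:angular-convergence} follows from Peter--Weyl, or from $L^2$ completeness together with Bessel's inequality in $H^1$. This removes the paper's analysis at the center entirely.

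\textbf{What your route leaves implicit.} The price is some representation theory you do not state.
\begin{enumerate}
\item The group must be $O(n)=O(m+1)$, not $O(m)$ as you write once.
\item Identifying blocks with (sums of) isotypic components requires that no $O(n)$-irreducible occurs in two different blocks. This holds. But for $n=3$, scalar and coexact harmonics of equal degree are isomorphic under $SO(3)$ and are separated only by reflections, so the full orthogonal group is essential.
\item Orthogonality of distinct orthonormal basis elements inside one block does not follow from equivariance alone. You need each harmonic space to be absolutely irreducible, so that Schur's lemma makes every invariant bilinear form on it a multiple of the $L^2$ pairing. Alternatively, the paper's explicit integration-by-parts identities give this directly.
\end{enumerate}
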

\begin{proof}
\textit{Step 1: Angular orthogonality.}
Write $S(V)_{ab}=D_aV_b+D_bV_a$. Integration by parts on the unit sphere gives
\[
\int_{\Sph^m}\ip{DY}{V}\dd\mu_\gamma
=\int_{\Sph^m}\ip{Y^\circ}{S(V)}\dd\mu_\gamma=0,
\]
and, for a trace-free divergence-free tensor harmonic $T$,
\[
\int_{\Sph^m}\ip{T}{Y\gamma}\dd\mu_\gamma
=\int_{\Sph^m}\ip{T}{Y^\circ}\dd\mu_\gamma
=\int_{\Sph^m}\ip{T}{S(V)}\dd\mu_\gamma=0.
\]
The corresponding derivative pairings follow from
\begin{align*}
D^aY^\circ_{ab}&=-\frac{m-1}{m}(\mu-m)D_bY,
&D^aS(V)_{ab}&=-(\mu-m)V_b,\\
-D^cD_cY^\circ&=(\mu-2m)Y^\circ,
&-D^cD_cS(V)&=(\mu-m-2)S(V).
\end{align*}
Different degrees are orthogonal by
self-adjointness of the spherical Laplacians. Within each degree,
integration by parts reduces the pairings to the inner products of the
chosen orthonormal basis elements.

Take $e_0=\partial_r$ and $e_a=s^{-1}\widetilde e_a$, where
$\widetilde e_a$ is a $\gamma$-orthonormal frame. Then
\[
(\nabla_{e_0}h)_{ij}=h_{ij}',\qquad
(\nabla_{e_c}h)_{ij}=s^{-1}D_ch_{ij}
+\cot r\left(\delta_{ic}h_{0j}+\delta_{jc}h_{i0}
-\delta_{i0}h_{cj}-\delta_{j0}h_{ic}\right).
\]
Integration by parts reduces the mixed terms in the gradient pairing to
the trace and divergence pairings above. The terms with two angular
derivatives use the displayed Laplacian identities; radial derivatives
preserve each mode.
Consequently, on every annulus $A_{a,b}=\{a<r<b\}$,
\[
\int_{A_{a,b}}\ip{\nabla\Pi_\beta h}{\nabla\Pi_{\beta'}k}\dd v=0
\qquad(\beta\ne\beta').
\]
The same holds for the tensors, their traces, and the gradients of their
traces, including distinct orthonormal basis elements within one block.
Thus, for every finite set $F$, the sum $P_F=\sum_{\beta\in F}\Pi_\beta$ satisfies
\begin{equation}\label{eq:projection-bound}
\norm{P_Fh}_{H^1(A_{a,b})}^2
+\norm{h-P_Fh}_{H^1(A_{a,b})}^2
=\norm h_{H^1(A_{a,b})}^2.
\end{equation}

\textit{Step 2: $H^1$ convergence at the center.}
First let $h$ be smooth. Equation~\eqref{eq:projection-bound} bounds
$\norm{P_Fh}_{H^1(A_{a,\delta})}$ independently of $a>0$.
These are also weak derivatives across the center. To see this directly,
let $u$ be any component of $P_Fh$ in smooth local coordinates and let
$\zeta_\varepsilon$ vanish on $B_\varepsilon$ and equal one outside
$B_{2\varepsilon}$, with $|\nabla\zeta_\varepsilon|\leq C/\varepsilon$.
In integration by parts against a smooth test function $\varphi$, the
additional term is bounded by
\[
\left|\int u\varphi\,\partial_i\zeta_\varepsilon\dd v\right|
\leq C\norm\varphi_{L^\infty}
\norm u_{L^2(B_{2\varepsilon}\setminus B_\varepsilon)}
\varepsilon^{(n-2)/2}\longrightarrow0.
\]
Hence $P_Fh\in H^1(\Omega)$, and \eqref{eq:projection-bound} holds also
on $B_b$ after letting $a\downarrow0$.

Put $P_N=P_{F_N}$. Smooth harmonic expansions converge with their first
derivatives on each closed annulus. For any $0<\rho<\delta$, therefore,
\begin{align*}
\norm{h-P_Nh}_{H^1(\Omega)}^2
&\leq\norm{h-P_Nh}_{H^1(\Omega\setminus B_\rho)}^2
+\norm h_{H^1(B_\rho)}^2,\\
\limsup_{N\to\infty}\norm{h-P_Nh}_{H^1(\Omega)}^2
&\leq\norm h_{H^1(B_\rho)}^2\longrightarrow0
\qquad(\rho\downarrow0).
\end{align*}
The bound $\norm{P_Fh}_{H^1}\leq\norm h_{H^1}$ extends $P_F$ to all $H^1$ tensors. If smooth
$h_j\to h$ in $H^1$, then
\[
\norm{h-P_Nh}_{H^1}
\leq2\norm{h-h_j}_{H^1}+\norm{h_j-P_Nh_j}_{H^1}.
\]
Letting first $N\to\infty$ and then $j\to\infty$ proves
\eqref{eq:angular-convergence}.

\textit{Step 3: Constraints and boundary terms.}
Restriction to $r=\delta$ commutes with angular projection, so $h^T=0$
is preserved. The polar-coordinate divergence formulas also give
\[
\divg(\Pi_\beta h)=\widehat\Pi_\beta(\divg h),
\]
where $\widehat\Pi_\beta$ denotes the corresponding one-form projection:
formed from $Y\,dr$ and $dY$ in scalar type, from divergence-free
one-form harmonics in vector type, and zero in tensor type. The identity extends from smooth to $H^1$ tensors by continuity.
In particular, divergence-free tensors remain divergence-free.

For the weak joint condition, let $\pi_\ell$ be the scalar harmonic
projection. The formula for the Hessian in polar coordinates gives
\[
\Pi_{\ell,\mathrm{sc}}\DRs \varphi=\DRs(\pi_\ell \varphi),\qquad
\Pi_{\ell,\mathrm{vec}}\DRs \varphi
=\Pi_{\ell,\mathrm{TT}}\DRs \varphi=0.
\]
If $\varphi$ is smooth, then $\pi_\ell \varphi$ is smooth also at the center.
Indeed, in normal coordinates the homogeneous Taylor polynomial of
degree $j$ is a sum of terms $|x|^{2i}H_{j-2i}(x)$, where $H_{j-2i}$
is homogeneous and harmonic. The projection retains just those terms
with $j-2i=\ell$, which are polynomials. The remainder $R_N$ after Taylor expansion through order $N$ satisfies
\[
|\partial^k\pi_\ell R_N(x)|\leq C_{N,k,\ell}|x|^{N+1-k}
\qquad(k\leq N).
\]
Radial derivatives commute with the projection, and angular derivatives
are bounded on its fixed finite-dimensional eigenspace. Expressing
derivatives in polar coordinates introduces at most one factor $r^{-1}$
per derivative. As $N$ is arbitrary,
this proves smoothness at the center.
Thus, for $h\in Z$ and every smooth $\varphi$, self-adjointness of the projections gives
\[
\int_\Omega\ip{\DRs \varphi}{\Pi_{\ell,\mathrm{sc}}h}\dd v
=\int_\Omega\ip{\DRs(\pi_\ell \varphi)}h\dd v=0;
\]
the vector and tensor pairings vanish directly. This proves
$\Pi_\beta h\in Z$.

Finally, the normal--normal boundary component is scalar, while the
mixed boundary one-form is a multiple of $dY$ in scalar type,
divergence-free in vector type, and zero in tensor type. Hence the boundary pairings in \eqref{eq:Q} are
orthogonal as well. Together with Step~1 and the radial nature of
$\lambda$, this proves \eqref{eq:orthogonal}. Continuity of $Q$ on the divergence-free $H^1$ space and \eqref{eq:angular-convergence} now give
\[
Q(h)=\lim_{N\to\infty}Q(P_Nh)
=\lim_{N\to\infty}\sum_{\beta\in F_N}Q(\Pi_\beta h),
\]
which is \eqref{eq:sum-Q}.
\end{proof}

\subsection{Approximation in each spherical mode}

\begin{lemma}\label{lem:mode-density}
Every $h\in Z_\beta$ admits smooth $h_j\in Z_\beta$ with
\[
h_j\to h\text{ in }H^1,\qquad \tr h_j=\tr h,\qquad
h_j|_\Sigma=h|_\Sigma.
\]
\end{lemma}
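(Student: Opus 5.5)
Within the block $\beta$, $h$ reduces to finitely many radial coefficient functions. For a single normalized angular element these are $a,b,p,f$ in scalar type, $b,f$ in vector type, and $f$ in tensor type. So the task is to approximate these one-variable $H^1$-type functions by smooth ones while keeping three things fixed: the trace, the boundary values, and the linear constraints. The constraints are $\divg h=0$ and the weak joint condition \eqref{eq:weak-joint}. By Lemma~\ref{lem:blocks}, \eqref{eq:weak-joint} restricted to the block only involves tests $\DRs\varphi$ with $\varphi=\varphi(r)Y$.

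The plan follows the introduction. The trace $\tau=\tr h$ is a fixed $H^1$ function in the scalar mode (zero in vector and tensor types). Set $h=\tfrac1n\tau\gb+\mathring h$. Then $\divg h=0$ gives $\divg\mathring h=-\tfrac1n d\tau$.

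Next take the trace of the distributional identity in \eqref{eq:weak-joint}, using $\DR(h)=-\Delta\tau+\divg\divg h-(n-1)\tau$. With $\divg h=0$ the weak interior condition becomes $\Delta\tau+(n-1)\tau=0$ weakly. The boundary part of \eqref{eq:weak-joint} gives a Robin/Neumann-type condition relating $\partial_\nu\tau$ and $\tau$ weakly, via \eqref{eq:DH} with $\divg h=0$ and $h^T=0$. Elliptic regularity for this ODE in $r$ (after projecting onto $Y$), together with center regularity, shows $\tau$ is smooth.

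I would then write $h=\psi\,\Hess\tau+\dots$. More concretely, I subtract an explicit smooth tensor $k$ built from $\tau$, namely a combination of $\tau\gb$ and $\Hess\tau$ corrected so that $k$ has $k^T=0$. The remainder $e=h-k$ is then trace-free, divergence-free, has $e^T=0$, and lies in the block. This step requires choosing $k$ to solve the constraints exactly. Since $\tau$ already satisfies the equations, $k$ can be taken critical and divergence-free, for example by solving for a gauge correction $\Lie_X\gb$ with $X|_\Sigma=0$.

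For the trace-free divergence-free remainder $e$, the conditions $\tr e=0$ and $\divg e=0$ form a first-order linear ODE system in $r$ for the radial coefficients. In scalar type this determines $a$ and $p$ algebraically from $f$ and $b$, and gives $b'$ in terms of $f$ and lower terms. In vector type it gives $b'$ in terms of $b$ and $f$. Any trace-free divergence-free $e$ with $e^T=0$ automatically satisfies \eqref{eq:weak-joint}. Indeed, pairing with $\DRs\varphi$ after integration by parts leaves only boundary terms involving $e^T$ and $\tr e$, which vanish, plus $\int\ip{\Hess\varphi}{e}$, which reduces by divergence-freeness to boundary terms in $e(\nu,\cdot)$. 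These are fixed boundary data, so I would keep them fixed.

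So it suffices to approximate the free coefficient (essentially $f$, or $b$ in the Killing and degree-one cases) in the appropriate weighted $H^1$ space. The approximants $f_j$ must be smooth, have the correct parity and vanishing order at $r=0$ dictated by Lemma~\ref{lem:pole}, and agree with $f$ near $r=\delta$ to the needed order so that the boundary values are preserved. The other coefficients are then reconstructed by integrating the ODE, with the integration constant fixed at $r=\delta$.

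The main obstacle is the center. The reconstruction integrates from the boundary, so one must check that the reconstructed coefficients have the correct behavior at $r=0$ and that $H^1$ convergence of $f_j$ forces $H^1$ convergence of the whole tensor, including the singular weights $s^{-1}$ and $s^{-2}$. I would first handle this by approximating $h$ near $0$ by a cutoff of $h$ itself, which is legitimate because the $H^1$ mass near the center is small, as in Step~2 of Lemma~\ref{lem:blocks}. Any resulting constraint defect would then be repaired by a small correction supported in a fixed annulus, obtained from the finite-dimensional ODE solution space. The repair costs $o(1)$ in $H^1$.
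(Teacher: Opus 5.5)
Your overall frame matches the paper's. The equation $(\Delta+m)\tau=0$ makes $\tau$ smooth, a smooth divergence-free tensor carrying the trace is subtracted, and the trace-free divergence-free remainder (your $e$, the paper's $\widetilde h=h-h_0$) is approximated with all its boundary components frozen. The difference is then trace-free, divergence-free and zero on $\Sigma$, so it is invisible to \eqref{eq:weak-joint}.

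The gap is at the center, and that is where the content of the lemma lies. With $f$ as the free coefficient, you must recover $(a,b)$ by integrating $a'+(m+1)\cot r\,a=\mu b/s^2$ and $b'+m\cot r\,b=a/m+\kappa_\ell f$ inward from $r=\delta$, where $\kappa_\ell=(m-1)(\mu-m)/m$. The homogeneous solutions of this system include a branch with $a\sim r^{-m-1-\sqrt{\mu/m}}$, which is not even in $L^2$. Approximants built this way therefore generically fail to lie in $H^1$ near $r=0$.

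Your repair does not close this. Cutting $e$ off at scale $\varepsilon$ creates the divergence defect $e(\nabla\zeta_\varepsilon,\cdot)$, supported in $\varepsilon<r<2\varepsilon$. Since divergence is local, no correction supported in a fixed annulus can cancel it. Suppose instead the correction sits in the shrinking annulus, or is a homogeneous solution carried out to a fixed annulus and steered to zero there. Then the claim that it costs $o(1)$ in $H^1$ is exactly what has to be proved, and you give no mechanism for it. The second variant must also control that homogeneous solution on $2\varepsilon<r<r_0$.

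The missing idea, which is how the paper proceeds, is to take the $rr$-coefficient $a$ as the free function. Then $p=-a/m$, $b=A\bigl(a'+(m+1)\cot r\,a\bigr)$ with $A=s^2/\mu$, and $f=(b'+m\cot r\,b-a/m)/\kappa_\ell$ are all produced by differentiation. There are no integration constants and hence no singular homogeneous solutions. Reconstructing from $\zeta_\varepsilon a$ gives an exactly trace-free, divergence-free tensor $\widetilde h_\zeta$ that equals $\widetilde h$ for $r>2\varepsilon$ and vanishes for $r<\varepsilon$. Use the two ODEs to eliminate $a'$ and $b'$. The error $E_\zeta=\widetilde h_\zeta-\zeta_\varepsilon\widetilde h$ then satisfies, on each sphere, the scale-invariant bound $\int_{\Sph^m}(|E_\zeta|^2+r^2|\nabla E_\zeta|^2)\,d\mu_\gamma\leq C\int_{\Sph^m}(|\widetilde h|^2+r^2|\nabla\widetilde h|^2)\,d\mu_\gamma$. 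The Hardy inequality ($n\geq3$) makes it $o(1)$ in $H^1$. Only afterwards is $\zeta_\varepsilon a$ approximated in $H^3$ by smooth functions vanishing near the center, with $a,a',a''$ fixed at $\delta$. The analogous free coefficients are $b\in H^2$ in non-Killing vector modes and $f$ in tensor modes.

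Two smaller points. First, the paper's $h_0=-\frac1m\Hess\tau$ is divergence-free because $\Delta\tau=-m\tau$ and $\mathrm{Ric}=m\gb$, but it need not satisfy $h_0^T=0$. Your proposed correction $\Lie_X\gb$ with $X|_\Sigma=0$ cannot change that, since such tensors have zero tangential part on $\Sigma$. Fortunately $k^T=0$ is unnecessary once the remainder's boundary values are frozen. Second, in scalar degrees $0,1$ and in Killing vector modes there is no free coefficient to approximate, contrary to your suggestion of approximating $b$ there. The trace-free divergence-free tensors in these modes form a finite-dimensional space (for instance $a=Cs^{-m-1}$, or $b=Cs^{-m}$) with no nonzero $L^2$ element, so the remainder simply vanishes.
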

\begin{proof}
It suffices to consider one angular basis element. For $h\in Z_\beta$,
$\tau=\tr h$ satisfies $(\Delta+m)\tau=0$, where $m=n-1$.
Interior elliptic regularity gives smoothness at the center. In scalar
type, write $\tau=\vartheta(r)Y$; its radial equation is
\[
\vartheta''+m\cot r\,\vartheta'
+\left(m-\frac\mu{s^2}\right)\vartheta=0.
\]
The coefficients are smooth near $r=\delta$, so this solution extends
smoothly to the boundary. In vector and tensor types $\tau=0$.
Thus $\tau\in C^\infty(\overline\Omega)$, and
\[
h_0=-\frac1m\Hess\tau,\qquad \widetilde h=h-h_0,
\qquad \tr h_0=\tau,\quad\divg h_0=0,\quad\tr \widetilde h=\divg \widetilde h=0.
\]
We approximate $\widetilde h$ by smooth trace-free divergence-free tensors with the
same boundary values of all components. Such approximations $\widetilde h_j$ give $h_j=h_0+\widetilde h_j\in Z_\beta$, since
\[
\int_\Omega\ip{\DRs\varphi}{h_j-h}\dd v
=\int_\Omega\ip{\Hess\varphi}{\widetilde h_j-\widetilde h}\dd v=0
\]
by integration by parts.

For scalar degree $\ell\geq2$, put
$A=s^2/\mu$ and $\kappa_\ell=(m-1)(\mu-m)/m>0$.
The trace and divergence equations for the coefficients of $\widetilde h$ are
\[
p=-\frac am,\qquad
b=A\bigl(a'+(m+1)\cot r\,a\bigr),\qquad
f=\frac{b'+m\cot r\,b-a/m}{\kappa_\ell}.
\]
On closed annuli these identities give $a\in H^3$ and $b\in H^2$.
Let $\zeta_\varepsilon$ be zero for $r\leq\varepsilon$ and one for
$r\geq2\varepsilon$, with $|\zeta_\varepsilon^{(j)}|\leq C_j\varepsilon^{-j}$
for $j\leq3$. Suppress its subscript and reconstruct $\widetilde h_\zeta$ from $a_\zeta=\zeta a$:
\[
p_\zeta=-\frac{\zeta a}{m},\qquad
b_\zeta=\zeta b+A\zeta'a,\qquad
f_\zeta=\zeta f+\frac{2\zeta'b+A(\zeta''+\cot r\,\zeta')a}{\kappa_\ell}.
\]
Then $\tr \widetilde h_\zeta=\divg \widetilde h_\zeta=0$. Using
\[
a'=b/A-(m+1)\cot r\,a,\qquad b'=\kappa_\ell f-m\cot r\,b+a/m,
\]
the reconstruction formulas give, for $E_\zeta=\widetilde h_\zeta-\zeta \widetilde h$,
\[
\int_{\Sph^m}\left(|E_\zeta|^2+r^2|\nabla E_\zeta|^2\right)\dd\mu_\gamma
\leq C_\beta\int_{\Sph^m}
\left(|\widetilde h|^2+r^2|\nabla \widetilde h|^2\right)\dd\mu_\gamma.
\]
Here $E_\zeta$ is supported in $\varepsilon<r<2\varepsilon$;
the tensor norms are taken with respect to $\gb$ at radius $r$.
The two displayed equations control $a'$ and $b'$, while the derivative
bounds for $\zeta$ control the remaining coefficients on this annulus.
Integrating in $r$ and estimating $(\zeta-1)\widetilde h$ gives
\[
\norm{\widetilde h_\zeta-\widetilde h}_{H^1}^2
\leq C_\beta\int_{r<2\varepsilon}
\left(|\nabla \widetilde h|^2+\frac{|\widetilde h|^2}{r^2}+|\widetilde h|^2\right)\dd v
\longrightarrow0.
\]
The local Hardy inequality gives $\widetilde h/r\in L^2$ because $n\geq3$.
Approximate $a_\zeta$ in $H^3$ by smooth functions vanishing near the
center and preserving $a_\zeta,a_\zeta',a_\zeta''$ at $\delta$.
The reconstruction gives $H^1$ convergence and preserves the boundary values of all tensor components.

For a non-Killing vector mode, put $L=\mu-m>0$. The divergence equation is
\[
b'+m\cot r\,b=Lf,
\]
so $b\in H^2$ on closed annuli. The coefficients
\[
b_\zeta=\zeta b,\qquad f_\zeta=\zeta f+\frac{\zeta'b}{L}
\]
give the same cutoff estimate. Smooth $H^2$ approximation of $b_\zeta$
preserving its value and first derivative at $\delta$ completes this case.
For tensor modes, an ordinary cutoff and $H^1$ approximation of $f$
with its boundary value fixed preserve both constraints.

The exceptional modes have no nonzero $L^2$ trace-free divergence-free
tensor. In scalar degree zero, the divergence equation gives
$a=C s^{-m-1}$. In scalar degree one, setting $B=s^m b$ gives
\[
B''+\cot r\,B'-s^{-2}B=0,\qquad
B=C_1\tan(r/2)+C_2\cot(r/2),\qquad a=m s^{-m}B'.
\]
The two branches of $a$ have orders $r^{-m}$ and $r^{-m-2}$ at the center;
neither is in $L^2$. A Killing vector mode has
$b=C s^{-m}$, whose mixed tensor is likewise not in $L^2$ unless $C=0$.
Thus $\widetilde h=0$ in these cases and $h=h_0$ is already smooth.
A diagonal choice of the cutoffs and radial approximations proves the lemma.
\end{proof}

\subsection{Positivity on the weak space}
\begin{theorem}\label{thm:weak-positive}
On every proper cap,
\[
Q(h)\geq0\quad(h\in Z),\qquad Q(h)=0\Longleftrightarrow h=0.
\]
\end{theorem}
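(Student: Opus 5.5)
The plan is to reduce to a single block and then pass the smooth positivity of Proposition~\ref{prop:smooth-positive} to the weak space through the approximation of Lemma~\ref{lem:mode-density}. Let $h\in Z$. By Lemma~\ref{lem:blocks}, each $\Pi_\beta h$ lies in $Z_\beta$, and \eqref{eq:sum-Q} gives
\[
Q(h)=\lim_N\sum_{\beta\in F_N}Q(\Pi_\beta h).
\]
It therefore suffices to prove two facts for every block $\beta$: first, $Q(k)\geq0$ for all $k\in Z_\beta$; second, $Q(k)=0$ forces $k=0$. Then $Q(h)\geq0$. If $Q(h)=0$, every $Q(\Pi_\beta h)$ vanishes, since the partial sums are sums of nonnegative terms increasing to $0$. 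Hence $\Pi_\beta h=0$ for all $\beta$, and \eqref{eq:angular-convergence} yields $h=0$. The converse direction is trivial.

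Fix $k\in Z_\beta$ and take smooth $k_j\in Z_\beta$ from Lemma~\ref{lem:mode-density}, with $k_j\to k$ in $H^1$. Each $k_j$ is smooth, divergence-free, and has $k_j^T=0$. It satisfies \eqref{eq:weak-joint}, so by the remark after \eqref{eq:weak-joint} it is critical. Proposition~\ref{prop:smooth-positive} gives $Q(k_j)\geq0$. Formula \eqref{eq:Q} is continuous in $H^1$ on divergence-free tensors, by the trace theorem, so $Q(k)=\lim Q(k_j)\geq0$.

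For the equality case, suppose $Q(k)=0$ for some nonzero $k\in Z_\beta$. By the Gårding inequality \eqref{eq:garding} and the compact embeddings, the set of $k\in Z_\beta$ with $Q(k)=0$ is a closed subspace on which $\kappa\norm{k}_{H^1}^2\leq C\norm{k}_{L^2}^2$. This subspace is the kernel of the nonnegative form $Q$ on $Z_\beta$, by Cauchy--Schwarz, and it is finite dimensional. A minimizer of $Q$ is thus characterized by $Q(k,\phi)=0$ for all $\phi\in Z_\beta$. I would use this Euler--Lagrange relation together with the radial ODE structure of a single mode to show that $k$ is smooth up to the boundary and at the center.

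An alternative route, which I would try first, is to argue directly in each mode type using Lemma~\ref{lem:mode-density}. Along the approximating sequence, $Q(k_j)\to0$. In tensor type, \eqref{eq:tensor-positive} bounds $\frac c4\norm{k_j}_{L^2}^2$ by $Q(k_j)$, so $k=0$. In scalar type of degree $\ell\geq2$, the factorization \eqref{eq:factor} is applied to the gauge-shifted representatives $k_j-\Lie_{X_j}\gb$. Its second integral controls $\theta_j$, hence $w_j$, hence $u_j,d_j$, so these representatives tend to zero. Then $k_j$ is asymptotically a boundary-fixing gauge tensor $\Lie_{X_j}\gb$ that is divergence-free. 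The same reasoning covers the vector and low-degree cases through Lemmas~\ref{lem:vector-tensor} and \ref{lem:low-scalar}.

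The main obstacle is this last step: showing that a divergence-free limit of gauge tensors vanishes. For this I would use that $\divg\Lie_X\gb=\Delta X+\nabla\divg X+(n-1)X$ with $X|_\Sigma=0$ is an elliptic Dirichlet system. This is exactly the operator inverted in Lemma~\ref{lem:slice}, and it is injective near $\gb$, as its invertibility in \cite[Proposition~11]{bm} shows. Injectivity gives $\norm{X}_{H^2}\leq C\norm{\divg\Lie_X\gb}_{L^2}$. The delicate point is that the divergence-free gauge and the lapse-shift gauge differ. So I must control the decomposition $k_j=\Lie_{X_j}\gb+(\text{small})$ in $H^1$ uniformly, so that applying $\divg$ to the small part still gives a small $H^{-1}$ quantity. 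This uses the explicit radial reconstruction \eqref{eq:reconstruct} together with Hardy-type bounds near the center, as in Lemma~\ref{lem:mode-density}. Once that is in place, $X_j\to0$ in $H^1$, so $k=0$.
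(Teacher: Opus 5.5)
Your skeleton is the paper's: block reduction through Lemma~\ref{lem:blocks}, nonnegativity from Proposition~\ref{prop:smooth-positive} plus the density of Lemma~\ref{lem:mode-density}, and, for equality, passage to the lapse--shift remainders $r_j=k_j-\Lie_{X_j}\gb$, which \eqref{eq:factor} (or the vector formula) makes small. The Euler--Lagrange route is not needed, and its regularity step would have to handle multipliers for the constraints. The real difference is your last step. You flag it as the main obstacle, but it is easier than you expect.

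The paper uses the explicit coefficient formulas to show $\norm{X_j-X_i}_{H^1}\leq C_\beta\norm{k_j-k_i}_{H^1}$, so $X_j\to X$ with $X|_\Sigma=0$. It then needs $r_j\to0$ only on closed annuli to get $k=\Lie_X\gb$, and concludes with
\[
\norm k_{L^2}^2=\int_\Omega\ip k{\Lie_X\gb}\dd v=-2\int_\Omega\ip{\divg k}X\dd v=0 .
\]
You instead aim for $X_j\to0$ by inverting the Dirichlet problem for $X\mapsto\divg\Lie_X\gb$, using $\divg\Lie_{X_j}\gb=-\divg r_j$. This works, but what it needs is only $\norm{r_j}_{L^2(\Omega)}\to0$ on the whole cap. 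It does not need $H^1$ control of $r_j$, and $Q$ could not supply that anyway, since it controls $\theta_j$ only to first order.

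The required $L^2$ bound follows directly from the weights in \eqref{eq:factor}, without any Hardy inequality. From \eqref{eq:reconstruct},
\[
u=-\frac1L\left(\frac{s^{2-m}\theta'}{\lambda}+\frac{s^{1-m}\theta}{\lambda^2}\right),\qquad s^{m+2}d^2\leq 2\frac{s^{2-m}\theta^2}{\lambda^2}+2m^2s^mu^2 .
\]
Since $s\leq1$ and $\lambda\geq c$, this gives $\norm{r_j}_{L^2}^2\leq C_\beta\,Q(k_j)$. Note that $u_j$ involves $\theta_j'$, so it is controlled by the first integral in \eqref{eq:factor}, not only the second. In a non-Killing vector mode the analogous bound is $\int s^{m-2}b^2\dd r\leq C\int\lambda s^{m-4}b^2\dd r$. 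In the low-degree and Killing blocks, the smooth critical approximants are exactly gauge, so $r_j=0$.

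Even the elliptic estimate can be dropped. Pairing $k_j=\Lie_{X_j}\gb+r_j$ with $\Lie_{X_j}\gb$, and using $\divg k_j=0$ and $X_j|_\Sigma=0$, gives $\norm{\Lie_{X_j}\gb}_{L^2}^2=-\int\ip{r_j}{\Lie_{X_j}\gb}\dd v$. Hence $\norm{k_j}_{L^2}\leq2\norm{r_j}_{L^2}\to0$. This is the paper's identity applied at finite $j$.

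So your version trades the paper's $H^1$ bound on the gauge fields for a global weighted bound on the remainder near the center. The paper's version gets by with annular convergence of the remainder, because the convergence of $X_j$ supplies the global control.
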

\begin{proof}
For smooth tensors in $Z_\beta$, Lemma~\ref{lem:comparison} and
Proposition~\ref{prop:smooth-positive} give $Q\geq0$.
Lemma~\ref{lem:mode-density} and continuity of $Q$ extend nonnegativity
to $Z_\beta$. The orthogonal decomposition then gives $Q\geq0$ on $Z$.
If $Q(h)=0$, every block has zero quadratic form. By angular orthogonality,
it suffices to consider one normalized angular basis element in a block.

Choose smooth $h_j\in Z_\beta$ converging to $h$ in $H^1$.
Then $Q(h_j)\to0$. In scalar degrees $\ell\geq2$ and non-Killing vector
modes, let $X_j$ be the gauge fields from Lemma~\ref{lem:relative}.
Their coefficient formulas give
\[
\norm{X_j-X_i}_{H^1}\leq C_\beta\norm{h_j-h_i}_{H^1}.
\]
Indeed, in a radially parallel orthonormal frame their coefficients are
$r$ times smooth bounded functions times the coefficients of $h_j$.
A radial derivative involves only these coefficients and $r$ times their
first derivatives. Angular derivatives are controlled in the fixed mode.
Thus $X_j\to X$ in $H^1$, with $X|_\Sigma=0$, and
\[
h_j-\Lie_{X_j}\gb\longrightarrow h-\Lie_X\gb\quad\hbox{in }L^2.
\]
In scalar type, \eqref{eq:factor} gives $\theta_j\to0$ in $H^1$ on every
closed annulus. The reconstruction \eqref{eq:reconstruct} then gives
$u_j,d_j,b_j\to0$ in $L^2$ there. In a non-Killing vector mode normalized
by $\int_{\Sph^m}|V|^2=1$, the remaining mixed coefficient satisfies
\[
Q(h_j)=\frac{\mu-m}{2}\int_0^\delta\lambda s^{m-4}b_j^2\dd r,
\]
so it also tends to zero on each closed annulus. In either case
$h=\Lie_X\gb$ almost everywhere. Since $X|_\Sigma=0$ and $\divg h=0$,
\[
\norm h_{L^2}^2
=\int_\Omega\ip h{\Lie_X\gb}\dd v
=-2\int_\Omega\ip{\divg h}{X}\dd v=0.
\]
In tensor type, \eqref{eq:tensor-positive} extends by $H^1$ convergence
and immediately gives $h=0$. The low scalar modes are smooth by the
preceding lemma, so their equality cases follow from
Proposition~\ref{prop:smooth-positive} and the same integration by parts.
The Killing vector block is zero. Hence every block vanishes, and $h=0$.
\end{proof}

\section{Proof of local rigidity}\label{sec:nonlinear}
The positive static potential forces every normalized weak limit of
admissible perturbations to satisfy the joint linearized constraints.
Weak positivity and the G\aa rding inequality then complete the proof.

\subsection{A Taylor estimate}
\begin{lemma}\label{lem:tame}
If $h\in C^2$, $h^T=0$, and $\norm h_{C^2}$ is sufficiently small, then
every $\varphi\in C^\infty(\overline\Omega)$ satisfies
\begin{multline*}
\bigg|\int_\Omega \varphi\bigl(R(\gb+h)-n(n-1)\bigr)\dd v
+2\int_\Sigma \varphi\bigl(H(\gb+h)-H(\gb)\bigr)\dd\sigma\\
-\int_\Omega\ip{\DRs \varphi}{h}\dd v\bigg|
\leq C_\varphi\norm h_{C^2}\norm h_{H^1}.
\end{multline*}
No divergence condition is required.
\end{lemma}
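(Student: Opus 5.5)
Write $g_t=\gb+th$ for $t\in[0,1]$ and set
\[
F(t)=\int_\Omega \varphi\bigl(R(g_t)-n(n-1)\bigr)\dd v
+2\int_\Sigma \varphi\bigl(H(g_t)-H(\gb)\bigr)\dd\sigma .
\]
Then $F(0)=0$. By Lemma~\ref{lem:green}, in the form \eqref{eq:green-fixed} with $h^T=0$, we have
\[
F'(0)=\int_\Omega\ip{\DRs\varphi}{h}\dd v .
\]
So the quantity to bound is $F(1)-F(0)-F'(0)=\int_0^1(1-t)F''(t)\dd t$, and the plan is to show
\[
\abs{F''(t)}\leq C_\varphi\norm h_{C^2}\norm h_{H^1}\qquad\text{uniformly for }t\in[0,1].
\]
Note that the target is linear in $\norm h_{H^1}$, not quadratic. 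We therefore cannot simply write $F''$ as a quadratic form bounded by $\norm h_{H^1}^2$. Instead, one factor of $h$ must be measured in $C^2$ and the other in $H^1$, after integrating the second-derivative terms by parts.

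\textbf{Interior term.} Pointwise, $R(g)$ is a polynomial in $g^{-1}$, $\partial g$ and $\partial^2g$. It is affine in $\partial^2 g$ and quadratic in $\partial g$, with coefficients smooth in $g$. Differentiating twice in $t$ gives $\partial_t^2R(g_t)$ as a sum of three kinds of terms:
\[
A(g_t)\,h*\partial^2h,\qquad B(g_t,\partial g_t)\,h*\partial h,\qquad C(g_t,\partial g_t,\partial^2 g_t)\,h*h .
\]
Here $A,B,C$ are bounded in $C^1$, $C^0$, $C^0$ respectively, because $\norm h_{C^2}$ is small; for the derivative of $A$ we use that $A$ depends on $g_t$ only.

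The terms of types $B$ and $C$ are bounded by $\norm h_{C^1}\,\norm h_{L^1}$, hence by $C\norm h_{C^2}\norm h_{H^1}$. For the $A$ term, we keep $\partial^2 h$ at the level of $C^0$: bounding $\abs{A\,h*\partial^2h}\leq C\norm h_{C^2}\abs h$ directly gives $C\norm h_{C^2}\norm h_{L^1}$. So in fact no integration by parts is needed in the interior. The entire product is bounded by $\norm h_{C^2}$ times an $L^1$ norm of $h$ or $\partial h$, and $L^1\leq C\,L^2\leq C\,H^1$ on the compact cap.

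\textbf{Boundary term.} Similarly, $H(g)$ at $\Sigma$ is a smooth function of $g|_\Sigma$ and $\partial g|_\Sigma$, involving $g^{-1}$, the normal $\nu_g$ and first derivatives. Hence $\partial_t^2H(g_t)$ is a sum of terms $h*h$ and $h*\partial h$, with coefficients bounded by $C(1+\norm h_{C^1})$. We bound each such term by $\norm h_{C^1}\,\abs{h\text{ or }\partial h}$ pointwise, which gives
\[
\abs{\text{boundary contribution}}\leq C\norm h_{C^1}\int_\Sigma\bigl(\abs h+\abs{\partial h}\bigr)\dd\sigma .
\]

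\textbf{Main obstacle.} The boundary integral is the one delicate point, since the $H^1$ trace theorem controls $h|_\Sigma$ but not $\partial h|_\Sigma$. The remedy is to place the derivative on the $C^2$ factor. In each term $h*\partial h$, write the factor carrying the derivative as $\partial h\in C^1\subset C^0$ and the other factor as $h$. Then the pointwise bound is $\norm h_{C^1}\abs{h}$, and the trace theorem gives
\[
\norm{h}_{L^1(\Sigma)}\leq C\norm h_{L^2(\Sigma)}\leq C\norm h_{H^1}.
\]
Every quadratic term in $\partial_t^2 H$ has at most one derivative in total, so this distribution is always possible.

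\textbf{Conclusion.} Integrating $(1-t)F''(t)$ over $[0,1]$ gives the stated bound, with $C_\varphi$ depending on $\norm\varphi_{C^0}$, since $\varphi$ enters both interior and boundary terms only as a bounded weight. No divergence condition is used anywhere: the linear term is handled exactly by \eqref{eq:green-fixed}, which needs only $h^T=0$.
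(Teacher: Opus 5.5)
Your proof is correct and follows the paper's argument: the linear term is removed exactly by \eqref{eq:green-fixed}, and the Taylor remainder is bounded pointwise by products with one factor carrying up to two derivatives of $h$ in $C^0$ and the other, $h$ or $\nabla h$, in $L^1(\Omega)$, or, via the trace theorem, $h$ without derivatives in $L^1(\Sigma)$. One small slip: since $R$ is quadratic in $\partial g$, the second derivative $\partial_t^2R(g_t)$ also contains a term $\partial h*\partial h$ that is missing from your types $A,B,C$; it is handled exactly as your concluding sentence indicates, by $\norm h_{C^1}\norm{\nabla h}_{L^1}\leq C\norm h_{C^2}\norm h_{H^1}$.
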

\begin{proof}
Put
\[
E_R=R(\gb+h)-n(n-1)-\DR(h),\qquad
E_H=H(\gb+h)-H(\gb)-\DHg(h).
\]
The coordinate formulas for scalar and mean curvature give the Taylor
bounds
\[
|E_R|\leq C\bigl(|h|\,|\nabla^2h|+|\nabla h|^2+|h|^2\bigr),
\qquad
|E_H|\leq C\bigl(|h|\,|\nabla h|+|h|^2\bigr).
\]
Indeed, scalar curvature is affine in the second derivatives of the
metric, and mean curvature is affine in its first derivatives. Their
coefficients depend smoothly on the metric. Taking one factor in
$L^\infty$ and using the trace theorem gives
\[
\norm{E_R}_{L^2(\Omega)}+\norm{E_H}_{L^2(\Sigma)}
\leq C\norm h_{C^2}\norm h_{H^1}.
\]
The assertion follows by Cauchy--Schwarz and the integration-by-parts identity
\eqref{eq:green-fixed}.
\end{proof}

\subsection{The compactness argument}
\begin{proof}[Proof of Theorem~\ref{thm:main}]
Suppose the theorem fails. After applying the boundary-fixing diffeomorphism
from Lemma~\ref{lem:slice}, there are nonzero admissible perturbations
\[
g_j=\gb+h_j,\qquad \divg h_j=0,\qquad h_j^T=0,
\qquad \norm{h_j}_{C^{2,\alpha}}\longrightarrow0.
\]
Set
\[
\varepsilon_j=\norm{h_j}_{H^1},\qquad v_j=h_j/\varepsilon_j,
\qquad \norm{v_j}_{H^1}=1.
\]
Passing to a subsequence, we have
\[
v_j\rightharpoonup v\text{ in }H^1,\qquad
v_j\longrightarrow v\text{ in }L^2(\Omega)\text{ and }L^2(\Sigma),
\qquad \divg v=0,\quad v^T=0.
\]

For a smooth test function $\varphi$, write
\[
E_\varphi(g)=\int_\Omega \varphi\bigl(R(g)-n(n-1)\bigr)\dd v
+2\int_\Sigma \varphi\bigl(H(g)-H(\gb)\bigr)\dd\sigma.
\]
Since $\DRs\lambda=0$, Lemma~\ref{lem:tame} and admissibility give
\[
0\leq E_\lambda(g_j)
\leq C\norm{h_j}_{C^2}\varepsilon_j.
\]
Both curvature defects are nonnegative and $\lambda\geq c>0$. Thus
\[
|E_\varphi(g_j)|\leq\frac{\norm \varphi_{L^\infty}}c E_\lambda(g_j),
\qquad
\left|\int_\Omega\ip{\DRs \varphi}{v_j}\dd v\right|
\leq C_\varphi\norm{h_j}_{C^2}\longrightarrow0.
\]
The second inequality follows by another application of
Lemma~\ref{lem:tame}. Passing to the limit proves the weak joint
constraint \eqref{eq:weak-joint}, so $v\in Z$.

For large $j$, admissibility gives $D(g_j)\geq0$. Dividing the
Brendle--Marques expansion \eqref{eq:BM} by $\varepsilon_j^2$ yields
\[
Q(v_j)\leq C\norm{h_j}_{C^1}=o(1).
\]
Weak lower semicontinuity gives $Q(v)\leq0$, and
Theorem~\ref{thm:weak-positive} forces $v=0$. The G\aa rding inequality
now gives the contradiction
\[
0<\kappa
\leq Q(v_j)+C\norm{v_j}_{L^2}^2
\leq C\norm{h_j}_{C^1}+C\norm{v_j}_{L^2}^2
\longrightarrow0.
\]
\end{proof}

\appendix
\section{Comparison with the Pohozaev form}\label{app:pohozaev}
We prove \eqref{eq:intro-pohozaev} and give its finite-radius version.
The hemisphere is $\Omega_+=\{x\in\Sph^n:x_n\geq0\}$, and the static
function remains $\lambda=x_{n+1}$. On this domain, for variations with
$\DR(\widehat h)=\DHg(\widehat h)=0$, we use the same curvature
second-variation formula as in the main text:
\begin{equation}\label{eq:common-hessian}
Q(\widehat h)=-\frac12\int_{\Omega_+}\lambda
D^2R_{\gb}(\widehat h,\widehat h)\dd v
-\int_{\partial\Omega_+}\lambda
D^2H_{\gb}(\widehat h,\widehat h)\dd\sigma.
\end{equation}
On the cap this is \eqref{eq:critical-hessian}; here the induced
boundary metric is allowed to vary.

Use stereographic coordinates
\[
\Psi(y)=\left(\frac{2y}{1+|y|^2},\frac{1-|y|^2}{1+|y|^2}\right),
\qquad \Psi(\overline{\R^n_+})=\Omega_+\setminus\{-e_{n+1}\}.
\]
We identify spherical tensors with their pullbacks by $\Psi$.
In Euclidean formulas below, $\nabla$ and $\Delta$ denote the
Euclidean gradient and Laplacian. Set
\[
U=(1+|y|^2)^{-(n-2)/2},\qquad
\gb=4U^{4/(n-2)}g_{\mathrm{euc}},\qquad
\mathcal D=\frac{n-2}{2}+y\cdot\nabla.
\]
The dilation multiplier gives the same static function:
\[
\frac2{n-2}\frac{\mathcal DU}{U}
=\lambda\circ\Psi=\frac{1-|y|^2}{1+|y|^2},\qquad
\DRs\lambda=0,\qquad
\partial_\nu\lambda=0,\quad\mathrm{II}_{\gb}=0
\quad\text{on }\partial\Omega_+.
\]

Let $h$ be a smooth Euclidean trace-free symmetric tensor and let
$g_t=\exp(th)$ denote the exponential of its coefficient matrix. Define
\[
L_g=\Delta_g-\frac{n-2}{4(n-1)}R(g),\qquad
B_g=\partial_{\nu_g}+\frac{n-2}{2(n-1)}H(g),\qquad
\omega_t=\sqrt{\det((g_t)_{ij})_{i,j<n}}.
\]
Fix a linear choice of smooth conformal correction $z=z[h]$ satisfying
\begin{equation}\label{eq:yamabe-correction}
\begin{aligned}
(\Delta+n(n+2)U^{4/(n-2)})z
&=\left.\partial_t\bigl((\Delta-L_{g_t})U\bigr)\right|_{t=0}
&&\text{in }\R^n_+,\\
\partial_nz
&=\left.\partial_t\bigl(\omega_t B_{g_t}U\bigr)\right|_{t=0}
&&\text{on }\partial\R^n_+.
\end{aligned}
\end{equation}
Put $u_t=U+tz$. On each fixed half-ball and for small $|t|$,
\[
\widehat g_t=4u_t^{4/(n-2)}\exp(th)
=\gb+t\widehat h+t^2\widehat h^{(2)}+O(t^3),
\]
where $\widehat h$ is given in \eqref{eq:intro-pohozaev} and
\[
\widehat h^{(2)}=\frac12\left.\partial_t^2\widehat g_t\right|_{t=0}
=4U^{4/(n-2)}\left(
\frac12h^2+\frac{4z}{(n-2)U}h
+\frac{2(6-n)z^2}{(n-2)^2U^2}g_{\mathrm{euc}}\right).
\]
Here $h^2$ is formed using the Euclidean metric. Conformal covariance and
\eqref{eq:yamabe-correction} give
\[
\DR(\widehat h)=0,\qquad \DHg(\widehat h)=0.
\]

For $B_R^+=\{|y|<R,y_n>0\}$ and $D_R=\{|y'|<R,y_n=0\}$, define
\begin{align*}
\mathcal G_t^R(\varphi,\psi)
&=\int_{B_R^+}\varphi(\Delta-L_{g_t})\psi\dd y
+\int_{D_R}\varphi(\partial_n+\omega_tB_{g_t})\psi\dd y',\\
\mathcal P_R(h,h)&=[t^2]\mathcal G_t^R(\mathcal Du_t,u_t),
\qquad \mathcal P(h,h)=\lim_{R\to\infty}\mathcal P_R(h,h).
\end{align*}
The symbol $[t^2]$ denotes the quadratic coefficient, and the last
definition applies when the limit exists.

Let $\Omega_R$, $\Gamma_R$, and $S_R$ be the images under $\Psi$ of
$B_R^+$, $D_R$, and $S_R^+=\{|y|=R,y_n\geq0\}$, respectively, and
put $E_R=\partial\Gamma_R$. Set
\[
I_R(g)=\int_{\Omega_R}\lambda\bigl(R(g)-n(n-1)\bigr)\dd v_g
+2\int_{\Gamma_R}\lambda H(g)\dd\sigma_g,
\qquad Q_R(\widehat h)=-\frac12D^2I_R(\gb)[\widehat h,\widehat h].
\]
The mean-curvature integral is only over $\Gamma_R$. The remaining
boundary contribution of $\widehat h^{(2)}$ is
\begin{align}\label{eq:outer-curvature-flux}
\mathcal B_R
={}&\int_{S_R}\left[
\lambda(\divg\widehat h^{(2)}-d\tr\widehat h^{(2)})(\nu_S)
+(\tr\widehat h^{(2)})\partial_{\nu_S}\lambda
-\widehat h^{(2)}(\nabla\lambda,\nu_S)\right]\dd\sigma\\
&-\int_{E_R}\lambda\widehat h^{(2)}(\nu_\Gamma,\eta_\Gamma)\dd\sigma^{n-2}.
\notag
\end{align}
Here all derivatives, traces, normals, and measures are round, and
$\eta_\Gamma$ is the outward conormal to $E_R$ within $\Gamma_R$.
With $\rho=|y|$, define the Euclidean boundary term
\[
\mathcal F_R(z)=\int_{S_R^+}\left[
(\mathcal Dz)\partial_\rho z-\frac R2|\nabla z|^2
+\frac R2n(n+2)U^{4/(n-2)}z^2\right]\dd S.
\]

\begin{proposition}
For the data above,
\begin{equation}\label{eq:finite-pohozaev}
\mathcal P_R(h,h)=\frac{(n-2)^2}{2^{n+1}(n-1)}
\left[-Q_R(\widehat h)+\mathcal B_R\right]
+\mathcal F_R(z).
\end{equation}
If $4U^{4/(n-2)}h$ and $z/U$ extend smoothly to $\overline{\Omega_+}$,
then \eqref{eq:intro-pohozaev} holds.
\end{proposition}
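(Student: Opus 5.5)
The plan is to compute the quadratic coefficient of $\mathcal G_t^R(\mathcal Du_t,u_t)$ directly, using conformal covariance. On the Euclidean side, write $L_{g_t}u_t$ through the conformal Laplacian of the round-type metric $\widehat g_t=4u_t^{4/(n-2)}g_t$. With $p=(n+2)/(n-2)$ and the normalization constant $c_n=(n-2)/(4(n-1))$, conformal covariance gives
\[
L_{g_t}u_t=-c_n\,4^{-1}R(\widehat g_t)\,u_t^{p}\cdot 4^{\cdots}.
\]
The boundary analogue is $\omega_tB_{g_t}u_t\propto H(\widehat g_t)\,u_t^{n/(n-2)}$ times the induced measure factor. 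Since $\det g_t=1$ (because $h$ is trace-free), the Euclidean volume form of $g_t$ is $dy$. The interior measure of $\widehat g_t$ is then $(4u_t^{4/(n-2)})^{n/2}dy$, and the boundary measure is $\omega_t(4u_t^{4/(n-2)})^{(n-1)/2}dy'$.

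First step: split $\mathcal G^R_t(\mathcal Du_t,u_t)$ into two pieces. The first is $\int(\mathcal Du_t)\Delta u_t+\int_{D_R}(\mathcal Du_t)\partial_nu_t$, which is Euclidean and $t$-dependent only through $z$. The second is the curvature piece $-\int(\mathcal Du_t)L_{g_t}u_t+\int(\mathcal Du_t)\omega_tB_{g_t}u_t$. For the Euclidean piece, apply the standard Pohozaev integration by parts. The identity $\mathcal D u\,\Delta u=\divg\bigl(\mathcal Du\nabla u-\tfrac12 y|\nabla u|^2\bigr)$ holds exactly because of the weight $(n-2)/2$. The $[t^2]$ coefficient of this piece then gives the outer flux $\int_{S_R^+}[(\mathcal Dz)\partial_\rho z-\frac R2|\nabla z|^2]$; the flat boundary contributes nothing because $y\cdot e_n=0$ there. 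Combining with the $n(n+2)U^{4/(n-2)}z^2$ term from the linearized equation for $z$ should produce $\mathcal F_R(z)$. One must check that the mixed terms in $U$ and $z$ cancel against the first-order equation \eqref{eq:yamabe-correction}.

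Second step, the curvature piece: write $\mathcal Du_t=\tfrac{n-2}{2}\lambda_t u_t$, where $\lambda_t=\frac{2}{n-2}\mathcal Du_t/u_t$ equals $\lambda\circ\Psi$ at $t=0$. Then $(\mathcal Du_t)u_t^{p}\,dy$ becomes, up to the constant $\frac{(n-2)^2}{2^{n+1}(n-1)}$ collected from $c_n$ and the powers of $4$, exactly $\lambda_t\,(R(\widehat g_t)-n(n-1))\,dv_{\widehat g_t}$ plus a term $n(n-1)\lambda_t\,dv_{\widehat g_t}$. The boundary part gives $\lambda_tH(\widehat g_t)\,d\sigma_{\widehat g_t}$. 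I would then expand in $t$. The deviation $\lambda_t-\lambda$ is $O(t)$, so it pairs with the first variations $\DR(\widehat h)=\DHg(\widehat h)=0$ and vanishes at order $t^2$. The $n(n-1)\lambda\,dv$ term must be absorbed by the Pohozaev terms $\int\mathcal Du_t\,u_t^p$, which are divergences and produce part of the outer flux.

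What remains is $[t^2]I_R(\widehat g_t)$, with the true second-order path $\widehat h^{(2)}$. Since $I_R$ is evaluated along a curve rather than the straight line, $[t^2]I_R(\widehat g_t)=\tfrac12D^2I_R[\widehat h,\widehat h]+DI_R(\gb)[\widehat h^{(2)}]$. The first term is $-Q_R(\widehat h)$. The second is computed by Lemma~\ref{lem:green} on $\Omega_R$ with $\varphi=\lambda$, where $\DRs\lambda=0$, $\partial_\nu\lambda=0$ and $\mathrm{II}_{\gb}=0$ on $\Gamma_R$. It therefore reduces to boundary terms on $S_R$ and on the corner $E_R$, the latter arising from integrating $\divg_\Sigma X$ on $\Gamma_R$; this is $\mathcal B_R$.

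The main obstacle I expect is bookkeeping of constants and the $n(n-1)\lambda$ and $\lambda_t$ terms, i.e. verifying that every non-curvature quadratic contribution collapses to $\mathcal F_R(z)$ and $\mathcal B_R$ with no leftover. For the final assertion, the smooth-extension hypotheses make $\widehat h$, $\widehat h^{(2)}$ smooth near $-e_{n+1}$. Then $S_R$ and $E_R$ shrink to that point, so $\mathcal B_R\to0$ (area $O(R^{-(n-1)})$ with bounded integrands) and $Q_R\to Q$. Also $z=O(|y|^{2-n})$ with $\nabla z=O(|y|^{1-n})$, so $\mathcal F_R(z)=O(R^{n-1}R^{2-2n})\to0$. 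Letting $R\to\infty$ gives \eqref{eq:intro-pohozaev}.
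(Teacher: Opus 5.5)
Your proposal is correct and is essentially the paper's proof. The paper likewise writes $\mathcal G_t^R(\mathcal Du_t,u_t)=\frac{n-2}{2^n(n-1)}I_{\varphi_t,R}(\widehat g_t)+\Phi_R(u_t)$, using conformal covariance with $\det g_t=1$ and the Pohozaev identity. It then replaces $\varphi_t$ by $\frac{n-2}{2}\lambda$ because both curvature defects start at order $t^2$, and evaluates $[t^2]I_R(\widehat g_t)=-Q_R(\widehat h)+DI_R(\gb)[\widehat h^{(2)}]$, with the last term equal to $\mathcal B_R$ by Green's formula.

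There are two minor corrections. The term $\frac R2n(n+2)U^{4/(n-2)}z^2$ in $\mathcal F_R(z)$ is the $t^2$ coefficient of the divergence $n(n-2)\int_{B_R^+}(\mathcal Du_t)u_t^{(n+2)/(n-2)}\dd y=\frac{(n-2)^2R}{2}\int_{S_R^+}u_t^{2n/(n-2)}\dd S$, which is the mechanism you name in your second step; it does not come from the linearized equation for $z$. Also, the decay is $\mathcal F_R(z)=O(R^{2-n})$ rather than $O(R^{1-n})$, which still suffices.
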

\begin{proof}
For a weight $\varphi$, let $I_{\varphi,R}$ be $I_R$ with $\lambda$ replaced by $\varphi$,
and put
\[
\Phi_R(u)=\int_{S_R^+}\left[
(\mathcal Du)\partial_\rho u-\frac R2|\nabla u|^2
+\frac{R(n-2)^2}{2}u^{2n/(n-2)}\right]\dd S.
\]
Since $\det g_t=1$, conformal covariance gives, with
$\varphi_t=\mathcal Du_t/u_t$,
\begin{align*}
\frac{n-2}{2^n(n-1)}I_{\varphi_t,R}(\widehat g_t)
={}&\int_{B_R^+}(\mathcal Du_t)
\left(-L_{g_t}u_t-n(n-2)u_t^{(n+2)/(n-2)}\right)\dd y\\
&+\int_{D_R}(\mathcal Du_t)B_{g_t}u_t\,\omega_t\dd y'.
\end{align*}
Euclidean integration by parts gives
\[
\int_{B_R^+}(\mathcal Du)
\left(-\Delta u-n(n-2)u^{(n+2)/(n-2)}\right)\dd y
-\int_{D_R}(\mathcal Du)\partial_nu\dd y'=-\Phi_R(u).
\]
Subtracting yields
\[
\mathcal G_t^R(\mathcal Du_t,u_t)
=\frac{n-2}{2^n(n-1)}I_{\varphi_t,R}(\widehat g_t)+\Phi_R(u_t).
\]
Both curvature defects start at order $t^2$ by
\eqref{eq:yamabe-correction}, so replacing $\varphi_t$ by
$\varphi_0=\tfrac{n-2}{2}\lambda$ leaves the quadratic coefficient unchanged.
Moreover,
\[
[t^2]I_R(\widehat g_t)
=-Q_R(\widehat h)+DI_R(\gb)[\widehat h^{(2)}],\qquad
[t^2]\Phi_R(U+tz)=\mathcal F_R(z).
\]
Integration by parts gives $DI_R(\gb)[\widehat h^{(2)}]=\mathcal B_R$.
The face $\Gamma_R$ contributes only the integral over $E_R$ in
\eqref{eq:outer-curvature-flux}, since
$\partial_{\nu_\Gamma}\lambda=\mathrm{II}_{\gb}=0$ there.
Taking the coefficient of $t^2$ proves \eqref{eq:finite-pohozaev}.

Under the final hypotheses, $\widehat h$ and $\widehat h^{(2)}$ extend smoothly
across $-e_{n+1}$. The spherical areas of $S_R$ and $E_R$ are of orders
$R^{1-n}$ and $R^{2-n}$, respectively. Since the linearized curvature
defects vanish, $Q_R(\widehat h)$ is \eqref{eq:common-hessian} restricted
to $\Omega_R$ and $\Gamma_R$. Hence
\[
Q_R(\widehat h)\longrightarrow Q(\widehat h),\qquad
\mathcal B_R=O(R^{2-n})\longrightarrow0.
\]
Smoothness of $z/U$ gives $z=O(\rho^{2-n})$ and
$\nabla z=O(\rho^{1-n})$, so
$\mathcal F_R(z)=O(R^{2-n})\to0$ as well.
Taking $R\to\infty$ proves \eqref{eq:intro-pohozaev}.
\end{proof}


\begin{thebibliography}{99}
\raggedright

\bibitem{adn}
S.~Agmon, A.~Douglis, and L.~Nirenberg,
\emph{Estimates near the boundary for solutions of elliptic partial
differential equations satisfying general boundary conditions. I},
Comm. Pure Appl. Math. \textbf{12} (1959), no.~4, 623--727.

\bibitem{bce}
E.~Barbosa, M.~P.~Cavalcante, and J.~M.~Espinar,
\emph{Min-Oo conjecture for fully nonlinear conformally invariant equations},
Comm. Pure Appl. Math. \textbf{72} (2019), no.~11, 2259--2281.

\bibitem{bm}
S.~Brendle and F.~C.~Marques,
\emph{Scalar curvature rigidity of geodesic balls in $\Sph^n$},
J. Differential Geom. \textbf{88} (2011), no.~3, 379--394.

\bibitem{bmn}
S.~Brendle, F.~C.~Marques, and A.~Neves,
\emph{Deformations of the hemisphere that increase scalar curvature},
Invent. Math. \textbf{185} (2011), no.~1, 175--197.

\bibitem{corderomisteli}
S.~Cordero-Misteli,
\emph{Compactness and noncompactness for the positive Yamabe--Escobar
problem with minimal boundary},
arXiv:2610.08759v2 (2026).

\bibitem{cox}
G.~Cox,
\emph{Scalar curvature rigidity theorems for the upper hemisphere},
Ph.D. thesis, Duke University, 2011.

\bibitem{cmt}
G.~Cox, P.~Miao, and L.-F.~Tam,
\emph{Remarks on a scalar curvature rigidity theorem of Brendle and Marques},
Asian J. Math. \textbf{17} (2013), no.~3, 457--470.

\bibitem{ge}
J.~Ge,
\emph{Problems related to isoparametric theory},
in \emph{Surveys in Geometric Analysis 2019},
Science Press, Beijing, 2020, 71--85.

\bibitem{gkmwblowup}
L.~Gong, S.~Kim, M.~Musso, and J.~Wei,
\emph{On critical dimensions for compactness in the boundary Yamabe problem, I},
arXiv:2609.39945 (2026).

\bibitem{gkwq}
L.~Gong, S.~Kim, and J.~Wei,
\emph{Compactness and non-compactness theorems of the fourth- and
sixth-order constant $Q$-curvature problems},
arXiv:2502.14237v3 (2025).

\bibitem{hangwang}
F.~Hang and X.~Wang,
\emph{Rigidity and non-rigidity results on the sphere},
Comm. Anal. Geom. \textbf{14} (2006), no.~1, 91--106.

\bibitem{hohuang}
P.~T.~Ho and Y.-C.~Huang,
\emph{Deformation of the scalar curvature and the mean curvature},
arXiv:2008.11893 (2020).

\bibitem{hls}
Y.~Hu, P.~Liu, and Y.~Shi,
\emph{Rigidity of 3D spherical caps via $\mu$-bubbles},
Pacific J. Math. \textbf{323} (2023), no.~1, 89--114.

\bibitem{kms}
M.~A.~Khuri, F.~C.~Marques, and R.~M.~Schoen,
\emph{A compactness theorem for the Yamabe problem},
J. Differential Geom. \textbf{81} (2009), no.~1, 143--196.

\bibitem{kim}
S.~Kim,
\emph{Conformal scalar curvature rigidity on Riemannian manifolds},
arXiv:1706.00460 (2017).

\bibitem{leetam}
M.-C.~Lee and L.-F.~Tam,
\emph{Rigidity of Lipschitz map using harmonic map heat flow},
Amer. J. Math. \textbf{148} (2026), no.~4, 985--1002.

\bibitem{miaotam}
P.~Miao and L.-F.~Tam,
\emph{Scalar curvature rigidity with a volume constraint},
Comm. Anal. Geom. \textbf{20} (2012), no.~1, 1--30.

\bibitem{minoo}
M.~Min-Oo,
\emph{Scalar curvature rigidity of certain symmetric spaces},
in \emph{Geometry, Topology, and Dynamics} (Montreal, PQ, 1995),
CRM Proc. Lecture Notes, vol.~15, Amer. Math. Soc., Providence, RI, 1998,
127--136.

\bibitem{qingyuan}
J.~Qing and W.~Yuan,
\emph{On scalar curvature rigidity of vacuum static spaces},
Math. Ann. \textbf{365} (2016), no.~3--4, 1257--1277.

\bibitem{shengdeform}
H.~Sheng,
\emph{Localized deformation of the scalar curvature and the mean curvature},
arXiv:2402.08619 (2024).

\bibitem{shengstatic}
H.~Sheng,
\emph{Static manifolds with boundary and rigidity of scalar curvature and mean curvature},
Int. Math. Res. Not. IMRN (2025), no.~7, Paper No.~rnaf086.

\bibitem{spiegel}
F.~M.~Spiegel,
\emph{Scalar curvature rigidity for locally conformally flat manifolds with boundary},
arXiv:1511.06270v2 (2016).

\bibitem{sweeneyminoo}
P.~Sweeney Jr.,
\emph{New counterexamples to Min-Oo's conjecture via tunnels},
Proc. Amer. Math. Soc. \textbf{153} (2025), no.~4, 1771--1786.

\bibitem{yuan}
W.~Yuan,
\emph{Brown--York mass and compactly supported conformal deformations of scalar curvature},
J. Geom. Anal. \textbf{27} (2017), no.~1, 797--816.

\bibitem{zhangzhao}
Y.~Zhang and B.~Zhao,
\emph{Scalar curvature rigidity of small spherical caps with arbitrary fill-ins},
arXiv:2610.18413v1 (2026).

\end{thebibliography}
\end{document}